\newtheorem{theorem}{Theorem}[section]
\newtheorem{proposition}[theorem]{Proposition}
\newtheorem{lemma}[theorem]{Lemma}
\newtheorem{corollary}[theorem]{Corollary}
\theoremstyle{definition}
\newtheorem{definition}[theorem]{Definition}
\newtheorem{example}[theorem]{Example}
\newtheorem{conjecture}[theorem]{Conjecture}
\newtheorem{remark}[theorem]{Remark}
\newcommand{\FF}{ \ensuremath{\mathbb{F}}}
\newcommand{\NN}{ \ensuremath{\mathbb{N}}}
\newcommand{\C}{\mathcal{C}}
\newcommand{\ST}{\mathcal{ST}}
\newcommand{\Lex}{\ensuremath{\mathrm{Lex}}}
\newcommand{\Skel}{\ensuremath{\mathrm{Skel}}}
\newcommand{\lex}{{\mathrm{lex}}}
\newcommand{\sqlex}{{\mathrm{sqlex}}}
\newcommand{\Mon}{{\mathrm{{Mon}}}}
\newcommand{\Tor}{\ensuremath{\mathrm{Tor}}\hspace{1pt}}
\def\moverlay{\mathpalette\mov@rlay}
\def\mov@rlay#1#2{\leavevmode\vtop{%
   \baselineskip\z@skip \lineskiplimit-\maxdimen
   \ialign{\hfil$\m@th#1##$\hfil\cr#2\crcr}}}
\newcommand{\charfusion}[3][\mathord]{
    #1{\ifx#1\mathop\vphantom{#2}\fi
        \mathpalette\mov@rlay{#2\cr#3}
      }
    \ifx#1\mathop\expandafter\displaylimits\fi}
\newcommand{\bigcupdot}{\charfusion[\mathop]{\bigcup}{\cdot}}
\newcommand{\lk}{{\mathrm{lk}}}
\numberwithin{equation}{section}
\begin{document}
\title{Graded Betti numbers of balanced simplicial complexes}

\author[M. Juhnke-Kubitzke]{Martina Juhnke-Kubitzke}
\email{juhnke-kubitzke@uni-osnabrueck.de}
\author[L. Venturello]{Lorenzo Venturello}
\email{lorenzo.venturello@uni-osnabrueck.de}
\address{
Universit\"{a}t Osnabr\"{u}ck,
Fakult\"{a}t f\"{u}r Mathematik,
Albrechtstra\ss e 28a,
49076 Osnabr\"{u}ck, GERMANY
}

\date{\today}

\thanks{
	Both authors were supported by the German Research Council DFG GRK-1916.
}
\keywords{simplicial complex, Stanley-Reisner ring, balanced, graded Betti numbers, lex (plus powers) ideals}
\subjclass[2010]{05E40, 05E45, 13F55}

\begin{abstract}
We prove upper bounds for the graded Betti numbers of Stanley-Reisner rings of balanced simplicial complexes. Along the way we show bounds for Cohen-Macaulay graded rings $S/I$, where $S$ is a polynomial ring and $I\subseteq S$ is an homogeneous ideal containing a certain number of generators in degree 2, including the squares of the variables. Using similar techniques we provide upper bounds for the number of linear syzygies for Stanley-Reisner of balanced normal pseudomanifolds. Moreover, we compute explicitly the graded Betti numbers of cross-polytopal stacked spheres, and show that they only depend on the dimension and the number of vertices, rather than also the combinatorial type.
\end{abstract}

\maketitle
\section{Introduction}
In the last decades tremendous connections between combinatorics, topology and commutative algebra have been established. The theory of Cohen-Macaulay rings led to the proof of celebrated conjectures such as the upper bound theorem for spheres and the $g$-theorem for simplicial polytopes (see \cite{Stanley-greenBook} as a comprehensive reference). Since these results rely on algebraic properties of the Stanley-Reisner ring of simplicial complex, it is natural to investigate classical invariants of this ring, such as its minimal graded free resolution as a module over the polynomial ring. Our starting point are mainly two papers: In \cite{MN}, Migliore and Nagel showed upper bounds for the graded Betti numbers of simplicial polytopes. More recently, building on those results, Murai \cite{MUR} established a connection between a specific property of a triangulation, so-called \emph{tightness} and the graded Betti numbers of its Stanley-Reisner ring. Moreover, he employs upper bounds for graded Betti numbers to obtain a lower bound for the minimum number of vertices needed to triangulate a pseudomanifold with a given first (topological) Betti number. It is conceivable that for more specific classes of simplicial complexes, better bounds (for the graded Betti numbers) hold, which then can be turned again into lower bounds for the minimal number of vertices of such a simplicial complex. This serves as the motivation for this article, where we will focus on so-called \emph{balanced} simplicial complexes. \\
Those were originally introduced by Stanley \cite{St79} under the name \emph{completely balanced} as pure $(d-1)$-dimensional simplicial complexes whose vertex sets can be partitioned into $d$ classes, such that each class meets every face in at most (and hence exactly) one element. Following more recent papers, we will drop the word ``completely''  and we will not require balanced complexes to be pure. Notable examples are Coxeter complexes, Tits buildings as well as the order complex of a graded poset, with the vertex set partition given by the rank function. This last observation shows that the barycentric subdivision of any simplicial complex is balanced, which gives a constructive way of obtaining balanced triangulations of any topological space and shows that balancedness is a combinatorial rather than a topological property. In recent years, balanced simplicial complexes have been studied intensively and many classical results in face enumeration have been proven to possess a balanced analog (see e.g., \cite{GKN,JKM,KN,Juhnke:Murai:Novik:Sawaske}). 

The aim of this article it to continue with this line of research by studying graded Betti numbers of balanced simplicial complexes. Our main results establish upper bounds for different cases, including arbitrary balanced simplicial complexes, balanced Cohen-Macaulay complexes and balanced normal pseudomanifolds. Along the way, we derive upper bounds on the graded Betti numbers of homogeneous ideals with a high concentration of generators in degree $2$. 

The structure of this paper is the following:
\begin{itemize}
	\item \Cref{preliminaries} is devoted to the basic notions and definitions.
	\item In \Cref{balsimp} we use Hochster's formula to prove a first upper bound for the graded Betti numbers of an arbitrary balanced simplicial complexes (see \Cref{skeldpartite}).
	\item We next restrict ourselves to the Cohen-Macaulay case, and provide two different upper bounds in this setting. The first approach provides a bound for graded Betti numbers of ideals with a high concentration of generators in degree $2$, which immediately specializes to Stanley-Reisner ideals of balanced Cohen-Macaulay complexes (see \Cref{thm:BettiCM}). This is the content of \Cref{sect:FirstBound}.
	\item The second approach, presented is \Cref{sect:SecondBound}, employs the theory of \emph{lex-plus-squares} ideals to bound the Betti numbers of ideals containing many generators in degree $2$, including the squares of the variables. Again the result on balanced complexes given in \Cref{thm:BoundsBettiLexPlusSquares} follows as an immediate application.
	\item In \Cref{section:pseudo}, we focus on balanced normal pseudomanifolds. We use a result by Fogelsanger \cite{Fogelsanger} to derive upper bounds for the graded Betti numbers in the first strand of the graded minimal free resolution in this setting (see \Cref{thm:pseudomanifold}).
	\item In \cite{KN} \emph{cross-polytopal stacked} spheres were introduced as the balanced  analog of stacked spheres, in the sense that they minimize the face vector among all balanced spheres with a given number of vertices. In \Cref{sectionstacked} (\Cref{thm:BettiCross}) we compute the graded Betti numbers of those spheres, and show that they only depend on the number of vertices and on the dimension. The same behavior in known to occur for stacked spheres \cite{TH}. Moreover, we conjecture that the graded Betti numbers in the linear strand of their resolution provide upper bounds for the ones of any balanced normal pseudomanifold.
\end{itemize} 
As a service to the reader, in particular to help him compare the different bounds, we use the same example to illustrate the predicted upper bounds: Namely, the toy example is a $3$-dimensional balanced simplicial complex on $12$ vertices with each color class being of cardinality $3$. All computations and experiments have been carried out with the help of the computer algebra system Macaulay2 \cite{M2}.

\section{Preliminaries} \label{preliminaries}
\subsection{Algebraic background}
Let $S=\FF[x_1,\dots,x_n]$ denote the polynomial ring in $n$ variables over an arbitrary field $\FF$ and let $\mathfrak{m}$ be  its maximal homogeneous ideal, i.e., $\mathfrak{m}=(x_1,\ldots,x_n)$. Denote with $\Mon_i(S)$ the set of monomials of degree $i$ in $S$, and for $u\in\Mon_i(S)$ and a term order $<$, we let $\Mon_i(S)_{<u}$ be the set of monomials of degree $i$ that are smaller than $u$ with respect to $<$. For a graded $S$-module $R$ we use $R_i$ to denote its graded component of degree $i$ (including $0$), where we use the standard $\NN$-grading of $S$. The \emph{Hilbert function} of a quotient $S/I$, where $I\subseteq S$ is a homogeneous ideal is the function from $\NN\to \NN$ that maps $i$ to $ \dim_{\mathbb{F}}(R_i)$. A finer invariant can be obtained from the minimal graded free resolution of $S/I$. The \emph{graded Betti number} $\beta_{i,i+j}^S(S/I)$ is the non-negative integer 
	$$\beta^{S}_{i,i+j}(R):=\dim_{\mathbb{F}} \Tor_{i}^{S}(R,\mathbb{F})_{i+j}.$$ 
We often omit the superscript $S$, when the coefficient ring is clear from the context. 
 We refer to any commutative algebra book (e.g., \cite{BH-book}) for further properties of the graded minimal free resolution of $S/I$. 
\begin{definition}
Let $I\subseteq S$ be a homogeneous ideal and let $R=S/I$ be of Krull dimension $d$. 
	Let $\Theta=\{\theta_1,\dots,\theta_d\}\subseteq S_1$. Then:
	\begin{itemize}
		\item[(i)] $\Theta$ is a \emph{linear system of parameters} (\emph{l.s.o.p.} for short) for $R$ if $\dim(R/(\theta_1,\dots,\theta_i)R)=\dim(R)-i$, for all $1\leq i\leq d$.
		\item[(ii)] $\Theta$ is a \emph{regular sequence} for $R$ if $\theta_{i}$ is not a zero divisor of $\dim(R/(\theta_1,\dots,\theta_{i-1})R)$, for all $1\leq i\leq d$.
	\end{itemize}
\end{definition} 
We remark that due to the Noether normalization lemma, an l.s.o.p. for $R=S/I$ always exists, if $\FF$ is an infinite field. Moreover, if $\Theta$ is a regular sequence, then $\Theta$ is an l.s.o.p., but the converse is far from being true in general. The class of rings for which the converse holds is of particular interest.
\begin{definition}
	A graded ring $R$ is \emph{Cohen-Macaulay} if every l.s.o.p. is a regular sequence for $R$.
\end{definition}
The theory of Cohen-Macaulay rings plays a key role in combinatorial commutative algebra and its importance cannot be overstated (see e.g., \cite{BH-book,Stanley-greenBook}).

The next two statements will be useful for providing upper bounds for graded Betti numbers.
\begin{lemma}\label{technicalbetti}
	Let $R=S/I$ with $I$ an homogeneous ideal and $\theta\in S_1$. 
	\begin{itemize}
		\item[(i)] \cite[Corollary 8.5]{MN} If the multiplication map $\times\theta:\; R_k\longrightarrow R_{k+1}$ is injective for every $k\leq j$, then
		$$\beta^{S}_{i,i+k}(R)\leq \beta^{S/\theta S}_{i,i+k}(R/\theta R),$$
		for every $i\geq 0$ and $k\leq j$.
		\item[(ii)] \cite[Proposition 1.1.5]{BH-book} If $\theta$ is not a zero divisor of $M$, then 
		$$\beta^{S}_{i,i+j}(R)= \beta^{S/\theta S}_{i,i+j}(R/\theta R),$$
		for every $i,j\geq 0$.
	\end{itemize}
\end{lemma}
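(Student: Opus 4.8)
The two statements are standard change-of-rings facts; here is how I would organize the arguments. Write $\bar S := S/\theta S$ and let $F_\bullet \to R$ be the minimal graded free $S$-resolution of $R$, so that $\beta^S_{i,i+k}(R) = \dim_\FF (F_i)_{i+k}$. The common first step is to tensor over $S$ with $\bar S$, obtaining a complex $\bar F_\bullet := F_\bullet \otimes_S \bar S$ of free $\bar S$-modules whose $i$-th term has the same graded rank as $F_i$, whose differentials still have all entries in the maximal graded ideal of $\bar S$ (so $\bar F_\bullet$ is a minimal complex), and whose $i$-th homology is $\Tor_i^S(R,\bar S)$. Using the length-one free $S$-resolution $0 \to S(-1) \xrightarrow{\ \theta\ } S \to \bar S \to 0$ (exact since $\theta \neq 0$ and $S$ is a domain) one reads off $\Tor_0^S(R,\bar S) = R/\theta R$, $\Tor_1^S(R,\bar S) = \Ker\big(\theta : R(-1) \to R\big)$, and $\Tor_i^S(R,\bar S) = 0$ for $i \ge 2$.

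For part (ii): since $\theta$ is a nonzerodivisor on $R$, the module $\Ker(\theta : R(-1) \to R)$ vanishes, so $H_i(\bar F_\bullet) = 0$ for $i \ge 1$ and $\bar F_\bullet \to R/\theta R$ is a minimal graded free $\bar S$-resolution. Comparing graded ranks term by term then gives $\beta^{\bar S}_{i,i+j}(R/\theta R) = \dim_\FF(\bar F_i)_{i+j} = \dim_\FF(F_i)_{i+j} = \beta^S_{i,i+j}(R)$ for all $i,j$.

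For part (i) the module $T := \Ker(\theta : R(-1) \to R)$ need not vanish, but the injectivity of $\times\theta : R_k \to R_{k+1}$ for $k \le j$ forces $T_m = \Ker(R_{m-1} \xrightarrow{\theta} R_m) = 0$ for all $m \le j+1$, i.e. $T$ is concentrated in internal degrees $\ge j+2$. I would then feed the surjection $S \twoheadrightarrow \bar S$ and the $\bar S$-module $\FF$ into the Cartan--Eilenberg change-of-rings spectral sequence
\[
E^2_{p,q} \;=\; \Tor_p^{\bar S}\!\big(\Tor_q^S(R,\bar S),\,\FF\big) \;\Longrightarrow\; \Tor_{p+q}^S(R,\FF),
\]
which, since $\Tor_q^S(R,\bar S) = 0$ for $q \ge 2$, has only the two columns $q = 0,1$ and hence degenerates to a long exact sequence. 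It follows that each $\Tor_n^S(R,\FF)$ carries a two-step filtration whose graded quotients are subquotients of $\Tor_n^{\bar S}(R/\theta R,\FF)$ and of $\Tor_{n-1}^{\bar S}(T,\FF)$. As $T$ sits in degrees $\ge j+2$, minimality of its $\bar S$-resolution places $\Tor_{n-1}^{\bar S}(T,\FF)$ in internal degrees $\ge (j+2)+(n-1) = n+j+1$. Hence in every internal degree $n+k$ with $k \le j$ the second graded quotient is zero, so $\Tor_n^S(R,\FF)_{n+k}$ is a subquotient of $\Tor_n^{\bar S}(R/\theta R,\FF)_{n+k}$, and taking $\FF$-dimensions yields $\beta^S_{n,n+k}(R) \le \beta^{\bar S}_{n,n+k}(R/\theta R)$ for all $n \ge 0$ and $k \le j$, as wanted.

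The one point that requires care in part (i) is the internal-degree bookkeeping: one must verify that the error term $\Tor_1^S(R,\bar S)$ --- equivalently, the failure of $\bar F_\bullet$ to be acyclic --- is pushed far enough up in the grading that it cannot affect the strands $k \le j$. If one wishes to avoid spectral sequences, the same estimate can be obtained by splitting $\bar F_\bullet$ as its minimal part plus a direct sum of trivial two-term complexes and comparing the minimal part, degree by degree, with the minimal $\bar S$-resolution of $R/\theta R$ through a lifted comparison map and its mapping cone; this is the substance of \cite[Corollary 8.5]{MN}, while part (ii) is \cite[Proposition 1.1.5]{BH-book}.
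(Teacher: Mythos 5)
Your proof is correct. The paper does not actually prove this lemma --- it simply cites \cite[Corollary 8.5]{MN} for (i) and \cite[Proposition 1.1.5]{BH-book} for (ii) --- so there is no in-paper argument to compare against. Your treatment of (ii) is the standard one: tensoring the minimal $S$-resolution with $\bar S = S/\theta S$ kills nothing since $\Tor^S_{\ge 1}(R,\bar S)=0$, and minimality is preserved. For (i), your use of the change-of-rings spectral sequence $E^2_{p,q}=\Tor_p^{\bar S}(\Tor_q^S(R,\bar S),\FF)\Rightarrow\Tor_{p+q}^S(R,\FF)$ is a clean and correct route: you correctly identify that the hypothesis places $T=\Tor_1^S(R,\bar S)$ in internal degrees $\ge j+2$, so $\Tor_{n-1}^{\bar S}(T,\FF)$ lives in degrees $\ge n+j+1$ and cannot contaminate the strands $k\le j$. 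This is arguably slicker than the mapping-cone comparison one would extract from \cite[Corollary~8.5]{MN}, which you also sketch at the end; both buy the same conclusion, and the degree bookkeeping --- the only real content --- is handled correctly.
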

From \Cref{technicalbetti} it immediately follows that modding out by a regular sequence does not affect the graded Betti numbers.

\subsection{Lex ideals}
In order to show upper bounds for the graded Betti numbers we will make use of lexicographic ideals. As above, we let  $S=\FF[x_1,\dots,x_n]$. Given a monomial ideal $I\subseteq S$ we denote by $G(I)$ its unique set of minimal monomial generators and we use $G(I)_j$ to denote those monomials in $G(I)$ of degree $j$. Let $>_{\lex}$ be the \emph{lexicographic order} on $S$ with $x_1>_{\lex}\dots,>_{\lex}x_n$. I.e., we have $x_1^{a_1}x_2^{a_2}\cdots x_n^{a_n}>_{\lex}x_1^{b_1}x_2^{b_2}\cdots x_n^{b_n}$ if the leftmost non-zero entry of $(a_1-b_1,\ldots,a_n-b_n)$ is positive. A monomial ideal $L\subseteq S$ is called a \emph{lexicographic ideal} (or \emph{lex ideal} for short) if for any monomials $u\in L$ and $v\in S$ of the same degree, with $v>_{\lex} u$ it follows that $v\in L$. Macaulay \cite{Macaulay} showed that for any graded homogeneous ideal $I\subseteq S$ there exists a unique lex ideal, denoted with $I^{\lex}$, such that $S/I$ and $S/I^{\lex}$ have the same Hilbert function. In particular, the $\mathbb{F}$-vector space $I^{\lex}\cap S_i$ is spanned by the first $\dim_{\mathbb{F}}S_i-\dim_{\mathbb{F}}(S/I)_i$ largest monomials of degree $i$ in $S$. Note that the correspondence between $I$ and $I^{\lex}$ is far from being one to one, since $I^{\lex}$ only depends on the Hilbert function of $I$. We conclude this section with two fundamental results on the graded Betti numbers of lex ideals.
\begin{lemma}[Bigatti,\cite{Bigatti}, Hulett,\cite{Hulett}, Pardue \cite{Pardue}]\label{lemma:Bigatti}
	For any homogeneous ideal $I\subseteq S$ it holds that
	$$\beta_{i,i+j}^S(S/I)\leq\beta_{i,i+j}^{S}(S/I^{\lex}),$$
	for all $i,j\geq 0$.
\end{lemma}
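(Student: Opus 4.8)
This is the classical theorem of Bigatti, Hulett and Pardue, and the plan is to prove it in three moves: first reduce from an arbitrary homogeneous ideal to a strongly stable monomial ideal via generic initial ideals, then make the Betti numbers explicitly computable through the Eliahou--Kervaire resolution, and finally carry out a purely combinatorial comparison showing that, among all strongly stable ideals with the Hilbert function of $I$, the lex ideal $I^{\lex}$ realises the largest graded Betti numbers in every homological degree.

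For the first move, I would pass to the generic initial ideal $J=\gin_{\rlex}(I)$ with respect to the reverse lexicographic order, taken after a generic linear change of coordinates. Since $J$ is an initial ideal of (a generic coordinate image of) $I$, it is a flat degeneration, and taking an initial ideal can only increase graded Betti numbers; hence $\beta_{i,i+j}^S(S/I)\le\beta_{i,i+j}^S(S/J)$ for all $i,j$. A generic coordinate change does not alter the Hilbert function, so $S/J$, $S/I$ and $S/I^{\lex}$ all have the same Hilbert function. In characteristic zero, $J$ is Borel-fixed and therefore strongly stable: whenever $x_k m\in J$ and $\ell<k$, also $x_\ell m\in J$. (In positive characteristic $J$ is merely Borel-fixed, which is the source of the extra work mentioned below.)

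For the second and third moves I would invoke the Eliahou--Kervaire resolution: for a strongly stable ideal $J$ with minimal monomial generators $G(J)$, writing $m(u)=\max\{k:x_k\mid u\}$, one has
$$\beta_{i,i+j}^S(S/J)=\sum_{u\in G(J),\ \deg u=j}\binom{m(u)-1}{i-1}.$$
Because lex ideals are strongly stable, the same formula applies to $I^{\lex}$, so the theorem reduces to the combinatorial assertion that if $J$ and $J'=I^{\lex}$ are strongly stable with the same Hilbert function then $\sum_{u\in G(J)_j}\binom{m(u)-1}{i-1}\le\sum_{u\in G(J')_j}\binom{m(u)-1}{i-1}$ for all $i,j$. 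I would prove this by induction on the number $n$ of variables, equivalently by a compression argument that replaces $J$ in stages by ideals that are lexicographic with respect to larger and larger initial segments of the variables: decomposing a strongly stable ideal according to the power of $x_n$ it contains lets one track how the multiset $\{m(u):u\in G_j\}$ changes under such a move, and convexity of $k\mapsto\binom{k}{i-1}$ shows that the lex ideal, which among all strongly stable ideals with the prescribed Hilbert function makes this multiset as large as possible in the relevant majorisation order, maximises each Betti number.

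The main obstacle is precisely this last combinatorial comparison: the bookkeeping of minimal generators of a strongly stable ideal and of their invariants $m(u)$ under the Hilbert-function constraint, together with the verification that the lex ideal is extremal, is delicate and constitutes the bulk of Bigatti's and Hulett's arguments. A secondary obstacle is the case of positive characteristic, where $\gin(I)$ need not be strongly stable and the Eliahou--Kervaire formula is unavailable; here one follows Pardue, using polarisation to reduce to a squarefree situation (or, alternatively, constructing an explicit chain of monomial ideals interpolating between $I$ and $I^{\lex}$ along which the graded Betti numbers are non-decreasing), which removes the characteristic hypothesis at the price of a more involved deformation argument.
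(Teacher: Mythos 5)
The paper does not prove this lemma: it is cited directly from Bigatti, Hulett and Pardue (and the relevant Eliahou--Kervaire formula is recalled separately as \Cref{lemma:eliker}), so there is no in-paper proof to compare against. Your sketch is a faithful outline of the standard route taken in those sources -- reduce to a Borel-fixed/strongly stable ideal via generic initial ideals, compute Betti numbers by Eliahou--Kervaire, and then do the combinatorial majorisation argument, with Pardue's polarisation handling positive characteristic. One small slip: in your Eliahou--Kervaire display the sum should run over $u\in G(J)$ with $\deg u = j+1$, not $\deg u = j$ (compare the paper's \Cref{lemma:eliker}); since $\Tor_i^S(S/J,\FF)\cong\Tor_{i-1}^S(J,\FF)$, the generators contributing to $\beta_{i,i+j}(S/J)$ are those of degree $j+1$.
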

\Cref{lemma:Bigatti} states that among all graded rings with the same Hilbert functions, the quotient with respect to the lex ideal maximizes all graded Betti number simultaneously. Another peculiar property of lex ideals is that their graded Betti numbers are determined just by the combinatorics of their minimal generating set $G(I^{\lex})$.  
For a monomial $u\in S$ denote with $\max(u)=\max\left\lbrace i~:~ x_i|u \right\rbrace$. 
\begin{lemma}[Eliahou-Kervaire,\cite{EK}\label{lemma:eliker}]
	Let $I^{\lex}\subseteq S$ be a lexicographic ideal. Then
	$$\beta_{i,i+j}^S(S/I^{\lex})=\displaystyle\sum_{u\in G(I^{\lex})\cap S_{j+1}}\binom{\max(u)-1}{i-1},$$
	for all $i\geq 1$, $j\geq 0$.
\end{lemma}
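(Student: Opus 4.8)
The plan is to recognize this as the Eliahou--Kervaire theorem and to prove it through the theory of stable monomial ideals. The first point is that a lex ideal $L\subseteq S$ is \emph{strongly stable}: if $u\in L$ is a monomial and $1\le k<\max(u)$, then $v:=x_k u/x_{\max(u)}$ has $\deg v=\deg u$ and $v>_{\lex}u$, since the exponent vectors of $v$ and $u$ first differ in position $k$, where $v$ is larger; as $L$ is a lex ideal, $v\in L$. The structural feature of stability that I would exploit is the \emph{Eliahou--Kervaire decomposition}: every monomial $w\in L$ factors uniquely as $w=g\,m$ with $g\in G(L)$ and $\min(m)\ge\max(g)$ (with the convention $\min(1)=+\infty$); here $g$ is produced by a greedy procedure and uniqueness follows by an easy induction on $\deg w$.

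Next I would produce a minimal graded free resolution of $S/L$ by the method of \emph{linear quotients} together with iterated mapping cones. Order $G(L)=u_1,\dots,u_m$ by nondecreasing degree, and within a fixed degree by decreasing $>_{\lex}$. The central claim is that for every $s$,
\[
 (u_1,\dots,u_{s-1}):(u_s)=(x_1,\dots,x_{\max(u_s)-1}).
\]
The inclusion ``$\supseteq$'' is immediate from stability: for $k<\max(u_s)$ the monomial $x_k u_s/x_{\max(u_s)}$ lies in $L$, hence is divisible by some $u_t$; a degree/lex comparison shows $t<s$, and $u_t\mid x_k u_s$. For ``$\subseteq$'' one must show that whenever a monomial $w$ has $\min(w)\ge\max(u_s)$ (the case $w=1$ being immediate from minimality of $G(L)$), no $u_t$ with $t<s$ divides $w u_s$; this is precisely where the Eliahou--Kervaire decomposition enters, because $w u_s=u_s\cdot w$ is already the canonical decomposition of $w u_s$, and one checks, using stability, that any earlier minimal generator dividing $w u_s$ would contradict the greedy construction of the canonical factor.

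Granting the linear quotients, the short exact sequences
\[
 0\longrightarrow \bigl(S/(x_1,\dots,x_{\max(u_s)-1})\bigr)(-\deg u_s)\xrightarrow{\ \cdot u_s\ } S/(u_1,\dots,u_{s-1})\longrightarrow S/(u_1,\dots,u_s)\longrightarrow 0
\]
let me build a resolution of $S/L$ as an iterated mapping cone of the Koszul complexes $K_\bullet(x_1,\dots,x_{\max(u_s)-1};S)(-\deg u_s)$. Because the generators are ordered by nondecreasing degree, every chain lift of a map $\cdot u_s$ to the previously constructed (inductively minimal) resolution is homogeneous of positive degree, so all entries of the cone's differential lie in $\mathfrak{m}$ and the cone is already minimal. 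Reading off its ranks: in homological degree $i\ge 1$ the contribution of $u_s$ is a free summand of rank $\binom{\max(u_s)-1}{i-1}$ with generators in internal degree $(i-1)+\deg u_s$. Hence
\[
 \beta^S_{i,i+j}(S/L)=\#\{(u_s,\sigma)\ :\ \deg u_s=j+1,\ \sigma\subseteq\{1,\dots,\max(u_s)-1\},\ |\sigma|=i-1\}=\sum_{u\in G(L)\cap S_{j+1}}\binom{\max(u)-1}{i-1},
\]
which is the claim.

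The main obstacle is the exactness of this resolution, which concretely is the inclusion ``$\subseteq$'' in the colon computation above: this is the one step that is not bookkeeping, and it is exactly where the uniqueness and greedy structure of the Eliahou--Kervaire decomposition of monomials in a stable ideal is indispensable. (Alternatively, one may instead write the Eliahou--Kervaire complex directly on the symbols $e(u;\sigma)$ and verify $\partial^2=0$, acyclicity, and minimality by hand; the acyclicity step is again the nontrivial ingredient and rests on the same decomposition, while passing from lex to strongly stable and the final count with binomial coefficients are routine.)
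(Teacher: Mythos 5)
The paper does not prove this lemma at all: it is cited as a known theorem of Eliahou and Kervaire, so there is no internal argument to compare against. Your proposal is a correct and coherent route to the result via linear quotients and iterated mapping cones, which is the modern streamlined approach (essentially Herzog--Takayama), as opposed to Eliahou and Kervaire's original method of writing down the explicit resolution on symbols $e(u;\sigma)$ and verifying $\partial^2=0$, acyclicity, and minimality directly. Both routes funnel through the same combinatorial kernel, the unique factorization of monomials in a stable ideal as $w=g\cdot y$ with $g\in G(L)$ and $\max(g)\le\min(y)$, and you correctly identify the inclusion $(u_1,\dots,u_{s-1}):u_s\subseteq(x_1,\dots,x_{\max(u_s)-1})$ as the one step that genuinely uses that decomposition rather than being bookkeeping.

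Two small points worth tightening if you expand this into a full proof. First, your ordering of $G(L)$ by nondecreasing degree and then by decreasing $>_{\lex}$ within a degree is the right one, but you should state explicitly the auxiliary fact you are implicitly invoking in the $\subseteq$ step: if $u\in G(L)$ divides $m$ and $m=g(m)y(m)$ is the canonical decomposition, then $\deg g(m)\le\deg u$, with $g(m)\ge_{\lex}u$ in case of equality. Combined with $t<s$ this yields the contradiction immediately, and without it the phrase ``would contradict the greedy construction'' is a bit too compressed. Second, minimality of the mapping cone deserves one more sentence: because the linear strand of each added Koszul complex sits in internal degree $\deg u_s + (\text{homological shift})$ and each chain lift $\varphi_s$ raises internal degree by at least $1$ (as $\deg u_s\ge\deg u_{s-1}\ge\cdots$ and the comparison map must be homogeneous), the entries of the cone differential lie in $\mathfrak m$; you say this, but it is worth isolating as the reason no cancellation occurs across summands. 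With these clarifications the argument is complete.
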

\subsection{Simplicial complexes}
An (abstract) \emph{simplicial complex} $\Delta$ on a (finite) vertex set $V(\Delta)$ is any collection of subsets of $V(\Delta)$ closed under inclusion. The elements of $\Delta$ are called \emph{faces}, and a face that is maximal with respect to inclusion is called \emph{facet}. The \emph{dimension} of a face $F$ is the number $\dim(F):=\left| F\right|-1$, and the dimension of $\Delta$ is $\dim(\Delta):=\max\left\lbrace\dim(F)~:~ F\in\Delta \right\rbrace$. In particular $\dim(\emptyset)=-1$. If all facets of $\Delta$ have the same dimension, $\Delta$ is said to be \emph{pure}. One of the most natural combinatorial invariants of a $(d-1)$-dimensional simplicial complex to consider is its \emph{f-vector} $f(\Delta)=(f_{-1}(\Delta),f_0(\Delta),\dots,f_{d-1}(\Delta))$, defined by $f_i(\Delta):=\left|\left\lbrace F\in\Delta~:~\dim(F)=i \right\rbrace \right|$ for $-1\leq i\leq d-1$. However, for algebraic and combinatorial reasons it is often more convenient to consider a specific  invertible linear transformation of $f(\Delta)$; namely 
$$h_j(\Delta)=\sum_{i=0}^{j}(-1)^{j-i}\binom{d-i}{d-j}f_{i-1}(\Delta),$$
for $0\leq i\leq d$. The vector $h(\Delta)=(h_0(\Delta),h_1(\Delta),\dots,h_d(\Delta))$ is called the \emph{h-vector} of $\Delta$.  

Given a subset $W\subseteq V(\Delta)$ we define the subcomplex
$$\Delta_W:=\left\lbrace F\in\Delta: F\subseteq W \right\rbrace,$$
and we call a subcomplex \emph{induced} if it is of this form. Another subcomplex associated to $\Delta$ is its \emph{$j$-skeleton} $$\Skel_{j}(\Delta):=\left\lbrace F\in \Delta~:~ \dim(F)\leq j \right\rbrace,$$
consisting of all faces of dimension at most $j$ (for $0\leq j\leq d-1$). For two simplicial complexes $\Delta$ and $\Gamma$, with $\dim(\Delta)=d-1$ and $\dim(\Gamma)=e-1$ we define the \emph{join} of $\Delta$ and $\Gamma$ to be the $(d+e-1)$-dimensional complex defined by
$$\Delta*\Gamma=\left\lbrace F\cup G~:~ F\in\Delta, G\in\Gamma \right\rbrace.$$    
The \emph{link} $\lk_{\Delta}(F)$ of a face $F\in \Delta$ describes $\Delta$ locally around $F$:
$$
\lk_\Delta(F):=\{G\in \Delta~:~G\cup F\in \Delta,\; G\cap F=\emptyset\}.
$$
Simplicial complexes are in one to one correspondence to squarefree monomial ideals: Given a simplicial complex $\Delta$ with $V(\Delta)=[n]:=\{1,2,\ldots,n\}$ its \emph{Stanley-Reisner ideal} is the squarefree monomial ideal $I_{\Delta}\subseteq S$ defined by
$$I_{\Delta}:=\left( x_F~:~ F\notin\Delta \right)\subseteq S:=\FF[x_1,\ldots,x_n],$$
where $x_F=\prod_{i\in F}x_i$. The quotient $\mathbb{F}[\Delta]:=S/I_{\Delta}$ is called the \emph{Stanley-Reisner ring} of $\Delta$. It is well-known that $\dim(\mathbb{F}[\Delta])=\dim(\Delta)+1$.\\
This correspondence is extremely useful to study how algebraic invariants of the Stanley-Reisner rings reflect combinatorial and topological properties of the corresponding simplicial complex, and vice versa. A special instance for this is provided by Hochster's formula (see \cite[Theorem 5.5.1]{BH-book}):
\begin{lemma}[Hochster's formula]\label{Hochster}
	$$\beta_{i,i+j}(\mathbb{F}[\Delta])=\displaystyle\sum_{\substack{W\subseteq V(\Delta)\\\left| W\right| =i+j}} \dim_{\mathbb{F}}\widetilde{H}_{j-1}(\Delta_W;\mathbb{F}).$$
\end{lemma}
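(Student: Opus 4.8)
The plan is to compute $\Tor_i^S(\mathbb{F}[\Delta],\mathbb{F})$ by means of the Koszul complex and to exploit its fine multigrading. Let $K_\bullet$ be the Koszul complex of $x_1,\dots,x_n$ over $S$, which resolves $\mathbb{F}=S/\mathfrak{m}$, so that $\Tor_i^S(\mathbb{F}[\Delta],\mathbb{F})=H_i\big(K_\bullet\otimes_S\mathbb{F}[\Delta]\big)$. This complex is naturally $\ZZ^n$-graded: in homological degree $i$ it has $\mathbb{F}$-basis the symbols $e_F\otimes m$ with $F\subseteq[n]$, $|F|=i$ and $m$ a monomial with $m\notin I_\Delta$, placed in multidegree $\mathbf{e}_F+\deg(m)$, and the differential is the usual Koszul map $e_F\otimes m\mapsto\sum_{k\in F}\pm\,e_{F\setminus k}\otimes x_k m$. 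Decomposing into multigraded strands, $\Tor_i^S(\mathbb{F}[\Delta],\mathbb{F})=\bigoplus_{\mathbf{a}\in\ZZ^n}H_i\big((K_\bullet\otimes\mathbb{F}[\Delta])_{\mathbf{a}}\big)$; since $\beta_{i,i+j}$ is the dimension of the part of coarse degree $i+j$, it suffices to analyze each strand $(K_\bullet\otimes\mathbb{F}[\Delta])_{\mathbf{a}}$ with $|\mathbf{a}|:=\sum_k a_k=i+j$.

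First I would show that only squarefree multidegrees contribute. Suppose $a_\ell\geq 2$ for some $\ell$. On the strand of multidegree $\mathbf{a}$ every basis element $e_F\otimes m$ satisfies $x_\ell\mid m$; define $h(e_F\otimes m)=\pm\,e_{F\cup\ell}\otimes(m/x_\ell)$ when $\ell\notin F$ and $h(e_F\otimes m)=0$ when $\ell\in F$. This is legitimate because $m/x_\ell\notin I_\Delta$ (as $I_\Delta$ is a monomial ideal, a divisor of a nonmember is a nonmember), and a routine sign check gives $\partial h+h\partial=\pm\,\mathrm{id}$, so the strand is contractible and its homology vanishes. Hence all nonzero contributions arise from $\mathbf{a}=\mathbf{e}_W$ for subsets $W\subseteq V(\Delta)$, and such a strand contributes to $\beta_{i,i+j}$ exactly when $|W|=i+j$.

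Next I would identify the squarefree strand with a simplicial cochain complex. Fix $W\subseteq V(\Delta)$ with $|W|=i+j$. In homological degree $i$, the strand $(K_\bullet\otimes\mathbb{F}[\Delta])_{\mathbf{e}_W}$ has basis $\{e_F : F\subseteq W,\ |F|=i,\ x_{W\setminus F}\notin I_\Delta\}$, i.e.\ it is indexed by the faces $\sigma=W\setminus F$ of $\Delta_W$ of dimension $|W|-i-1$; under this relabeling the Koszul differential becomes, up to sign, the reduced simplicial coboundary $\sigma\mapsto\sum_{k\in W\setminus\sigma}\pm(\sigma\cup\{k\})$ of $\Delta_W$ (with the empty face included as the augmentation term). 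Therefore $H_i\big((K_\bullet\otimes\mathbb{F}[\Delta])_{\mathbf{e}_W}\big)\cong\widetilde{H}^{\,|W|-i-1}(\Delta_W;\mathbb{F})=\widetilde{H}^{\,j-1}(\Delta_W;\mathbb{F})$, which over a field has the same dimension as $\widetilde{H}_{j-1}(\Delta_W;\mathbb{F})$. Summing over all $W$ with $|W|=i+j$ gives the formula.

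The main obstacle is the bookkeeping in the last two steps: fixing the Koszul signs so that the relabeled differential is genuinely the reduced simplicial coboundary (including the augmentation term), and checking the homotopy identity $\partial h+h\partial=\pm\,\mathrm{id}$ on the non-squarefree strands. Both are standard but sign-sensitive; once they are settled, everything else is immediate. Alternatively one could phrase the vanishing step by noting that inverting the variable $x_\ell$ kills the corresponding Koszul homology, or replace the entire computation by the Alexander-dual formulation, but the direct Koszul argument is the most transparent route.
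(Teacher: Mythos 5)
Your proof is correct and is essentially the standard Koszul-complex argument, which is precisely the proof of \cite[Theorem 5.5.1]{BH-book} that the paper cites without reproducing: one decomposes $K_\bullet\otimes_S\FF[\Delta]$ into its $\ZZ^n$-graded strands, contracts the non-squarefree ones, and identifies the squarefree strand at $\mathbf{e}_W$ with the (shifted) augmented simplicial cochain complex of $\Delta_W$, finishing with universal coefficients over the field $\FF$. The only deferred points — the explicit Koszul signs making $\partial h+h\partial=\pm\,\mathrm{id}$ on a non-squarefree strand, and the sign conventions identifying the relabeled differential with the reduced coboundary of $\Delta_W$ — are genuinely routine, and your honest flagging of them is appropriate; there is no gap in the argument.
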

A simplicial complex $\Delta$ is called \emph{Cohen-Macaulay} over $\FF$ if $\mathbb{F}[\Delta]$ is a Cohen-Macaulay ring. As Cohen-Macaulayness (over a fixed field $\FF$) only depends on the geometric realization of $\Delta$,  Cohen-Macaulayness is a topological property (see e.g., \cite{Munkres}). In particular, triangulations of spheres and balls are Cohen-Macaulay over any field. Another crucial property of Cohen-Macaulay complexes is the following (see e.g., \cite{Stanley-greenBook}).
\begin{lemma}\label{hiCM}
	Let $\Delta$ be a $(d-1)$-dimensional Cohen-Macaulay simplicial complex and let $\Theta=(\theta_1,\dots,\theta_d)$ be an l.s.o.p.  for $\mathbb{F}[\Delta]$. Then
	$$h_i(\Delta)=\dim_{\mathbb{F}}\left( \mathbb{F}[\Delta]/\Theta\mathbb{F}[\Delta]\right)_i.$$ 
\end{lemma}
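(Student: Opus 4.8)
The plan is to compute and compare Hilbert series. First I would recall the standard expression for the Hilbert series of a Stanley--Reisner ring: since a basis of $\mathbb{F}[\Delta]_i$ is given by the monomials of degree $i$ whose support is a face of $\Delta$, one has
$$\sum_{i\ge 0}\dim_{\mathbb{F}}\mathbb{F}[\Delta]_i\,t^i=\sum_{F\in\Delta}\left(\frac{t}{1-t}\right)^{|F|}=\sum_{i=-1}^{d-1}f_i(\Delta)\,\frac{t^{i+1}}{(1-t)^{i+1}}.$$
A routine manipulation rewrites the right-hand side as $\big(\sum_{i=0}^{d}h_i(\Delta)\,t^i\big)/(1-t)^d$; this is nothing but the invertible linear transformation relating the $f$- and $h$-vectors recalled in \Cref{preliminaries}, so it requires no new input. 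Thus the Hilbert series of $\mathbb{F}[\Delta]$ equals $\big(\sum_{i=0}^{d}h_i(\Delta)\,t^i\big)/(1-t)^d$.

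Next I would exploit Cohen--Macaulayness. By definition every l.s.o.p.\ of $\mathbb{F}[\Delta]$ is a regular sequence, so the given $\Theta=(\theta_1,\dots,\theta_d)$ is a regular sequence on $\mathbb{F}[\Delta]$. For any graded ring $R$ and any homogeneous non-zerodivisor $\theta$ of degree $1$, the short exact sequence
$$0\longrightarrow R(-1)\xrightarrow{\ \cdot\theta\ } R\longrightarrow R/\theta R\longrightarrow 0$$
shows that passing from $R$ to $R/\theta R$ multiplies the Hilbert series by $(1-t)$. Applying this $d$ times along $\Theta$ gives
$$\sum_{i\ge 0}\dim_{\mathbb{F}}\big(\mathbb{F}[\Delta]/\Theta\mathbb{F}[\Delta]\big)_i\,t^i=(1-t)^d\cdot\frac{\sum_{i=0}^{d}h_i(\Delta)\,t^i}{(1-t)^d}=\sum_{i=0}^{d}h_i(\Delta)\,t^i,$$
and comparing the coefficient of $t^i$ on both sides yields $h_i(\Delta)=\dim_{\mathbb{F}}\big(\mathbb{F}[\Delta]/\Theta\mathbb{F}[\Delta]\big)_i$, as claimed. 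Note that this forces the right-hand side to vanish for $i<0$ and for $i>d$, consistent with $\mathbb{F}[\Delta]/\Theta\mathbb{F}[\Delta]$ being Artinian, which in any case follows from $\Theta$ being an l.s.o.p.

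There is no genuine obstacle here: every ingredient is standard. The only step deserving a moment of care is the bookkeeping in the first paragraph, namely verifying the identity $\sum_{i=-1}^{d-1}f_i(\Delta)\,t^{i+1}/(1-t)^{i+1}=\big(\sum_{i=0}^{d}h_i(\Delta)\,t^i\big)/(1-t)^d$; but this is exactly the defining relation between the $f$- and $h$-vectors, so it can be invoked rather than re-derived.
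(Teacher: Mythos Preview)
Your argument is correct and is exactly the standard proof of this well-known fact. The paper does not actually prove \Cref{hiCM}; it simply cites it (``see e.g., \cite{Stanley-greenBook}''), so there is no in-paper proof to compare against, and your Hilbert-series computation is precisely the argument one finds in the cited reference.
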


In \Cref{section:pseudo} we will be interested in another class of simplicial complexes, so-called \emph{normal pseudomanifolds}. We call a connected pure $(d-1)$-dimensional simplicial complex $\Delta$ a \emph{normal pseudomanifold} if every $(d-2)$-face of $\Delta$ is contained in exactly two facets and if the link of every face of $\Delta$ of dimension $\leq d-3$ is connected. 

We finally provide the definition of balanced simplicial complexes. 
\begin{definition}\label{defbalanced}
	A $(d-1)$-dimensional simplicial complex $\Delta$ is \emph{balanced} if there is a partition of its vertex set $V(\Delta)=\displaystyle\bigcupdot_{i=1}^d V_i$ such that $\left| F\cap V_i\right|\leq 1$, for every $i=1,\dots,d$. 
\end{definition} 
We often refer to the sets $V_i$ as \emph{color classes}. Another way to phrase this definition is to observe that $\Delta$ is balanced if and only if its $1$-skeleton is $d$-colorable, in the classical graph theoretic sense. Note that, without extra assumptions on its structure, a balanced simplicial complex does not uniquely determine the partition in color classes, nor their sizes, as shown by the middle and right complex in \Cref{3complexes}. However, in this article, we will always assume the vertex partition to be part of the data contained in $\Delta$. 
\begin{figure}[h]
	\centering
	\includegraphics[scale=1]{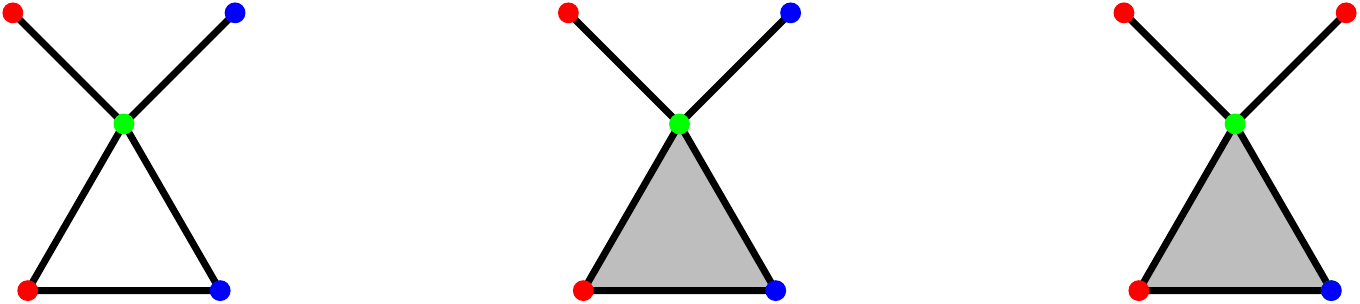}
	\caption{From left to right: a simplicial complex that is not balanced. Two balanced complexes with different partitions in color classes.}
	\label{3complexes}
\end{figure}     
The class of pure balanced simplicial complexes agrees with the class of so-called \emph{completely balanced} complexes, originally introduced by Stanley in \cite{St79}. However, a balanced simplicial complex in the sense of \Cref{defbalanced} does not need to be pure. 
We want to point out that a balanced simplicial complex cannot have \emph{too many} edges, since all monochromatic edges are forbidden. This idea will be made more precise and used intensively in the following sections.

\section{General balanced simplicial complexes}
\label{balsimp}
In the following, we consider arbitrary balanced simplicial complexes without assuming any further algebraic or combinatorial properties. 
Our aim is to prove explicit upper bounds for the graded Betti numbers of the Stanley-Reisner rings of those simplicial complexes. This will be achieved by exhibiting (non-balanced) simplicial complexes (one for each strand in the linear resolution), whose graded Betti numbers are larger than those of all balanced complexes on a fixed vertex partition.

We first need to introduce some notation. 
Recall that the \emph{clique complex} of a graph $G=(V,E)$ on vertex set $V$ and edge set $E$ is the simplicial complex $\Delta(G)$ on vertex set $V$, whose faces correspond to cliques of $G$, i.e.,
$$
\Delta(G):=\{F\subseteq V~:~ \{i,j\}\in E \mbox{ for all } \{i,j\}\subseteq F \mbox{ with }i\neq j\}.
$$
Let $\Delta$ be a $(d-1)$-dimensional balanced simplicial complex with vertex partition $V(\Delta)=\bigcupdot_{i=1}^d V_i$. Let $n_i:=|V_i|$ denote the sizes of the color classes of $V(\Delta)$. Throughout this section, we denote with $K_{n_1,\ldots,n_d}$ the complete $d$-partite graph on vertex set $\bigcupdot_{i=1}^d V_i$.  
Note that the $1$-skeleton of $\Delta$, considered as a graph, is clearly a subgraph of $K_{n_1,\ldots,n_d}$ and that, by definition of a clique complex, we have $\Delta\subseteq\Delta(K_{n_1,\dots,n_d})$.  

We can now state our first bound, though not yet in an explicit form.

\begin{theorem}\label{skeldpartite}
	Let $\Delta$ be a $(d-1)$-dimensional balanced simplicial complex on $V=\displaystyle\bigcupdot_{i=1}^d V_i$  with $n_i:=\left| V_i\right|$. 
	Then
	$$\beta_{i,i+j}\left( \mathbb{F}\left[\Delta\right] \right)\leq 
	\beta_{i,i+j}\left( \mathbb{F}\left[ \Skel_{j-1}\left(\Delta(K_{n_1,\ldots,n_d})\right)\right] \right)$$ 
	for every $i,j\geq 0$.
\end{theorem}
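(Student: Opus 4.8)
\section*{Proof proposal}

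The plan is to reduce everything, via Hochster's formula (\Cref{Hochster}), to a termwise comparison over subsets $W\subseteq V$, and then to exploit the structural feature of the comparison complex $\Skel_{j-1}(\Delta(K_{n_1,\ldots,n_d}))$ that it has no faces of dimension $j$: for such a complex reduced homology in degree $j-1$ agrees with the space of $(j-1)$-cycles, and cycle spaces, unlike homology, behave monotonically under inclusions at the chain level.

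Concretely, I would first dispose of the case $j=0$, where (assuming, as usual, that every singleton is a face) $\beta_{i,i}(\mathbb{F}[\Delta])$ equals $1$ for $i=0$ and $0$ for $i\geq 1$, so the inequality is trivial. For $j\geq 1$, applying \Cref{Hochster} to both sides shows that it suffices to prove, for every $W\subseteq V$,
$$\dim_{\mathbb{F}}\widetilde{H}_{j-1}(\Delta_W;\mathbb{F})\leq \dim_{\mathbb{F}}\widetilde{H}_{j-1}(\Gamma_W;\mathbb{F}),\qquad \Gamma:=\Skel_{j-1}\big(\Delta(K_{n_1,\ldots,n_d})\big).$$
At this point I would record two elementary facts: first, forming induced subcomplexes commutes with truncating to skeleta, so $\Gamma_W=\Skel_{j-1}\big(\Delta(K_{n_1,\ldots,n_d})_W\big)$; second, since $\Delta$ is balanced its $1$-skeleton is a subgraph of $K_{n_1,\ldots,n_d}$, hence $\Delta\subseteq\Delta(K_{n_1,\ldots,n_d})$ and so $\Delta_W\subseteq\Delta(K_{n_1,\ldots,n_d})_W$. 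In particular, every face of $\Delta_W$ of dimension $j-1$ or $j-2$ is a face of $\Gamma_W$.

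The heart of the argument is then the chain of inequalities
$$\dim_{\mathbb{F}}\widetilde{H}_{j-1}(\Delta_W;\mathbb{F})\;\leq\;\dim_{\mathbb{F}}\Ker\partial_{j-1}^{\Delta_W}\;\leq\;\dim_{\mathbb{F}}\Ker\partial_{j-1}^{\Gamma_W}\;=\;\dim_{\mathbb{F}}\widetilde{H}_{j-1}(\Gamma_W;\mathbb{F}),$$
where for a simplicial complex $X$ we write $\partial_{j-1}^{X}\colon C_{j-1}(X)\to C_{j-2}(X)$ for the simplicial boundary map (over $\mathbb{F}$, with a fixed orientation convention on $V$). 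The first inequality holds because homology is a quotient of the cycle space by the boundaries, so its dimension is at most that of the cycle space. The second holds because, by the preceding observation, $C_{j-1}(\Delta_W)\subseteq C_{j-1}(\Gamma_W)$ and $C_{j-2}(\Delta_W)\subseteq C_{j-2}(\Gamma_W)$, and the boundary maps are compatible with these inclusions; hence every $(j-1)$-cycle of $\Delta_W$ is a $(j-1)$-cycle of $\Gamma_W$. The final equality is the payoff from passing to the $(j-1)$-skeleton: $\Gamma_W$ has no $j$-faces, so $\partial_j^{\Gamma_W}=0$ and $\widetilde{H}_{j-1}(\Gamma_W;\mathbb{F})=\Ker\partial_{j-1}^{\Gamma_W}$. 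Summing these inequalities over all $W\subseteq V$ with $|W|=i+j$ and applying \Cref{Hochster} on each side yields the claim.

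The only step that is not purely formal is the one just flagged: reduced homology is not monotone under inclusion of simplicial complexes, so one cannot compare $\Delta_W$ with $\Delta(K_{n_1,\ldots,n_d})_W$ directly. Replacing the clique complex by its $(j-1)$-skeleton -- which kills all $(j-1)$-boundaries while keeping all faces of dimension $j-1$ and $j-2$ -- is exactly what makes the termwise bound hold, and it also explains why the bound is phrased in terms of a different skeleton for each linear strand $j$.
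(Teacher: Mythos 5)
Your proof is correct and follows essentially the same approach as the paper: reduce via Hochster's formula to a termwise comparison, observe that since $\Gamma_W$ has no $j$-faces its $(j-1)$st reduced homology equals the $(j-1)$-cycle space, and use the inclusion of chain groups $C_{j-1}(\Delta_W)\subseteq C_{j-1}(\Gamma_W)$ to compare cycle spaces. Your explicit treatment of $j=0$ and your remark about why homology is not monotone under inclusion (hence the need for the skeleton) are helpful clarifications of the same argument.
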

\begin{proof}
The proof relies on Hochster's formula. 
We fix $j\geq 0$. To simplify notation we set  $\Sigma= \Skel_{j-1}(\Delta(K_{n_1,\ldots,n_d}))$. Given a simplicial complex $\Gamma$, we denote by  $(C_{\bullet}(\Gamma),\partial_{j}^\Gamma)$ the chain complex which computes its  simplicial homology over $\mathbb{F}$.\\ 
Let $W\subseteq V$. 
	As $\dim \Sigma= j-1$, we have $\dim\left( \Sigma_W\right)\leq j-1$ and hence $C_{j}\left( \Sigma_W\right)=0$. This implies 
	\begin{equation}
	\label{eq:homology}
	\widetilde{H}_{j-1}\left( \Sigma_W;\mathbb{F}\right)= \ker \partial_{j-1}^{\Sigma_W}. 
		\end{equation}
	As $\Delta(K_{n_1,\ldots,n_d})$ is the ``maximal'' balanced simplicial complex with vertex partition $\bigcupdot_{i=1}^d V_i$, it follows that   $\Skel_{j-1}(\Delta)\subseteq \Sigma$ and thus $C_{j-1}\left( \Delta_W\right)\subseteq C_{j-1}\left( \Sigma_W\right)$. In particular, we conclude 
	$$
	\ker \partial_{j-1}^{\Delta_W} \subseteq \ker \partial_{j-1}^{\Sigma_W}
	$$
	and, using \eqref{eq:homology}, we obtain
	$$\dim_{\mathbb{F}}\widetilde{H}_{j-1}\left( \Delta_W;\mathbb{F}\right)\leq \dim_{\mathbb{F}}\widetilde{H}_{j-1}\left( \Sigma_W;\mathbb{F}\right).$$ 
	The claim follows from Hochster's formula (\Cref{Hochster}).
\end{proof}

We now provide a specific example of the bounds in \Cref{skeldpartite}.

\begin{example}
The graded Betti numbers of any $3$-dimensional balanced simplicial complex on $12$ vertices with $3$ vertices in each color class can be bounded by the graded Betti numbers of the skeleta of $\Gamma:=\Delta(K_{3,3,3,3})$. More precisely, we can bound $\beta_{i,i+j}(\FF[\Delta])$ by the corresponding Betti number of the $(j-1)$-skeleton of $\Gamma$. We record those numbers in the following table:
\begin{table}[h!]

	\[\begin{array}{r|l|l|l|l|l|l|l|l|l|l|l|c}
	\setminus  i & 0 &1 & 2 & 3 & 4 & 5 & 6 & 7 & 8 &9 &10 &11  \\ \hline
	\beta_{i,i+1}(\mathbb{F}[\Skel_{0}(\Gamma)])&0&66 & 440& 1485& 3168& 4620& 4752& 3465& 1760& 594& 120& 11\\
	\beta_{i,i+2}(\mathbb{F}[\Skel_{1}(\Gamma)])&0 &108& 945& 3312& 6720& 8856& 7875& 4720& 1836& 420& 43&0\\
	\beta_{i,i+3}(\mathbb{F}[\Skel_{2}(\Gamma)])&0 &81 & 648& 2376& 4752& 5733& 4352& 2052& 552& 65& 0&0\\
	\beta_{i,i+4}(\mathbb{F}[\Gamma])&0&0&0&0&81 & 216& 216& 96& 16&0&0&0\\
	\end{array}
	\]
	\caption{Graded Betti numbers of the skeleta of $\Gamma=\Delta(K_{3,3,3,3})$.}
	\label{tab:tableSkel}
\end{table}
\end{example}

\begin{remark}
Observe that the $(j-1)$-skeleton of the clique complex $\Delta(K_{n_1,\ldots,n_d})$ is  balanced if and only if $j=d$ (or, less interestingly, if $j=1$). It follows that the upper bounds for the graded Betti numbers of a $(d-1)$-dimensional balanced simplicial complex, given in  \Cref{skeldpartite}, are attained for the $d$\textsuperscript{th} (and trivially, the $0$\textsuperscript{th}) row of the Betti table. However, they are not necessarily sharp for the other rows of the Betti table and we do not expect them to be so.
\end{remark}

In order to turn the upper bounds from  \Cref{skeldpartite} into explicit ones, we devote the rest of this section to the computation of the graded Betti numbers of the skeleta of $\Delta(K_{n_1,\dots,n_d})$. 
We first consider $\Delta(K_{n_1,\dots,n_d})$. As a preparation we determine the homology of induced subcomplexes of $\Delta(K_{n_1,\dots,n_d})$.

\begin{lemma}\label{homology_dpartite}
Let $\Gamma=\Delta(K_{n_1,\dots,n_d})$ with vertex partition $V:=\displaystyle\bigcupdot_{i=1}^d V_i$. For $W\subseteq V$, set $W_i:=W\cap V_i$, for $1\leq i\leq d$ and $\{ i_1,\ldots, i_k\}:=\{i~:~ W_i\neq \emptyset\}$. Then
	$$\widetilde{H}_{j-1}\left(\Gamma_W; \mathbb{F}\right)=
	\begin{cases}
	\mathbb{F}^{\left| W_{i_1}\right|-1 }\otimes_{\mathbb{F}}\dots\otimes_{\mathbb{F}}\mathbb{F}^{\left| W_{i_k}\right|-1 }, & \mbox{if }k=j\\
		0, &\mbox{if } k\neq j
	\end{cases}.$$
	In particular, $\widetilde{H}_{j-1}\left(\Gamma_W; \mathbb{F}\right)\neq 0$ if and only if $k=j$ and $|W_{i_\ell}|\geq 2$ for $1\leq \ell\leq k$.
\end{lemma}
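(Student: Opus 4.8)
The plan is to recognize $\Gamma=\Delta(K_{n_1,\dots,n_d})$ as an iterated join of $0$-dimensional complexes and then to read off the homology of induced subcomplexes from the Künneth formula for joins. Since two vertices of $K_{n_1,\dots,n_d}$ are adjacent precisely when they lie in distinct parts, a subset $F\subseteq V$ is a clique of $K_{n_1,\dots,n_d}$ --- equivalently a face of $\Gamma$ --- if and only if $|F\cap V_i|\leq 1$ for every $i$. Hence, writing $\Lambda_i$ for the simplicial complex on $V_i$ whose faces are $\emptyset$ together with all singletons $\{v\}$ with $v\in V_i$, we obtain $\Gamma=\Lambda_1*\cdots*\Lambda_d$.

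First I would note that forming induced subcomplexes commutes with joins: for $W\subseteq V$ with $W_i=W\cap V_i$ one has $\Gamma_W=(\Lambda_1)_{W_1}*\cdots*(\Lambda_d)_{W_d}$, and since $(\Lambda_i)_{W_i}=\{\emptyset\}$ whenever $W_i=\emptyset$ and joining with $\{\emptyset\}$ has no effect, this collapses to $\Gamma_W=(\Lambda_{i_1})_{W_{i_1}}*\cdots*(\Lambda_{i_k})_{W_{i_k}}$, a join of $k$ complexes, the $\ell$-th of which consists of $|W_{i_\ell}|$ isolated points and therefore has $\widetilde{H}_0\cong\mathbb{F}^{|W_{i_\ell}|-1}$ and vanishing reduced homology in all other degrees. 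Next I would invoke the fact that, with the reduced convention $\widetilde{C}_{-1}(\cdot)=\mathbb{F}$ (generated by the empty face), the reduced chain complex of a join of disjoint complexes satisfies $\widetilde{C}_n(X*Y)\cong\bigoplus_{p+q=n-1}\widetilde{C}_p(X)\otimes_{\mathbb{F}}\widetilde{C}_q(Y)$, so that Künneth over a field gives
$$\widetilde{H}_{n}(X*Y;\mathbb{F})\cong\bigoplus_{p+q=n-1}\widetilde{H}_p(X;\mathbb{F})\otimes_{\mathbb{F}}\widetilde{H}_q(Y;\mathbb{F}).$$
Applying this identity $k-1$ times to the factors $(\Lambda_{i_\ell})_{W_{i_\ell}}$, whose reduced homology is concentrated in degree $0$, forces $\widetilde{H}_{\bullet}(\Gamma_W;\mathbb{F})$ to be concentrated in degree $(k-1)$, where it equals $\mathbb{F}^{|W_{i_1}|-1}\otimes_{\mathbb{F}}\cdots\otimes_{\mathbb{F}}\mathbb{F}^{|W_{i_k}|-1}$. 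This is exactly the asserted formula, and the final ``in particular'' follows since $\mathbb{F}^{m-1}\neq 0$ precisely when $m\geq 2$.

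There is essentially no obstacle beyond bookkeeping; the only point demanding care is the degree shift in the join/Künneth formula (equivalently, the handling of the empty face), together with the degenerate base case $k=0$, i.e.\ $W=\emptyset$, where $\Gamma_W=\{\emptyset\}$ satisfies $\widetilde{H}_{-1}(\{\emptyset\};\mathbb{F})=\mathbb{F}$, matching the empty tensor product. As an alternative that avoids citing the join formula, one could argue by induction on $k$ (or on $d$): if some $|W_{i_\ell}|=1$ then $\Gamma_W$ is a cone, hence acyclic, and in the remaining case a Mayer--Vietoris argument splitting off one part reduces the computation to $k-1$ parts; but the join computation above is cleaner and I would present that.
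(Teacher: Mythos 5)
Your proof is correct and follows essentially the same route as the paper: write $\Gamma$ as the join $\overline{V_1}*\cdots*\overline{V_d}$ of $0$-dimensional complexes, observe that $\Gamma_W=\overline{W_{i_1}}*\cdots*\overline{W_{i_k}}$, and apply the K\"unneth formula for joins. The extra care you take with the degree shift and the degenerate case $W=\emptyset$ is welcome but not a departure from the paper's argument.
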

\begin{proof}
Denoting by $\overline{V_i}$ the simplicial complex consisting of the isolated vertices in $V_i$, we can write $\Gamma$ as the join of those $\overline{V_i}$:
	\begin{equation}\label{eq:gamma is a join}
	\Gamma=\overline{V_1}*\dots*\overline{V_d}. 
	\end{equation}
In particular, we have
	$$\Gamma_W=\overline{W_{i_1}}*\dots*\overline{W_{i_k}}.$$
Using the K\"unneth formula for the homology of a join (see e.g., \cite[\S 58]{Munkres}) and the fact that $$\widetilde{H}_j\left( \overline{W_i};\mathbb{F}\right)=\begin{cases}
	\mathbb{F}^{\left| W_i\right|-1 }, &\mbox{if } j =0\\
		0 &\mbox{if } j\neq 0,
	\end{cases}$$
	we deduce the desired formula for the homology. The ``In particular''-part follows directly from this formula.
\end{proof}	

\begin{remark}\label{GammaCM}
 Since Cohen-Macaulayness is preserved under taking joins and since every $0$-dimensional simplicial complex is Cohen-Macaulay, it follows directly from \eqref{eq:gamma is a join} that the clique complex $\Delta(K_{n_1,\dots,n_d})$ is a Cohen-Macaulay complex. Accordingly, the same is true for the skeleta of $\Delta(K_{n_1,\ldots,n_d})$. 
\end{remark}
\Cref{homology_dpartite} enables us to compute the graded Betti numbers of $\Delta(K_{n_1,\ldots,n_d})$.
\begin{lemma}\label{ugly}
Let $d,n_1,\ldots,n_d$ be positive integers. Then
	\begin{equation}
	\label{eq:Betti ugly}
	\beta_{i,i+j}\left( \mathbb{F}\left[ \Delta(K_{n_1,\dots,n_d})\right] \right)=\displaystyle\sum_{\substack{I\subseteq[d]\\ I=\left\lbrace i_1,\dots, i_j \right\rbrace  }}\left( \displaystyle\sum_{\substack{c_1+\dots +c_j=i \\c_\ell\geq 1, \forall \ell\in\left[ 1,j\right]}}\left( \prod_{\ell=1}^j c_\ell\cdot\binom{n_{i_\ell}}{c_\ell-1} \right)\right)  
		\end{equation}
	for $i,j\geq 0$.
In particular, if $n_1=\cdots=n_d=k$, then 
	$$\beta_{i,i+j}\left( \mathbb{F}\left[ \Delta(K_{k,\ldots, k})\right] \right)=\binom{d}{j}\left( \displaystyle\sum_{\substack{c_1+\dots +c_j=i \\c_\ell\geq 1, \forall \ell\in\left[ 1,j\right]}}\left( \prod_{\ell=1}^j c_\ell\cdot\binom{k}{c_\ell-1} \right)\right) $$
	for $i,j\geq 0$.

\end{lemma}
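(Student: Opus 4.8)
The plan is to combine Hochster's formula (\Cref{Hochster}) with the homology computation of \Cref{homology_dpartite}. By Hochster, $\beta_{i,i+j}(\mathbb{F}[\Gamma]) = \sum_{|W|=i+j} \dim_\mathbb{F} \widetilde{H}_{j-1}(\Gamma_W;\mathbb{F})$, where $\Gamma = \Delta(K_{n_1,\dots,n_d})$. By \Cref{homology_dpartite}, the only subsets $W$ contributing are those meeting exactly $j$ of the color classes, say the classes indexed by $I = \{i_1,\dots,i_j\} \subseteq [d]$, with $|W_{i_\ell}| \geq 2$ for each $\ell$; for such a $W$ the contribution is $\prod_{\ell=1}^j (|W_{i_\ell}| - 1)$, the dimension of the tensor product $\mathbb{F}^{|W_{i_1}|-1} \otimes \cdots \otimes \mathbb{F}^{|W_{i_j}|-1}$.

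So I would first fix a $j$-subset $I = \{i_1,\dots,i_j\}$ of $[d]$ and organize the sum over all $W$ that hit exactly the classes in $I$. Writing $c_\ell := |W_{i_\ell}|$, the constraint $|W| = i+j$ becomes $\sum_{\ell=1}^j c_\ell = i+j$, i.e. $\sum_{\ell=1}^j (c_\ell - 1) = i$ with each $c_\ell \geq 1$ (the $c_\ell \geq 2$ terms being the only ones that contribute a nonzero product, but allowing $c_\ell = 1$ does no harm since those terms vanish). The number of ways to choose $W_{i_\ell} \subseteq V_{i_\ell}$ with $|W_{i_\ell}| = c_\ell$ is $\binom{n_{i_\ell}}{c_\ell}$, and each such choice contributes $\prod_\ell (c_\ell - 1)$ to the Betti number. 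Hence the contribution of the fixed $I$ is
$$
\sum_{\substack{c_1 + \cdots + c_j = i+j \\ c_\ell \geq 1}} \prod_{\ell=1}^j (c_\ell - 1)\binom{n_{i_\ell}}{c_\ell}.
$$
To match the stated formula, I reindex by $c_\ell \mapsto c_\ell$ shifted: setting $c_\ell' = c_\ell$ in the paper's notation, one checks $(c_\ell - 1)\binom{n_{i_\ell}}{c_\ell} = c_\ell' \binom{n_{i_\ell}}{c_\ell' - 1}$ after the substitution $c_\ell' = c_\ell$... more precisely, writing the paper's summation variable $c_\ell$ so that $\sum c_\ell = i$ with $c_\ell \geq 1$, we put $c_\ell = (\text{old } c_\ell) - 1$, giving $\binom{n_{i_\ell}}{c_\ell + 1}(c_\ell) $; then the elementary identity $c_\ell \binom{n_{i_\ell}}{c_\ell+1} = c_\ell \binom{n_{i_\ell}}{(c_\ell+1)-1}\cdot\frac{\text{stuff}}{\cdots}$ — here I should just carefully verify that $c_\ell\binom{n_{i_\ell}}{c_\ell+1}$ equals $c_\ell \binom{n_{i_\ell}}{c_\ell - 1}$ is \emph{not} an identity, so the correct reindexing is the one where the paper's $\binom{n_{i_\ell}}{c_\ell - 1}$ arises directly: with $W$ meeting $V_{i_\ell}$ in a set of size $c_\ell$, there are $\binom{n_{i_\ell}}{c_\ell}$ choices and homology rank $c_\ell - 1$; matching indices means the paper's $c_\ell$ is what I called $c_\ell$ and one rewrites $\binom{n_{i_\ell}}{c_\ell}(c_\ell - 1)$. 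Comparing with $c_\ell\binom{n_{i_\ell}}{c_\ell - 1}$ via Pascal-type manipulation is the one computational point to nail down.

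Summing over all $j$-subsets $I \subseteq [d]$ then yields \eqref{eq:Betti ugly}. For the "in particular" statement, when $n_1 = \cdots = n_d = k$ the inner double sum no longer depends on which $j$-subset $I$ is chosen, so it factors as $\binom{d}{j}$ times the common value, giving the displayed formula. The main (and really only) obstacle is the bookkeeping in the reindexing step — making sure the binomial coefficient $\binom{n_{i_\ell}}{c_\ell - 1}$ and the factor $c_\ell$ in the paper's formula correctly account for "choose a $c_\ell$-subset of $V_{i_\ell}$" together with "homology rank $|W_{i_\ell}| - 1$"; everything else is a direct substitution into Hochster's formula using \Cref{homology_dpartite}. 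I should also remark that the terms with some $c_\ell = 1$ contribute zero (as the homology vanishes), which is consistent with the formula since the factor $\prod c_\ell$ is harmless — wait, that factor is never zero — so in fact one must restrict, as the paper does, to the combinatorial count where the vanishing is encoded by $\binom{n_{i_\ell}}{c_\ell - 1}$ being interpreted appropriately; I will double-check this edge case when writing the details.
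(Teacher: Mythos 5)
Your approach is exactly the paper's: apply Hochster's formula, use \Cref{homology_dpartite} to identify which induced subcomplexes $\Gamma_W$ have nonvanishing homology, and group subsets $W$ by the set $I$ of colour classes that $W$ meets. Your unease in the reindexing step is well founded, and the resolution is that the published formula \eqref{eq:Betti ugly} contains a typo. Writing $a_\ell := |W_{i_\ell}|$, the contribution of a fixed $j$-subset $I=\{i_1,\dots,i_j\}$ is, exactly as you computed,
\[
\sum_{\substack{a_1+\cdots+a_j=i+j\\ a_\ell\geq 1}}\ \prod_{\ell=1}^{j}(a_\ell-1)\binom{n_{i_\ell}}{a_\ell},
\]
and the substitution $c_\ell := a_\ell - 1$ (so $\sum c_\ell = i$, $c_\ell \geq 0$, and any term with some $c_\ell=0$ vanishes) turns it into
\[
\sum_{\substack{c_1+\cdots+c_j=i\\ c_\ell\geq 1}}\ \prod_{\ell=1}^{j}c_\ell\binom{n_{i_\ell}}{c_\ell+1}.
\]
The binomial here is $\binom{n_{i_\ell}}{c_\ell+1}$, not $\binom{n_{i_\ell}}{c_\ell-1}$ as printed, and there is no identity converting one into the other --- you were right to be suspicious rather than force the match. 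The discrepancy is visible in the paper's own worked example for $\Delta(K_{3,3,2})$ and $\beta_{3,5}$: for the pair $I=\{1,3\}$ (with $n_{i_1}=3$, $n_{i_2}=2$) the text reports the inner sum as $2$, which agrees with the direct Hochster count $\binom{3}{3}\binom{2}{2}\cdot(3-1)(2-1)=2$ and with the corrected binomial, but the formula as printed gives $1\cdot 2\cdot\binom{3}{0}\binom{2}{1}+2\cdot 1\cdot\binom{3}{1}\binom{2}{0}=10$. (Similarly, \Cref{tab:tableSkel} lists $\beta_{4,8}(\FF[\Delta(K_{3,3,3,3})])=81=\binom{3}{2}^4$, while the printed formula would give $\binom{3}{0}^4=1$.) So prove the identity with $\binom{n_{i_\ell}}{c_\ell+1}$ (and likewise $\binom{k}{c_\ell+1}$ in the ``in particular'' part); your argument already does, and then the factor $\binom{d}{j}$ comes out for free when all $n_i=k$. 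As for your edge-case worry: the constraint $c_\ell\geq 1$ encodes $|W_{i_\ell}|\geq 2$, the exact condition for $\widetilde{H}_0(\overline{W_{i_\ell}};\FF)\neq 0$; terms with smaller intersection would carry a factor $c_\ell=0$ and drop out, so once the binomial index is fixed the formula is self-consistent.
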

\begin{proof}
We prove the statement by a direct application of Hochster's formula. Fix $i,j\geq 0$. By \Cref{homology_dpartite} and \Cref{Hochster}, to compute $\beta_{i,i+j}(\Delta(K_{n_1,\ldots,n_d}))$, we need to count subsets $W\subseteq \bigcupdot_{\ell=1}^d V_i$ such that $|\{\ell~:~W\cap V_\ell\neq \emptyset\}|=j$ and $|W\cap V_\ell|\neq 1$ for $1\leq \ell\leq d$. To construct such a set, we proceed as follows: 
\begin{itemize}
\item  We first choose $i_1<\cdots <i_j$ such that $W\cap V_{i_\ell}\neq \emptyset $ for $1\leq \ell \leq j$. 
\item Next, for  each $i_\ell$ we pick an integer $c_t\geq 2$, with the property that $c_1+\cdots +c_j=i+j$.
		\item Finally, there are $\binom{n_{i_\ell}}{c_\ell}$ ways to choose $c_\ell$ vertices among the $n_{i_\ell}$ vertices of $V_{i_\ell}$.
	\end{itemize}
	By \Cref{homology_dpartite} the dimension of the $(j-1)$\textsuperscript{st} homology of such a subset $W$ equals  $\prod_{\ell=1}^j\left( c_\ell-1\right)$. Combining the previous argument, we deduce the required formula \Cref{eq:Betti ugly}.
	The second statement now is immediate.
\end{proof}	

We illustrate \Cref{eq:Betti ugly} with an example.
\begin{example}
	Consider the clique complex $\Delta(K_{3,3,2})$ of $K_{3,3,2}$. To compute $\beta_{3,5}(\mathbb{F}[\Delta(K_{3,3,2})])$, we need to consider the $2$-element subsets of $[3]$.
	
	For the set $\{1,2\}$ the inner sum in \eqref{eq:Betti ugly} equals
	\begin{equation*}
	\sum_{\substack{c_1+c_2=3\\c_1,c_2\geq 1}}c_1\cdot c_2\cdot \binom{3}{c_1-1}\cdot\binom{3}{c_2-1}=12,
	\end{equation*}
	since the sum has two summands (corresponding to $(c_1,c_2)\in \{(1,2),(2,1)\}$), each contributing with $6$.\\
	Similarly, for $\{1,3\}$ and $\{2,3\}$, we obtain $2$ for the value of the inner sum. In total, this yields:
	$$\beta_{3,5}(\mathbb{F}[\Delta])=12+2+2=16.$$
	\end{example}
	
	We now turn our attention to the computation of the graded Betti numbers of the skeleta of $\Delta(K_{n_1,\dots,n_d})$. The following result, which is a special case of  \cite[Theorem 3.1]{2015arXiv150205670R} by Roksvold and Verdure, is crucial for this aim.
\begin{lemma}\label{roksvold_verdure}
		Let $\Delta$ be a $(d-1)$-dimensional Cohen-Macaulay complex with $f_0(\Delta)=n$. Then
		$$\beta_{i,i+j}\left( \mathbb{F}\left[ \Skel_{d-2}(\Delta)\right] \right)=
		\begin{cases}
		\beta_{i,i+j}\left( \mathbb{F}\left[ \Delta\right] \right), & \mbox{ if }j<d-1\\
		\beta_{i,i+d-1}\left( \mathbb{F}\left[ \Delta\right] \right)-\beta_{i-1,i+d-1}\left( \mathbb{F}\left[ \Delta\right] \right)+\binom{n-d}{i-1}f_{d-1}\left( \Delta\right), & \mbox{ if }j=d-1\\
		0, &\mbox{ if } j\geq d
		\end{cases}$$
		for $0\leq i\leq n-d+1$. 
\end{lemma}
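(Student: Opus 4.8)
The plan is to compare the resolutions of $\FF[\Delta]$ and $\FF[\Skel_{d-2}(\Delta)]$ by means of a short exact sequence of $S$-modules and the long exact sequence it induces in $\Tor_\bullet(-,\FF)$.

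Write $\Delta':=\Skel_{d-2}(\Delta)$, $I:=I_\Delta$ and $I':=I_{\Delta'}$. Since $\dim\Delta=d-1$, every vertex subset of cardinality $d$ that lies in $\Delta$ is automatically a facet, and comparing squarefree monomials one sees that $I'=I+(x_F~:~F\text{ a facet of }\Delta)$, a sum with exactly $f_{d-1}(\Delta)$ additional generators, all of degree $d$. The crucial step will be to identify the module $M:=I'/I$. I would show that, as a graded $S$-module,
$$M\;\cong\;\bigoplus_{F\text{ facet of }\Delta}\bigl(S/(x_k~:~k\notin F)\bigr)(-d).$$
Indeed, $M$ is generated by the classes $\overline{x_F}$; for a facet $F$ and $k\notin F$ one has $F\cup\{k\}\notin\Delta$, hence $x_kx_F\in I$; for two distinct facets $F\neq F'$ the set $F\cup F'$ is a non-face, so every monomial divisible by $x_Fx_{F'}$ lies in $I$; and the monomials with support exactly $F$ belong to the standard monomial $\FF$-basis of $S/I$, so $\FF[x_k~:~k\in F]\cdot\overline{x_F}$ is a free $\FF[x_k~:~k\in F]$-module of rank one with generator $\overline{x_F}$ in degree $d$, and the sum of these submodules over all facets is direct. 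As each summand $S/(x_k~:~k\notin F)$ is the quotient of $S$ by a regular sequence of $n-d$ linear forms, its minimal free resolution is the Koszul complex on those forms; consequently
$$\beta^S_{p,p+q}(M)=\begin{cases}\binom{n-d}{p}f_{d-1}(\Delta),&q=d,\\0,&q\neq d,\end{cases}$$
so that $\Tor_p(M,\FF)$ is concentrated in internal degree $p+d$.

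With this in hand I would feed the short exact sequence $0\to M\to S/I\to S/I'\to 0$ into the long exact sequence in $\Tor_\bullet(-,\FF)$ and examine it one internal degree at a time. Because $\dim\Delta'=d-2$, Hochster's formula (\Cref{Hochster}) immediately gives $\beta_{i,i+j}(\FF[\Delta'])=0$ whenever $j\geq d$, which is the third case. For $j\leq d-1$ fix the internal degree $i+j$; since $\Tor_p(M,\FF)$ lives only in internal degree $p+d$, the terms $\Tor_i(M,\FF)_{i+j}$ and $\Tor_{i-1}(M,\FF)_{i+j}$ vanish unless $j=d$, respectively $j=d-1$. If $j<d-1$, both vanish and the long exact sequence forces $\Tor_i(\FF[\Delta])_{i+j}\cong\Tor_i(\FF[\Delta'])_{i+j}$, the first case. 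If $j=d-1$, then $\Tor_i(M,\FF)_{i+d-1}=0$, $\dim_{\FF}\Tor_{i-1}(M,\FF)_{i+d-1}=\binom{n-d}{i-1}f_{d-1}(\Delta)$, and moreover $\Tor_{i-1}(\FF[\Delta'])_{i+d-1}=0$ (this is $\beta_{i-1,(i-1)+d}(\FF[\Delta'])$, which vanishes by the third case applied with $i$ replaced by $i-1$) and $\Tor_{i-2}(M,\FF)_{i+d-1}=0$. Hence the long exact sequence collapses to the exact sequence
$$0\to\Tor_i(\FF[\Delta])_{i+d-1}\to\Tor_i(\FF[\Delta'])_{i+d-1}\to\Tor_{i-1}(M,\FF)_{i+d-1}\to\Tor_{i-1}(\FF[\Delta])_{i+d-1}\to 0,$$
and setting the alternating sum of dimensions equal to zero yields the middle case. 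The restriction $i\leq n-d+1$ simply records the range in which $\FF[\Delta']$, which is again Cohen-Macaulay, now of codimension $n-d+1$, can have nonzero graded Betti numbers.

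The step I expect to be the main obstacle is the structural description of $M=I'/I$: one has to verify carefully that each cyclic piece $\FF[x_k~:~k\in F]\cdot\overline{x_F}$ is genuinely free (here one uses that, as $\dim\Delta=d-1$, every $d$-subset in $\Delta$ is a facet) and that the submodules attached to different facets meet only in zero, and then keep the degree shift by $d$ straight throughout. Once $M$ is understood, the remainder is a routine diagram chase in the long exact sequence together with Hochster's formula and the elementary Betti numbers of a Koszul complex.
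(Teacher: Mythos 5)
The paper does not give a proof of this lemma; it is quoted as a special case of \cite[Theorem 3.1]{2015arXiv150205670R}, so there is no in-paper argument to compare yours against. Your proof is correct and self-contained. The structural identification of $M=I_{\Skel_{d-2}(\Delta)}/I_\Delta$ is the key step and you carry it out correctly: since $\dim\Delta=d-1$, a monomial lies in $I_{\Skel_{d-2}(\Delta)}\setminus I_\Delta$ exactly when its support is a $(d-1)$-dimensional face $F$, multiplication by any $x_k$ with $k\notin F$ kills the class, the monomials are partitioned by their support $F$ so the decomposition is a direct sum, and each summand is the Koszul quotient $(S/(x_k: k\notin F))(-d)$. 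The resulting concentration of $\Tor_\bullet(M,\FF)$ in internal degree $p+d$ makes the long exact sequence from $0\to M\to S/I_\Delta\to S/I_{\Skel_{d-2}(\Delta)}\to 0$ split into short pieces by internal degree, and your alternating-sum computation in degree $i+d-1$ reproduces the stated formula. One small observation: Cohen--Macaulayness is not actually used to derive the three displayed identities (your construction of $M$ and the degree bookkeeping only use $\dim\Delta=d-1$); it enters only in interpreting the range $0\leq i\leq n-d+1$, as you note, through the projective dimension of $\FF[\Skel_{d-2}(\Delta)]$.
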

	Applying \Cref{roksvold_verdure} iteratively, we obtain the following recursive formula for the graded Betti numbers of general skeleta of a Cohen-Macaulay complex:
\begin{corollary}\label{cor:skeletaBetti}
Let $s$ be a positive integer and let $\Delta$ be a $(d-1)$-dimensional Cohen-Macaulay complex with $f_0(\Delta)=n$. Set  $\Sigma=\Skel_{d-s-1}(\Delta)$. Then
	$$\beta_{i,i+j}\left( \mathbb{F}\left[ \Sigma\right] \right)=
	\begin{cases}
	\beta_{i,i+j}\left( \mathbb{F}\left[ \Delta\right] \right), &\mbox{ if } j<d-s\\
	\sum_{k=0}^s(-1)^k\beta_{i-k,i+d-s}\left( \mathbb{F}\left[ \Delta\right] \right)+\sum_{t=0}^{s-1}(-1)^{t-s+1}\binom{n-d+t}{i-s+t}f_{d-t-1}\left( \Delta\right),  & \mbox{ if }j=d-s\\
	0, & \mbox{ if }j\geq d-s+1
	\end{cases}$$
	for $0\leq i\leq n-d+s$.
\end{corollary}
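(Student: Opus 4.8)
The plan is to argue by induction on $s$, using \Cref{roksvold_verdure} as the only substantial input. Write $\Sigma_s:=\Skel_{d-s-1}(\Delta)$, so that $\Sigma_0=\Delta$ and $\Sigma_s$ has dimension $d-s-1$. Two facts are used repeatedly: skeleta of Cohen--Macaulay complexes are again Cohen--Macaulay (this is the general statement behind \Cref{GammaCM}, e.g.\ via Reisner's criterion), so every $\Sigma_s$ is Cohen--Macaulay; and, as long as $d-s-1\geq 0$, passing to a skeleton changes neither the vertex set nor the set of top-dimensional faces, so $f_0(\Sigma_s)=n$ and $f_{d-s-1}(\Sigma_s)=f_{d-s-1}(\Delta)$. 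The base case $s=1$ is exactly \Cref{roksvold_verdure}, since for $s=1$ the asserted formula reduces verbatim to its three cases.

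For the inductive step I would observe that $\Sigma_{s+1}=\Skel_{d-s-2}(\Delta)=\Skel_{(d-s)-2}(\Sigma_s)$, so that \Cref{roksvold_verdure}, applied to the $(d-s-1)$-dimensional Cohen--Macaulay complex $\Sigma_s$ (with ``$d$'' replaced by $d-s$, ``$n$'' unchanged), gives
$$\beta_{i,i+j}(\mathbb{F}[\Sigma_{s+1}])=\begin{cases}\beta_{i,i+j}(\mathbb{F}[\Sigma_s]), & j<d-s-1,\\[2pt] \beta_{i,i+d-s-1}(\mathbb{F}[\Sigma_s])-\beta_{i-1,i+d-s-1}(\mathbb{F}[\Sigma_s])+\binom{n-d+s}{i-1}f_{d-s-1}(\Delta), & j=d-s-1,\\[2pt] 0, & j\geq d-s.\end{cases}$$
Into the right-hand side I would substitute the inductive hypothesis for the Betti numbers of $\Sigma_s$. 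For $j<d-s-1$ (so $j<d-s$) the first branch of the hypothesis immediately yields $\beta_{i,i+j}(\mathbb{F}[\Delta])$, and the case $j\geq d-s$ is trivial. The only case with content is $j=d-s-1$: there $\beta_{i,i+d-s-1}(\mathbb{F}[\Sigma_s])$ still lies in the ``interior'' range of the hypothesis and equals $\beta_{i,i+d-s-1}(\mathbb{F}[\Delta])$, whereas $\beta_{i-1,i+d-s-1}(\mathbb{F}[\Sigma_s])=\beta_{i-1,(i-1)+(d-s)}(\mathbb{F}[\Sigma_s])$ sits precisely on the boundary strand $j=d-s$ and must be expanded via the full alternating-sum formula of the hypothesis.

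After this substitution I expect the $\beta(\mathbb{F}[\Delta])$-terms to assemble, after shifting the summation index by one, into $\sum_{k=0}^{s+1}(-1)^k\beta_{i-k,\,i+d-s-1}(\mathbb{F}[\Delta])$, and the face-number terms, together with the binomial term coming from \Cref{roksvold_verdure} (which turns out to be exactly the missing $t=s$ summand), to assemble into $\sum_{t=0}^{s}(-1)^{t-s}\binom{n-d+t}{i-s-1+t}f_{d-t-1}(\Delta)$; since $d-(s+1)=d-s-1$ and $t-(s+1)+1=t-s$, this is the claimed formula at level $s+1$. I would also check that the admissible range $0\leq i\leq n-(d-s)+1=n-d+s+1$ for \Cref{roksvold_verdure} applied to $\Sigma_s$ matches the asserted range at level $s+1$.

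I do not expect a conceptual obstacle: the real work sits in \Cref{roksvold_verdure}, and the rest is bookkeeping. The two places needing care are: (i) recognizing that, of the two Betti numbers of $\Sigma_s$ produced by \Cref{roksvold_verdure}, one is ``interior'' and one is a ``boundary'' term governed by the full inductive formula; and (ii) the sign-and-index shuffling of the two alternating sums, including the boundary value $i=n-d+s+1$, where one uses that both $\beta_{i,\bullet}(\mathbb{F}[\Delta])$ and $\beta_{i,\bullet}(\mathbb{F}[\Sigma_s])$ vanish (these being Cohen--Macaulay quotients of projective dimension $n-d$ and $n-d+s$ by Auslander--Buchsbaum). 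Step (ii) is the most error-prone, though still routine.
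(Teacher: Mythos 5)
Your proposal is correct and takes exactly the approach the paper intends: the paper itself offers no detailed proof, merely stating that the corollary follows by "applying \Cref{roksvold_verdure} iteratively," which is precisely the induction on $s$ that you have spelled out. Your bookkeeping — identifying that $\beta_{i,i+d-s-1}(\mathbb{F}[\Sigma_s])$ is an interior term while $\beta_{i-1,(i-1)+(d-s)}(\mathbb{F}[\Sigma_s])$ sits on the boundary strand, the index shift $k\mapsto k+1$ to absorb the alternating signs, the observation that the binomial term from \Cref{roksvold_verdure} supplies the $t=s$ summand, the use of $f_{d-s-1}(\Sigma_s)=f_{d-s-1}(\Delta)$, the CM-ness of skeleta, and the Auslander--Buchsbaum vanishing at $i=n-d+s+1$ — all checks out.
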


Since the clique complex $\Delta(K_{n_1,\ldots,n_d})$ is Cohen-Macaulay (see \Cref{GammaCM}), we can use \Cref{cor:skeletaBetti} to compute the graded Betti numbers of its skeleta. Combining this with \Cref{skeldpartite}, we obtain the following bounds for the graded Betti numbers of an arbitrary balanced simplicial complex.

\begin{corollary}
	Let $\Delta$ be a $(d-1)$-dimensional balanced simplicial complex on vertex set $V=\displaystyle\bigcupdot_{i=1}^d V_i$, with $n:=|V|$ and $n_i:=\left| V_i\right|$. Let $\Gamma=\Delta(K_{n_1,\dots,n_d})$. Then
	$$\beta_{i,i+j}\left( \mathbb{F}\left[ \Delta \right] \right)\leq \sum_{k=0}^{d-j}(-1)^k\beta_{i-k,i+j}\left( \mathbb{F}\left[ \Gamma\right] \right)+\sum_{t=0}^{d-j-1}(-1)^{t-d+j+1}\binom{n-d+t}{i-d+j+t}f_{d-t-1}\left( \Gamma\right).$$
\end{corollary}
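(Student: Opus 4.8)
The plan is to combine the general bound from \Cref{skeldpartite} with the explicit recursive formula for skeleta of Cohen--Macaulay complexes established in \Cref{cor:skeletaBetti}. The statement is essentially a direct substitution, so the ``proof'' amounts to checking that the hypotheses of the auxiliary results are met and that the indexing matches up.

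First I would recall that by \Cref{skeldpartite}, for every $i,j\geq 0$ we have
$$\beta_{i,i+j}\left(\mathbb{F}[\Delta]\right)\leq \beta_{i,i+j}\left(\mathbb{F}\left[\Skel_{j-1}\left(\Delta(K_{n_1,\dots,n_d})\right)\right]\right).$$
Next, I would invoke \Cref{GammaCM}, which tells us that $\Gamma=\Delta(K_{n_1,\dots,n_d})$ is a $(d-1)$-dimensional Cohen--Macaulay complex; its number of vertices is $f_0(\Gamma)=n_1+\dots+n_d=n$. We may therefore apply \Cref{cor:skeletaBetti} with $\Delta$ replaced by $\Gamma$ and with $s$ chosen so that $d-s-1=j-1$, i.e.\ $s=d-j$ (for $1\leq j\leq d-1$, so that $s$ is a positive integer; the cases $j=0$ and $j=d$ are either trivial or covered by \Cref{skeldpartite} directly). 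With this choice, $d-s=j$, so the relevant case of \Cref{cor:skeletaBetti} is the ``$j=d-s$'' branch, which reads
$$\beta_{i,i+j}\left(\mathbb{F}\left[\Skel_{j-1}(\Gamma)\right]\right)=\sum_{k=0}^{d-j}(-1)^k\beta_{i-k,i+j}\left(\mathbb{F}[\Gamma]\right)+\sum_{t=0}^{d-j-1}(-1)^{t-d+j+1}\binom{n-d+t}{i-d+j+t}f_{d-t-1}(\Gamma),$$
which is precisely the right-hand side in the statement. Chaining the equality with the inequality above yields the claim.

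The only genuinely delicate point is bookkeeping: one must check that the range $0\leq i\leq n-d+s=n-j$ in \Cref{cor:skeletaBetti} does not shrink the set of $i$ for which the bound is asserted (for $i$ outside this range all terms vanish anyway, since $\Skel_{j-1}(\Gamma)$ has projective dimension at most $n-j$, so the stated inequality still holds trivially), and that the substitution $s\mapsto d-j$ correctly transforms the two sums. I would also note explicitly that for $j=d$ the skeleton $\Skel_{d-1}(\Gamma)=\Gamma$ and the formula degenerates to $\beta_{i,i+d}(\mathbb{F}[\Delta])\leq\beta_{i,i+d}(\mathbb{F}[\Gamma])$, consistent with the $s=0$ (empty-sum) reading. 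No step is hard; the main obstacle is simply presenting the index shift transparently enough that the reader can match it against \Cref{cor:skeletaBetti} without re-deriving it.
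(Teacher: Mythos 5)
Your proposal is correct and follows exactly the route the paper takes: bound $\beta_{i,i+j}(\FF[\Delta])$ via \Cref{skeldpartite} by the corresponding Betti number of $\Skel_{j-1}(\Gamma)$, observe via \Cref{GammaCM} that $\Gamma$ is Cohen--Macaulay, and then substitute $s=d-j$ into \Cref{cor:skeletaBetti}. The index bookkeeping you carry out is the entire content of the argument, and it matches the paper's (very brief) justification.
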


Note that the graded Betti numbers of $\Gamma:=\Delta(K_{n_1,\ldots,n_d})$ are given in \Cref{ugly} and that the $f$-vector of $\Gamma$ is given by
$$
f_i(\Gamma)=\sum_{I\subseteq [d],|I|=i+1}\prod_{\ell\in I}n_\ell
$$
for $0\leq i\leq d-1$. Therefore, the previous corollary really provides explicit bounds for the graded Betti numbers of a balanced simplicial complex.

\section{A first bound in the Cohen-Macaulay case}\label{sect:FirstBound}
We let $S=\FF[x_1,\ldots,x_n]$ denote the polynomial ring in $n$ variables over an arbitrary field $\FF$. 
The ultimate aim of this section is to show upper bounds for the graded Betti numbers of the Stanley-Reisner rings of balanced Cohen-Macaulay complexes. On the way, more generally, we will prove upper bounds for the graded Betti numbers of Artinian quotients $S/I$, where $I \subseteq S$ is a homogeneous ideal having \emph{many} generators in degree $2$.

\subsection{Ideals with many generators in degree $2$}
Throughout this section, we let $I\subsetneq S$ be a homogeneous ideal that has no generators in degree $1$, i.e., $I\subseteq\mathfrak{m}^2$. 

First assume that $S/I$ is of dimension $0$. 
It is  well-known and essentially follows from \Cref{lemma:Bigatti} by passing to the lex ideal $I^{\lex}$, that we can bound $\beta_{i,i+j}(S/I)$ by the corresponding Betti number  $\beta_{i,i+j}(S/\mathfrak{m}^{j+1})$ of the quotient of $S$ with the $(j+1)$\textsuperscript{st} power of the maximal homogeneous ideal $\mathfrak{m}\subseteq S$. \Cref{lemma:eliker} then yields
$$
\beta_{i,i+j}(S/I)\leq \binom{i-1+j}{j}\binom{n+j}{i+j}
$$
for all $i\geq 1$, $j\geq0$. Moreover, if $S/I$ is Cohen-Macaulay of dimension $d$, then, by modding out a linear system of parameters $\Theta\subseteq S$ (which is a regular sequence by assumption) and using \Cref{technicalbetti}, we can reduce to the $0$-dimensional case, which yields the well-known upper bound (see e.g., \cite[Lemma 3.4 (i)]{MUR}):
$$
\beta_{i,i+j}(S/I)\leq 
\binom{i-1+j}{j}\binom{n-d+j}{i+j},$$
for all $i\geq 1$, $j\geq0$. In particular, those bounds apply to Stanley-Reisner rings of Cohen-Macaulay complexes.   Moreover, if equality holds in the $j$\textsuperscript{th} strand, then $I$ has $(j+1)$-linear resolution (see e.g., \cite{HH} for the precise definition).  

In the following, assume that $S/I$ is Artinian and that there exists a positive integer $b$ such that
$$
\dim_\FF(S/I)_2\leq \binom{n+1}{2}-b.
$$
In other words, $I$ has at least $b$ generators in degree $2$. Our goal is to prove upper bounds for $\beta_{i,i+j}(S/I)$ in this setting. This will be achieved using similar arguments as the ones we just recalled that are used in the general setting. First, we need some preparations.

As, by assumption, $I$ does not contain polynomials of degree $1$, neither does its lex ideal $I^{\lex}\subseteq S$. In particular, we have 
$$
|G(I^\lex)\cap S_2|\geq b
$$
and $I^\lex$ contains at least the $b$ \emph{largest} monomials of degree $2$  in lexicographic order. The next lemma describes this set of monomials explicitly. 

\begin{lemma}\label{p and q}
Let $n\in \NN$ be a positive integer and let $b<\binom{n+1}{2}$. Let $x_px_q$  be the $b$\textsuperscript{th} largest monomial in the lexicographic order of degree $2$ monomials in variables $x_1,\ldots,x_n$ and assume $p\leq q$. Then:
 $$p=n-\left\lfloor\dfrac{\sqrt{-8b+4n(n+1)+1}}{2}-\frac{1}{2}\right\rfloor,$$
	and
	$$q=b+\dfrac{(p-1)(p-2n)}{2}.$$
\end{lemma}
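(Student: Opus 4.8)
The plan is to count monomials explicitly. Order the degree-$2$ monomials in $x_1,\dots,x_n$ lexicographically. The monomials with smallest variable $x_i$ (i.e. those of the form $x_ix_\ell$ with $i\le\ell$) form a consecutive block of size $n-i+1$, and these blocks appear in order of increasing $i$. So the largest $b$ monomials are: all $n-i+1$ monomials for $i=1,\dots,p-1$, together with the first $q-p+1$ monomials in the block for $i=p$ (namely $x_p^2,x_px_{p+1},\dots,x_px_q$). Thus $x_px_q$ being the $b$\textsuperscript{th} largest monomial is equivalent to
\begin{equation*}
\sum_{i=1}^{p-1}(n-i+1)<b\le\sum_{i=1}^{p}(n-i+1).
\end{equation*}

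First I would evaluate the partial sum $T(p):=\sum_{i=1}^{p}(n-i+1)=pn-\binom{p}{2}=\frac{p(2n-p+1)}{2}$, so the condition becomes $T(p-1)<b\le T(p)$, i.e. $p$ is the unique integer with this property. To extract the closed form for $p$, I would solve $T(p)\ge b$, i.e. the quadratic inequality $-p^2+(2n+1)p-2b\ge 0$ in $p$; its larger root is $\frac{2n+1+\sqrt{(2n+1)^2-8b}}{2}=n+\frac12+\frac{\sqrt{4n^2+4n+1-8b}}{2}$, and $p$ is the least integer with $T(p)\ge b$. Rearranging, $p=n-\big\lceil\frac{\sqrt{4n(n+1)+1-8b}}{2}-\frac12\big\rceil$; the tiny point to check is that this ceiling equals the floor expression $\left\lfloor\frac{\sqrt{-8b+4n(n+1)+1}}{2}-\frac12\right\rfloor$ written in the lemma — this holds because the radicand $4n(n+1)+1-8b$ is odd, so its square root is irrational unless it is an odd perfect square, and one checks the boundary case separately (when $b=T(p)$ exactly, the argument of the floor/ceiling is a half-integer and both conventions must be reconciled). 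Once $p$ is pinned down, $q$ comes for free: the number of monomials strictly above $x_p^2$ is $T(p-1)$, and $x_px_q$ is the $(q-p+1)$\textsuperscript{th} monomial in the $p$-block, so $b=T(p-1)+(q-p+1)$, giving $q=b-T(p-1)+p-1=b+(p-1)-\frac{(p-1)(2n-p+2)}{2}=b+\frac{(p-1)(p-2n)}{2}$, which is exactly the claimed formula.

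I expect the main obstacle to be purely bookkeeping: matching the floor/ceiling in my derivation with the floor written in the statement, and verifying the edge behavior when $b$ hits a block boundary $T(p)$ exactly, so that off-by-one errors in which inequalities are strict do not creep in. The algebra relating $T(p-1)$ to the stated value of $q$ is routine but must be done carefully, since a sign slip there propagates. No deeper idea is needed — the whole statement is a careful unwinding of the definition of the lex order on degree-$2$ monomials together with solving one quadratic.
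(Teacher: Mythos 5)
Your approach is essentially the same as the paper's: both count degree-$2$ monomials in blocks indexed by the smaller variable, characterize $p$ by a pair of inequalities for partial sums, and invert a quadratic to get the closed form. The paper counts from the bottom ($n-p$ is the largest $s$ with $\sum_{\ell=1}^{s}\ell\le\binom{n+1}{2}-b$); you count from the top ($T(p-1)<b\le T(p)$). These are equivalent, and your derivation of $q$ from $b=T(p-1)+(q-p+1)$ is correct and matches the paper's.

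There is, however, a local slip in the extraction of $p$ that, as written, makes the argument incorrect. You correctly reduce to $p=\lceil r_-\rceil$ with $r_-=n+\tfrac12-\tfrac{\sqrt{4n(n+1)+1-8b}}{2}$. The standard identity $\lceil n-u\rceil=n-\lfloor u\rfloor$ (for $n\in\ZZ$) then gives $p=n-\bigl\lfloor\tfrac{\sqrt{4n(n+1)+1-8b}}{2}-\tfrac12\bigr\rfloor$ immediately, i.e.\ exactly the floor in the lemma, with no case analysis. You instead obtained $p=n-\lceil\cdot\rceil$ and then tried to argue the ceiling equals the floor. That reconciliation runs backwards: irrationality of the radicand (which is what the oddness argument actually yields away from perfect squares) implies $u:=\tfrac{\sqrt{\cdot}}{2}-\tfrac12$ is not an integer, hence $\lceil u\rceil=\lfloor u\rfloor+1\neq\lfloor u\rfloor$. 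And in the perfect-square case, the radicand is an odd square, so $u$ is an integer — never the half-integer you worried about — whence the ``boundary case'' you flag does not arise. Replace the ceiling by the floor via the identity above and drop the reconciliation paragraph; everything else is sound.
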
	

Since the proof of this lemma is technical and since the precise statement is not used later, we defer its proof to the appendix.

Intuitively, if a lex ideal $J\subseteq S$ has \emph{many} generators in degree $2$, then there can only exist relatively few generators of higher degree. More precisely, the next lemma provides a necessary condition for a monomial $u$ to lie in $G(J)_j$ for $j>2$ and thus enables us to bound the number of generators of $J$ of degree $j$. 

\begin{lemma}\label{subset}
	Let $j>2$ be an integer and let $J\subseteq S$ be a lex ideal. Let $x_px_q$ be the lexicographically smallest monomial of degree $2$ that is contained in $J$. If $u\in G(J)_j$ is a minimal generator of $J$ of degree $j$, then $u<_\lex x_px_qx_{n}^{j-2}$. In other words:
	$$
	G(J)_j\subseteq \Mon_j(S)_{<_\lex x_px_qx_{n}^{j-2}}.
	$$
\end{lemma}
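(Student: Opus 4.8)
The plan is to prove the contrapositive-flavored statement directly from the defining property of a lex ideal together with the hypothesis that $x_px_q$ is already the \emph{smallest} degree-$2$ monomial in $J$. The point is that being a lex ideal is an extremely rigid condition: once $x_px_q\in J$, every degree-$2$ monomial that is $\geq_\lex x_px_q$ lies in $J$ as well, and hence any monomial of degree $j$ that is divisible by such a monomial lies in $J$ but \emph{cannot} be a minimal generator (it is a multiple of a degree-$2$ element of $J$, namely a proper multiple since $j>2$).

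\textbf{Key steps.} First I would fix $u\in G(J)_j$ with $j>2$ and write $u=x_{a_1}x_{a_2}\cdots$ with the indices listed in weakly increasing order, so that $x_{a_1}x_{a_2}$ is the "lex-largest" degree-$2$ divisor of $u$ (the product of its two smallest-index variables). Since $u\in J$ and $J$ is a monomial ideal, $u$ is divisible by some minimal generator of $J$; if that generator had degree $2$, then $u$ would be a proper multiple of a degree-$2$ element of $J$ and hence not in $G(J)_j$ — contradiction. So I may assume no degree-$2$ divisor of $u$ lies in $J$; in particular $x_{a_1}x_{a_2}\notin J$. By the lex property and the minimality of $x_px_q$ among degree-$2$ monomials of $J$, this forces $x_{a_1}x_{a_2}<_\lex x_px_q$, i.e. $x_{a_1}x_{a_2}$ is strictly lex-smaller than $x_px_q$. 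Concretely this means either $a_1>p$, or ($a_1=p$ and $a_2>q$).

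\textbf{Finishing.} Now I would compare $u$ with $x_px_qx_n^{j-2}$ in the lex order. Since all remaining variables of $u$ (the factors beyond $x_{a_1}x_{a_2}$) have index at least $a_2$, and $x_n^{j-2}$ is the lex-smallest possible "tail" of degree $j-2$, the monomial $u$ is lex-at-most $x_{a_1}x_{a_2}x_n^{j-2}$. Combining with $x_{a_1}x_{a_2}<_\lex x_px_q$ (which, since the tails $x_n^{j-2}$ are identical and sit in the lowest-priority variable slots, propagates to $x_{a_1}x_{a_2}x_n^{j-2}<_\lex x_px_qx_n^{j-2}$), I get $u\leq_\lex x_{a_1}x_{a_2}x_n^{j-2}<_\lex x_px_qx_n^{j-2}$, which is exactly the claim. (The one degenerate case to check is $q=n$, where $x_px_qx_n^{j-2}=x_px_n^{j-1}$; the same comparison goes through since then $x_{a_1}x_{a_2}<_\lex x_px_n$ already forces $a_1<p$ or the tail comparison, handled identically.)

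\textbf{Main obstacle.} I don't expect a real obstacle here — the argument is essentially a careful bookkeeping with the lex order and the observation that a minimal generator of degree $>2$ cannot be divisible by any degree-$2$ element of the ideal. The only subtlety is making the lex comparison $x_{a_1}x_{a_2}<_\lex x_px_q \Rightarrow x_{a_1}x_{a_2}x_n^{j-2}<_\lex x_px_qx_n^{j-2}$ airtight: one must note that appending the \emph{same} monomial $x_n^{j-2}$ to both sides preserves the lex inequality, which is immediate since the leftmost differing exponent between the two degree-$2$ parts is unaffected by the common tail (and the tail sits in the variable $x_n$ of lowest priority). I would state this as a one-line remark rather than dwell on it.
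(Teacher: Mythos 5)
Your argument is correct up to and including the key observation that $x_{a_1}x_{a_2}\notin J$ and hence $x_{a_1}x_{a_2}<_\lex x_px_q$ (equivalently, $a_1>p$, or $a_1=p$ and $a_2>q$); that is indeed the heart of the matter. But the step where you compare $u$ with $x_{a_1}x_{a_2}x_n^{j-2}$ contains a reversed inequality that breaks the chain: you claim $u\leq_\lex x_{a_1}x_{a_2}x_n^{j-2}$, but since the tail $x_{a_3}\cdots x_{a_j}$ of $u$ is lex-\emph{greater} than or equal to the lex-smallest tail $x_n^{j-2}$, and multiplication by the fixed monomial $x_{a_1}x_{a_2}$ preserves lex order, one actually gets $u\geq_\lex x_{a_1}x_{a_2}x_n^{j-2}$. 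For a concrete failure, take $n=4$, $j=3$, $u=x_2x_3^2$ (so $a_1=2$, $a_2=a_3=3$): then $u$ has exponent vector $(0,1,2,0)$ while $x_{a_1}x_{a_2}x_n^{j-2}=x_2x_3x_4$ has $(0,1,1,1)$, so $u>_\lex x_2x_3x_4$. Your chain thus collapses to $u\geq_\lex x_{a_1}x_{a_2}x_n^{j-2}<_\lex x_px_qx_n^{j-2}$, which proves nothing.

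The fix stays close to your approach. Unwind $x_{a_1}x_{a_2}<_\lex x_px_q$ into the two cases you already identified and compare $u$ with $w:=x_px_qx_n^{j-2}$ directly. If $a_1>p$, then every $a_i\geq a_1>p$, so $u$ has exponent $0$ in position $p$ while $w$ has a positive exponent there, giving $u<_\lex w$. If $a_1=p$ and $a_2>q$, then $a_i\geq a_2>q>p$ for all $i\geq 2$, so the exponent of $x_p$ in $u$ equals $1$ (matching $w$), $u$ has exponent $0$ in positions $p+1,\ldots,q$, and $w$ has exponent $1$ in position $q$ --- again $u<_\lex w$. (If you prefer an intermediate monomial, the correct lex \emph{upper} bound for $u$ given its two smallest indices is $x_{a_1}x_{a_2}^{j-1}$, not $x_{a_1}x_{a_2}x_n^{j-2}$; one then checks $x_{a_1}x_{a_2}^{j-1}<_\lex w$ in the same two cases.) The paper's proof is essentially the contrapositive of this case analysis: starting from $u\geq_\lex w$ it exhibits a degree-$2$ divisor of $u$ lying in $J$.
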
	
\begin{proof}
To simplify notation we set $w=x_px_qx_n^{j-2}$. First note that any monomial of degree $j$ that is divisible by $x_px_q\in G(J)$ cannot be a minimal generator of $J$. Let $u$ be a monomial of degree $j$ with $u>_\lex w$, that is not divisible by $x_px_q$. Then, there exists $\ell<p$ such that $x_\ell$ divides $u$ or $u$ is divisible by $x_p$ and there exists $p\leq r<q-1$ such that $x_px_r$ divides $u$. In the first case, let $x_r$ such that $x_\ell x_r$ divides $u$. Then, $x_\ell x_r>_\lex x_px_q$ and hence $x_\ell x_r\in J$, since $J$ is a lex ideal. This implies $u\notin G(J)$.
Similarly, in the second case, we have $x_px_r>_\lex x_px_q\in G(J)$ and hence 
 $u\notin G(J)$. The claim follows.
\end{proof}

Recall that a homogeneous ideal $I\subseteq S$, which is generated in degree $d$, is called \emph{Gotzmann ideal} if the number of generators of $\mathfrak{m}I$ is smallest possible. More generally, a graded ideal $I\subseteq S$ is called \emph{Gotzmann ideal} if all components $I_{\langle j\rangle}$ are Gotzmann ideals. Here, $I_{\langle j\rangle}$ denotes the ideal generated by all the elements in $I$ of degree $j$. By Gotzmann's persistence theorem \cite{Gotzmann}, a graded ideal $I\subseteq S$ is Gotzmann if and only if $I$ and $(I^\lex)_{\langle d\rangle}$ have the same Hilbert function. Moreover, as shown in \cite[Corollary 1.4]{HerzogHibi:ComponentwiseLinear}, this is equivalent to $S/I$ and $S/I^\lex$ having the same graded Betti numbers,
i.e., 
\begin{equation}
\label{eq:Gotzmann}
\beta_{i,i+j}(S/I)=\beta_{i,i+j}(S/I^\lex)
\end{equation}
for all $i,j\geq 0$. 
We state an easy lemma, which well be helpful to prove the main result of this section.

\begin{lemma}\label{prop:Gotzmann}
Let $j\geq d$ be positive integers and let $J\subseteq S$ be a Gotzmann ideal that is generated in degree $d$. Let $I=J+\mathfrak{m}^{j+1}$. Then 
\begin{equation}\label{eq:equality}
\beta_{i,i+\ell}(S/I)=\beta_{i,i+\ell}(S/I^\lex)
\end{equation}
for all $i,\ell\geq 0$. 
\end{lemma}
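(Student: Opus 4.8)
The plan is to reduce the claimed equality of all graded Betti numbers to the Gotzmann criterion \eqref{eq:Gotzmann}, i.e. to show that $I=J+\mathfrak{m}^{j+1}$ is itself a Gotzmann ideal, or at least that $S/I$ and $S/I^\lex$ have the same graded Betti numbers. Since $I$ is not generated in a single degree, I cannot invoke \cite[Corollary 1.4]{HerzogHibi:ComponentwiseLinear} verbatim; instead I would work strand by strand, using that the two sources of generators of $I$ sit in disjoint degrees once we pass far enough up: $J$ contributes in degree $d$ (and nothing of lower degree, since $J$ is generated in degree $d$), and $\mathfrak{m}^{j+1}$ contributes everything from degree $j+1$ on. The key structural observation is that $I$ is \emph{componentwise linear}: in degree $d$ the component $I_{\langle d\rangle}=J$ has a linear resolution because $J$ is Gotzmann hence has the same Betti numbers as its lex ideal $J^\lex$, which (being generated in one degree and lex) has a $d$-linear resolution; in degrees between $d$ and $j+1$ nothing new is generated; and in degree $j+1$ and above the relevant component contains $\mathfrak{m}^{j+1}$, whose powers have linear resolutions.

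Concretely, first I would compute $I^\lex$. I claim $I^\lex = J^\lex + \mathfrak{m}^{j+1}$. This is because the Hilbert function of $S/I$ in degrees $\le j$ agrees with that of $S/J$ (as $\mathfrak{m}^{j+1}$ only affects degrees $\ge j+1$), which by the Gotzmann property of $J$ agrees with the Hilbert function of $S/J^\lex$ in all degrees $\ge d$, and in particular in degrees $d\le \ell\le j$; and in degrees $\ge j+1$ both $S/I$ and $S/(J^\lex+\mathfrak{m}^{j+1})$ vanish. Since $J^\lex+\mathfrak{m}^{j+1}$ is a lex ideal with the same Hilbert function as $I$, uniqueness of the lex ideal gives $I^\lex=J^\lex+\mathfrak{m}^{j+1}$. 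Next I would invoke \Cref{lemma:Bigatti} for the inequality $\beta_{i,i+\ell}(S/I)\le\beta_{i,i+\ell}(S/I^\lex)$ and reduce to proving the reverse inequality, equivalently equality of total Betti numbers in each degree.

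For the reverse direction the cleanest route is a Tor/mapping-cone argument exploiting the decomposition $I = J + \mathfrak{m}^{j+1}$ together with $I^\lex = J^\lex + \mathfrak{m}^{j+1}$ and the analogous decomposition $J^\lex$ is Gotzmann. One sets up the short exact sequences $0\to J\cap\mathfrak{m}^{j+1}\to J\oplus\mathfrak{m}^{j+1}\to I\to 0$ and its lex counterpart, and observes that $J\cap\mathfrak{m}^{j+1}=\mathfrak{m}^{j+1}$-part-of-$J$, which is generated in degree $j+1$ (it equals $J_{j+1}\cdot$-generated ideal, since everything of $J$ in degree $\ge j+1$ lies in $\mathfrak{m}^{j+1}$ once $J\subseteq\mathfrak{m}^d$ and $j\ge d$... here one must be slightly careful: $J\cap\mathfrak{m}^{j+1}$ is the ideal of elements of $J$ lying in $\mathfrak{m}^{j+1}$, which is $\mathfrak{m}^{j+1-d}J$ need not hold, but its generators do sit in degree $\ge j+1$). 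Using that $J$, $J^\lex$, $\mathfrak{m}^{j+1}$ and $J\cap\mathfrak{m}^{j+1}$ vs. $J^\lex\cap\mathfrak{m}^{j+1}$ pairwise have equal graded Betti numbers (the first pair by Gotzmann, the $\mathfrak{m}^{j+1}$ terms being literally equal, and the intersections by a Hilbert-function-plus-componentwise-linearity comparison), the long exact sequences in Tor force $\beta_{i,i+\ell}(S/I)=\beta_{i,i+\ell}(S/I^\lex)$. The main obstacle I anticipate is precisely controlling $J\cap\mathfrak{m}^{j+1}$ and its lex analogue — showing these two ideals have the same graded Betti numbers — and checking that the connecting maps in the two long exact sequences are forced to have the same ranks; here the componentwise-linear structure (each of the four ideals has a resolution linear in each generating degree) is what makes the rank comparison automatic, since for componentwise linear ideals the Betti numbers are pinned down by the Hilbert function via the Aramova–Herzog–Hibi/Herzog–Hibi theory. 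A shorter alternative, which I would try first, is: each of $J$, $\mathfrak{m}^{j+1}$ is componentwise linear with known Betti numbers, their sum $I$ has $I_{\langle\ell\rangle}$ componentwise linear for every $\ell$ (sums of componentwise linear ideals are componentwise linear by Herzog–Hibi), hence $I$ is componentwise linear, hence $\beta_{i,i+\ell}(S/I)$ depends only on the Hilbert function, which equals that of $S/I^\lex$ — done.
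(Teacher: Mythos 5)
Your first instinct---reduce to the Gotzmann criterion \eqref{eq:Gotzmann}---is exactly the route the paper takes, but you then talk yourself out of it for a spurious reason, and neither of your two alternative routes is sound.

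The claim ``Since $I$ is not generated in a single degree, I cannot invoke \cite[Corollary 1.4]{HerzogHibi:ComponentwiseLinear} verbatim'' is where the argument goes off the rails. The paper defines ``Gotzmann'' for \emph{arbitrary} graded ideals by requiring all graded components $I_{\langle \ell\rangle}$ to be Gotzmann, and it is precisely in this generality that \cite[Corollary 1.4]{HerzogHibi:ComponentwiseLinear} applies: a graded ideal $I$ is Gotzmann (in the componentwise sense) if and only if $\beta_{i,i+\ell}(S/I)=\beta_{i,i+\ell}(S/I^{\lex})$ for all $i,\ell$. So the whole proof is just the observation that $I=J+\mathfrak m^{j+1}$ is Gotzmann: for $\ell\leq j$ one has $I_\ell=J_\ell$ (since $(\mathfrak m^{j+1})_\ell=0$), so $I_{\langle\ell\rangle}=J_{\langle\ell\rangle}$, which is Gotzmann by Gotzmann persistence applied to $J$; and for $\ell\geq j+1$ one has $I_\ell=S_\ell$, so $I_{\langle\ell\rangle}=\mathfrak m^{\ell}$, which is Gotzmann. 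Having missed this, you take detours that don't close.

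The ``shorter alternative'' you offer at the end is actually false. Componentwise linearity of $I$ plus equality of the Hilbert functions of $S/I$ and $S/I^{\lex}$ (which is automatic) does \emph{not} force equality of graded Betti numbers. For instance, in $S=\FF[x,y,z]$, the ideal $I=(x,y)^2=(x^2,xy,y^2)$ is componentwise linear and has lex ideal $I^{\lex}=(x^2,xy,xz,y^3)$, but $\beta_{0,3}(S/I)=0\neq 1=\beta_{0,3}(S/I^{\lex})$. What componentwise linearity gives you is $\beta_{i,j}(I)=\beta_{i,j}(\Gin(I))$, not $\beta_{i,j}(I)=\beta_{i,j}(I^{\lex})$; these differ whenever $\Gin(I)\neq I^{\lex}$, and the gap is closed exactly by the Gotzmann condition. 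Finally, the mapping-cone route around $0\to J\cap\mathfrak m^{j+1}\to J\oplus\mathfrak m^{j+1}\to I\to 0$ is left open where it is hardest: you neither establish that $J\cap\mathfrak m^{j+1}$ and $J^{\lex}\cap\mathfrak m^{j+1}$ have equal graded Betti numbers nor control the ranks of the connecting homomorphisms in the two long exact sequences, and the appeal to componentwise linearity to ``make the rank comparison automatic'' has the same gap as above. The computation $I^{\lex}=J^{\lex}+\mathfrak m^{j+1}$ is correct and potentially useful, but the rest needs to be replaced by the short Gotzmann argument.
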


\begin{proof}
We first note that, as $J$ is Gotzmann, so are its graded components $I_{\langle j\rangle}$. Moreover, as any power of $\mathfrak{m}$ is Gotzmann, it follows from the definition of a Gotzmann ideal that $I$ has to be Gotzmann as well. The claim now follows from \cite[Corollary 1.4]{HerzogHibi:ComponentwiseLinear}.
\end{proof}
We can now state the main result of this section.

\begin{theorem}\label{thm:BettiCM}
Let $I\subseteq S $ be a homogeneous ideal, that does not contain linear forms. Let $\dim_\FF (S/I)_2\leq\binom{n+1}{2}-b$ for some positive integer $b$. Let $x_px_q$, where $p\leq q$,  be the $b$\textsuperscript{th} largest monomial of degree $2$ in lexicographic order on $S$. Then
\begin{equation}\label{eq:BettiBoundCM}
\beta_{i,i+j}(S/I)\leq \displaystyle\sum_{\ell=p+1}^{n}\binom{\ell-p+j-1}{j}\binom{\ell-1}{i-1}+\displaystyle\sum_{\ell=q+1}^{n}\binom{\ell-q+j-2}{j-1}\binom{\ell-1}{i-1},
\end{equation}
	for any $i\geq 0$ and $j\geq 2$. Moreover if $I=J+\mathfrak{m}^{j+1}$, where $J\subseteq S$ is a Gotzmann ideal that is generated by $b$ elements of degree $2$, then equality is attained for a fixed $j\geq 2$ and all $i\geq 0$.
\end{theorem}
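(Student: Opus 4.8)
The plan is to bound $\beta_{i,i+j}(S/I)$ by passing to the lex ideal $I^\lex$ via \Cref{lemma:Bigatti}, then use the Eliahou--Kervaire formula (\Cref{lemma:eliker}) to compute $\beta_{i,i+j}(S/I^\lex)$ in terms of the minimal generators of $I^\lex$ of degree $j+1$, and finally estimate the number of such generators using the structural constraints imposed by the fact that $I^\lex$ contains the $b$ largest degree-$2$ monomials. Concretely, let $x_px_q$ (with $p\le q$) be the $b$\textsuperscript{th} largest monomial of degree $2$; since $I$ has no linear forms, neither does $I^\lex$, and $I^\lex$ contains at least the $b$ largest degree-$2$ monomials, so the lexicographically smallest degree-$2$ monomial of $I^\lex$ is lex-at-most $x_px_q$. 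By \Cref{subset}, every $u\in G(I^\lex)_{j+1}$ satisfies $u<_\lex x_px_qx_n^{j-1}$ (applying the lemma with $j$ replaced by $j+1$), i.e. $G(I^\lex)_{j+1}\subseteq \Mon_{j+1}(S)_{<_\lex x_px_qx_n^{j-1}}$.

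Next I would feed this into Eliahou--Kervaire:
\[
\beta_{i,i+j}(S/I^\lex)=\sum_{u\in G(I^\lex)\cap S_{j+1}}\binom{\max(u)-1}{i-1}
\le \sum_{\substack{u\in \Mon_{j+1}(S)\\ u<_\lex x_px_qx_n^{j-1}}}\binom{\max(u)-1}{i-1},
\]
where the inequality uses that $\binom{\max(u)-1}{i-1}\ge 0$ and that the generators form a subset of the indicated monomial set. The key combinatorial step is then to enumerate the monomials $u$ of degree $j+1$ with $u<_\lex x_px_qx_n^{j-1}$, organized by the value $m=\max(u)$. A monomial $u$ with $\max(u)=m$ is lex-smaller than $x_px_qx_n^{j-1}$ iff either $\min(u)>p$ (the first variable dividing $u$ has index exceeding $p$), or $\min(u)=p$ and the ``second variable'' of $u$ has index exceeding $q$ (this is precisely the dichotomy appearing in the proof of \Cref{subset}). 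Counting degree-$(j+1)$ monomials with smallest variable index $>p$ and largest variable index exactly $m$ gives $\binom{m-p+j-1}{j}$-type contributions (a weak composition of the remaining degree into variables strictly between $p+1$ and $m$, with the top variable present), summed over $m=p+1,\dots,n$; and counting those with smallest index $=p$, second-smallest index $>q$, and top index $m$ gives the $\binom{m-q+j-2}{j-1}$ contributions summed over $m=q+1,\dots,n$. Matching these two families with $\binom{\ell-1}{i-1}$ weights against the two sums in \eqref{eq:BettiBoundCM} yields the bound.

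For the equality statement, suppose $I=J+\mathfrak{m}^{j+1}$ with $J$ Gotzmann generated by $b$ degree-$2$ elements. By \Cref{prop:Gotzmann} (its hypothesis $j\ge d=2$ holds since $j\ge 2$), we have $\beta_{i,i+\ell}(S/I)=\beta_{i,i+\ell}(S/I^\lex)$ for all $i,\ell\ge0$; in particular for $\ell=j$. So it suffices to check that $I^\lex$ realizes equality in the monomial count above, i.e. that $G(I^\lex)_{j+1}$ is \emph{exactly} the set of degree-$(j+1)$ monomials lex-smaller than $x_px_qx_n^{j-1}$. Since $J$ Gotzmann generated by $b$ quadrics forces $J^\lex$ to be the lex ideal on exactly the $b$ largest degree-$2$ monomials, the smallest degree-$2$ generator of $I^\lex$ is exactly $x_px_q$; then $(I^\lex)_{j+1}$ is spanned by all degree-$(j+1)$ monomials $\ge_\lex$ the $b$\textsuperscript{th}-from-appropriate boundary, and one checks the minimal generators among these (those not divisible by a degree-$2$ element of $I^\lex$) are precisely the monomials strictly lex-below $x_px_qx_n^{j-1}$ — this is the converse direction to \Cref{subset}, which holds because the constraints there are sharp for the \emph{lex} ideal. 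Tracking the $\mathfrak{m}^{j+1}$ part contributes nothing new in degree $j+1$ (all degree-$(j+1)$ monomials are already in it, but they are accounted for by the same count).

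\textbf{Main obstacle.} The delicate point is the exact enumeration of $\{u\in\Mon_{j+1}(S):u<_\lex x_px_qx_n^{j-1}\}$ by value of $\max(u)$, and verifying that the two resulting binomial sums coincide with the right-hand side of \eqref{eq:BettiBoundCM}; this is a careful but routine lex-order bookkeeping argument (one must be careful about the boundary cases $p=q$ and about whether the ``second variable'' genuinely exceeds $q$ versus equals $q$). The second genuinely substantive point is the converse to \Cref{subset} needed for the equality case: showing that for the lex ideal with smallest quadric exactly $x_px_q$, \emph{every} degree-$(j+1)$ monomial lex-below $x_px_qx_n^{j-1}$ that is not divisible by $x_px_q$ actually fails to be divisible by any degree-$2$ element of $I^\lex$, hence is a minimal generator — here one uses that below $x_px_q$ there are no other degree-$2$ monomials of $I^\lex$, so divisibility by a quadric of $I^\lex$ can only come from a quadric $\ge_\lex x_px_q$, and the monomials in question are constructed precisely to avoid those.
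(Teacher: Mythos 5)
Your proposal matches the paper's proof in all essentials: pass to $I^\lex$ via \Cref{lemma:Bigatti}, apply Eliahou--Kervaire (\Cref{lemma:eliker}), use \Cref{subset} (shifted to degree $j+1$) to get $G(I^\lex)_{j+1}\subseteq\Mon_{j+1}(S)_{<_\lex x_px_qx_n^{j-1}}$, enumerate that set by $\max(u)$ with the same two-case split (either $\min(u)>p$, or $x_p$ divides $u$ with multiplicity one and all other variables have index $>q$), and settle the equality case via \Cref{prop:Gotzmann} together with $I^\lex=\Lex(b)+\mathfrak{m}^{j+1}$. One small slip in the equality discussion --- $(I^\lex)_{j+1}$ is of course all of $S_{j+1}$ because $\mathfrak{m}^{j+1}\subseteq I^\lex$, not a proper lex segment --- does not affect anything, since what is really used is $G(I^\lex)_{j+1}=\Mon_{j+1}(S)\setminus\Lex(b)_{j+1}$, which your ``converse to \Cref{subset}'' argument correctly identifies with $\Mon_{j+1}(S)_{<_\lex x_px_qx_n^{j-1}}$.
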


\begin{proof}
We fix $j\geq 2$ and we set $w:=x_px_qx_n^{j-1}$. 
By \Cref{lemma:Bigatti} we can use the graded Betti numbers of the lex ideal  $I^\lex\subseteq S$ of $I$ to bound the ones of $I$. Using \Cref{lemma:eliker} we infer 
	\begin{align*}
\beta_{i,i+j}(S/I)\leq \beta_{i,i+j}(S/I^{\lex})&\overset{\hspace{48pt}}{=}\displaystyle\sum_{u\in G(I^{\lex})_{j+1}}\binom{\max(u)-1}{i-1}\hspace{10pt                                                                       }\\
&	\overset{(\text{\Cref{subset}})}{\leq} \displaystyle\sum_{u\in \Mon_{j+1}(S)_{<w}}\binom{\max(u)-1}{i-1} \\
&	\overset{\hspace{48pt}}{=}\displaystyle\sum_{\substack{u\in \Mon_{j+1}(S)_{<w}\\
				x_p|u}}\binom{\max(u)-1}{i-1}+\displaystyle\sum_{\substack{u\in \Mon_{j+1}(S)_{<w}\\
				x_p\nmid u}}\binom{\max(u)-1}{i-1}.
	\end{align*}
Let $u$ be a monomial of degree $j+1$, such that $u<_\lex w$. If $x_p|u$, then $\max(u)\geq q+1$ and $u$ is of the form $x_px_{\max(u)}\cdot v$, where $v$ is a monomial in  $\FF[x_{q+1},\ldots,x_{\max(u)}]$ of degree $j-1$. In particular, there are $\binom{(\ell-q)+(j-1)-1}{j-1}$ many such monomials with $\max(u)=\ell$.  Similarly, if $u$ is not divisible by $x_p$, then $\max(u)\geq p+1$ and $u$ is of the form $x_{\max(u)}\cdot v$, where $v$ is a monomial of degree $j$ in $\FF[x_{p+1},\ldots,x_{\max(u)}]$. There are $\binom{(\ell-p)+j-1}{j}$ many such monomials with $\max(u)=\ell$. The desired inequality follows.

For the equality case first note that if $I=J+\mathfrak{m}^{j+1}$, where $J$ is a Gotzmann ideal generated in degree $d$, then it follows from \Cref{prop:Gotzmann} that $\beta_{i,i+j}(S/I)=\beta_{i,i+j}(S/I^\lex)$ for all $i$. Moreover, as $I^\lex=\Lex(b)+\mathfrak{m}^{j+1}$, where $\Lex(b)$ denotes the lex ideal generated by the $b$ lexicographically largest monomials of degree $2$, the lex ideal $I^\lex$ attains equality in \Cref{eq:BettiBoundCM}.


\end{proof}	

\begin{remark}
It is worth remarking that if an ideal $I$  attains equality in \eqref{eq:BettiBoundCM} for a fixed $j$, then the ideal $J$  (where $I=J+\mathfrak{m}^{j+1}$ as above) is not necessarily a monomial ideal.
E.g., for $n=2$ and $b=2$ the ideals
$$ 
(x_1^2,x_1x_2)+(x_1,x_2)^3 \quad \mbox{and}\quad (x_1^2+x_1x_2,x_2^2+x_1x_2)+(x_1,x_2)^3
$$
both maximize $\beta_{i,i+2}$ for any $i$. The maximal Betti numbers in this case are $\beta_{1,3}=\beta_{2,4}=1$.
\end{remark}

\subsection{Application: Balanced Cohen-Macaulay complexes}
The aim of this section is to use the results from the previous section in order to derive upper bounds for the graded Betti numbers of balanced Cohen-Macaulay complexes. 

In the following, let $\Delta$ be a balanced Cohen-Macaulay simplicial complex and let $\Theta\subseteq\FF[\Delta]$ be a linear system of parameters for $\FF[\Delta]$. In order to apply \Cref{thm:BettiCM} we need to bound the Hilbert function of the Artinian reduction $\FF[\Delta]/\Theta \FF[\Delta]$ in degree $2$ from above. As $\Delta$ is Cohen-Macaulay, it follows from \Cref{hiCM} that
$$
\dim_\FF\left(\FF[\Delta]/\Theta \FF[\Delta]\right)_2=h_2(\Delta),
$$
which implies that we need to find an upper bound  for $h_2(\Delta)$ or, equivalently, for the number of edges $f_1(\Delta)$.
  
\begin{lemma}\label{boundh2}
	Let $\Delta$ be a $(d-1)$-dimensional balanced simplicial complex with vertex partition $V(\Delta)=\displaystyle\bigcupdot_{i=1}^d V_i$. Let $n:=\left| V\right|$ and $n_i:=\left| V_i\right|$. Then
	\begin{equation}\label{eq:h2}
	h_2(\Delta)\leq \binom{n-d+1}{2}-\displaystyle\sum_{i=1}^{d}\binom{n_i}{2}.
	\end{equation}
\end{lemma}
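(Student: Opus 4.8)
The plan is to bound $h_2(\Delta)$ by bounding $f_1(\Delta)$, the number of edges of $\Delta$, and then use the Cohen--Macaulay relation between $h_2$ and the $f$-vector. Recall that for a $(d-1)$-dimensional simplicial complex one has $h_2(\Delta) = f_1(\Delta) - (d-1)f_0(\Delta) + \binom{d}{2}$, which follows directly from the definition of the $h$-vector (and in fact the inequality we want only needs that $h_2$ is a fixed increasing affine function of $f_1$ once $f_0 = n$ and $d$ are fixed). So it suffices to prove
$$
f_1(\Delta)\leq \binom{n}{2}-\sum_{i=1}^d\binom{n_i}{2},
$$
after which substituting into the formula for $h_2$ and simplifying $\binom{n}{2} - (d-1)n + \binom{d}{2} = \binom{n-d+1}{2}$ yields exactly \eqref{eq:h2}.

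The bound on $f_1(\Delta)$ is the combinatorial heart of the argument, and it is essentially immediate from balancedness: since $\Delta$ is balanced with color classes $V_1,\dots,V_d$, every edge of $\Delta$ joins vertices in two \emph{distinct} color classes (a monochromatic edge $\{u,v\}\subseteq V_i$ would violate $|F\cap V_i|\leq 1$). Hence the $1$-skeleton of $\Delta$, viewed as a graph on $n$ vertices, contains no edge inside any $V_i$; the maximum number of edges available is therefore that of the complete $d$-partite graph $K_{n_1,\dots,n_d}$, which is $\binom{n}{2} - \sum_{i=1}^d\binom{n_i}{2}$ (all $\binom{n}{2}$ pairs minus the forbidden monochromatic ones). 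This gives the displayed inequality for $f_1$.

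I do not expect a genuine obstacle here; the only care needed is bookkeeping. One point to double-check is that the displayed formula for $h_2$ in terms of $f_0, f_1$ requires no Cohen--Macaulay or purity hypothesis — it is purely the linear change of coordinates defining the $h$-vector — so the lemma as stated (for an arbitrary balanced complex, not necessarily Cohen--Macaulay) is fine; Cohen--Macaulayness only enters later, in the application, via \Cref{hiCM} to identify $h_2(\Delta)$ with $\dim_\FF(\FF[\Delta]/\Theta\FF[\Delta])_2$. The other routine check is the algebraic simplification $\binom{n}{2} - (d-1)n + \binom{d}{2} = \binom{n-d+1}{2}$, which one verifies by expanding both sides. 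Assembling these pieces gives \eqref{eq:h2}.
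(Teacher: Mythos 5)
Your proof is correct and follows essentially the same route as the paper: bound $f_1(\Delta)$ by the number of edges of the complete $d$-partite graph $K_{n_1,\dots,n_d}$, then substitute into the identity $h_2(\Delta) = \binom{d}{2} - (d-1)f_0(\Delta) + f_1(\Delta)$. Your remark that this identity needs no Cohen--Macaulay or purity hypothesis is also accurate and explains why the lemma is stated for arbitrary balanced complexes.
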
	 
\begin{proof}
	As $\Delta$ is balanced, it does not have monochromatic edges, i.e., we have  $\left\lbrace v,w\right\rbrace\notin\Delta$, if $v$ and $w$ belong to the same color class $V_i$ ($1\leq i\leq d$). As there are $\binom{n_i}{2}$ monochromatic non-edges of color $i$,  this gives the following upper bound for $f_1(\Delta)$:
	
	$$f_1(\Delta)\leq\binom{n}{2}-\displaystyle\sum_{i=1}^{d}\binom{n_i}{2}.$$
	  The claim now directly follows from the relation
	\begin{equation*}
	h_2(\Delta)=\binom{d}{2}-(d-1)f_0(\Delta)+f_1(\Delta).
	\end{equation*}  
\end{proof}	

A direct application of \Cref{thm:BettiCM} combined with \Cref{boundh2} finally yields:

\begin{theorem}\label{bound_precise}
	Let $\Delta$ be a $(d-1)$-dimensional balanced Cohen-Macaulay complex with vertex partition $V=\displaystyle\bigcupdot_{i=1}^d V_i$. Let $n:=|V|$, $n_i:=\left| V_i\right|$ and $b:=\displaystyle\sum_{i=1}^d\binom{n_i}{2}$. Let $x_px_q$ be the $b$\textsuperscript{th} largest degree $2$ monomial of $\FF[x_1,\ldots,x_{n-d}]$ in lexicographic order with $p\leq q$. Then
	$$\beta_{i,i+j}(\mathbb{F}\left[ \Delta\right] )\leq \displaystyle\sum_{\ell=p+1}^{n-d}\binom{\ell-p+j-1}{j}\binom{\ell-1}{i-1}+\displaystyle\sum_{\ell=q+1}^{n-d}\binom{\ell-q+j-2}{j-1}\binom{\ell-1}{i-1},$$
	for any $i\geq 0$ and $2\leq j\leq d$.
\end{theorem}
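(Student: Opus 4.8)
The plan is to reduce the statement about the Stanley--Reisner ring $\FF[\Delta]$ of a balanced Cohen--Macaulay complex to the Artinian setting of \Cref{thm:BettiCM}. Since $\Delta$ is Cohen--Macaulay of dimension $d-1$, the ring $\FF[\Delta]$ has Krull dimension $d$, and (after passing to an infinite field, which does not change graded Betti numbers) we may choose a linear system of parameters $\Theta=(\theta_1,\dots,\theta_d)\subseteq \FF[\Delta]_1$. By definition of Cohen--Macaulayness, $\Theta$ is a regular sequence, so by \Cref{technicalbetti}(ii) we have
\begin{equation}\label{eq:reduction}
\beta_{i,i+j}(\FF[\Delta])=\beta_{i,i+j}^{\overline{S}}(\FF[\Delta]/\Theta\FF[\Delta])
\end{equation}
for all $i,j\geq 0$, where $\overline{S}$ is a polynomial ring in $n-d$ variables. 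Write $A:=\FF[\Delta]/\Theta\FF[\Delta]=\overline{S}/I$ for the Artinian reduction; here $I\subseteq\overline{S}$ is a homogeneous ideal. Since $\Delta$ has no edges inside any color class, $\FF[\Delta]$ has $b=\sum_{i=1}^d\binom{n_i}{2}$ quadratic generators coming from monochromatic non-edges, and these survive in the Artinian reduction.

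The next step is to bound $\dim_\FF A_2$. By \Cref{hiCM} we have $\dim_\FF A_2=h_2(\Delta)$, and \Cref{boundh2} gives
$$
h_2(\Delta)\leq \binom{n-d+1}{2}-\sum_{i=1}^d\binom{n_i}{2}=\binom{(n-d)+1}{2}-b.
$$
Thus $I\subseteq \overline{S}=\FF[x_1,\dots,x_{n-d}]$ is a homogeneous ideal with $\dim_\FF(\overline{S}/I)_2\leq\binom{(n-d)+1}{2}-b$, and $I$ contains no linear forms (as $\Theta$ was chosen generic, or equivalently since $I_\Delta$ is generated in degree $\geq 2$ and modding out a regular sequence of linear forms does not create linear generators of the quotient ideal — one should record this small point). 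Now \Cref{thm:BettiCM}, applied over the polynomial ring $\overline{S}$ in $n-d$ variables with this value of $b$, yields exactly
$$
\beta_{i,i+j}^{\overline{S}}(\overline{S}/I)\leq \sum_{\ell=p+1}^{n-d}\binom{\ell-p+j-1}{j}\binom{\ell-1}{i-1}+\sum_{\ell=q+1}^{n-d}\binom{\ell-q+j-2}{j-1}\binom{\ell-1}{i-1}
$$
for all $i\geq 0$ and $j\geq 2$, where $x_px_q$ is the $b$-th largest degree $2$ monomial of $\FF[x_1,\dots,x_{n-d}]$. Combining with \eqref{eq:reduction} gives the claimed bound. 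The restriction $j\leq d$ is automatic: since $\FF[\Delta]$ has projective dimension $n-\depth=n-d$ as an $S$-module and $\reg$ and depth considerations bound the nonzero strands, the Betti numbers $\beta_{i,i+j}(\FF[\Delta])$ with $j\geq d+1$ require inspecting the top strand separately — but actually one only needs $j\le d$ to make the statement meaningful and, in fact, for a $(d-1)$-dimensional complex any induced subcomplex has dimension $\le d-1$, so $\widetilde H_{j-1}=0$ for $j\ge d+1$ is the wrong claim; rather the point is just that the theorem is stated in the range $2\le j\le d$ and no more is asserted.

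The one genuinely delicate point is the justification that the Artinian reduction $\overline{S}/I$ really is a quotient of an honest polynomial ring in $n-d$ variables by an ideal with \emph{no} linear forms, so that \Cref{thm:BettiCM} applies verbatim. This is standard: choosing $\Theta$ generic, the images of $x_1,\dots,x_{n-d}$ (after a suitable change of coordinates) are a $\FF$-basis of $\FF[\Delta]_1/\Theta$, and $I$ is the kernel of $\overline{S}\twoheadrightarrow \FF[\Delta]/\Theta\FF[\Delta]$, which in degree $1$ is an isomorphism, hence $I\cap\overline{S}_1=0$. Everything else is a direct substitution: plug the bound on $h_2(\Delta)$ from \Cref{boundh2} into \Cref{thm:BettiCM} with ambient dimension $n-d$. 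I expect no real obstacle beyond bookkeeping the number of variables consistently (it is $n-d$, not $n$, throughout, which is why the summation ranges in the conclusion end at $n-d$).
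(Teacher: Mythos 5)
Your proposal is correct and follows essentially the same route as the paper: pass to the Artinian reduction $\FF[\Delta]/\Theta\FF[\Delta]\cong R/J$ with $R$ a polynomial ring in $n-d$ variables (using that the l.s.o.p.\ is a regular sequence, so graded Betti numbers are preserved), then bound $\dim_\FF(R/J)_2=h_2(\Delta)$ via \Cref{boundh2}, check $J$ has no linear forms because $\dim_\FF(R/J)_1=h_1(\Delta)=n-d=\dim_\FF R_1$, and apply \Cref{thm:BettiCM} over $R$. The brief aside about the $b$ monochromatic quadratic generators ``surviving'' is unnecessary for the argument (the Hilbert-function bound is what is actually used), and the digression on $j\leq d$ is a bit muddled, but neither affects the correctness of the proof.
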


The above statement is trivially true also for $j>d$. However, as the Castelnuovo-Mumford regularity of $\FF[\Delta]$ is at most $d$, we know that $\beta_{i,i+j}(\FF[\Delta])=0$ for any $i\geq 0$ and $j>d$. 

\begin{proof}
Let $S=\FF[x_1,\ldots,x_n]$. 
Let $\Theta$ be an l.s.o.p. for $\FF[\Delta]$. It follows from \Cref{technicalbetti} that 
$$\beta_{i,i+j}^S(\FF[\Delta])=\beta_{i,i+j}^{S/\Theta S}(S/(I_\Delta+(\Theta))).$$
Moreover, $S/ \Theta S\cong \FF[x_1,\ldots,x_{n-d}]:=R$ as rings and there exists a homogeneous ideal $J\subseteq R$ with  $\FF[\Delta]/\Theta\FF[\Delta]\cong R/J$ and $\beta_{i,i+j}^R(R/J)=\beta_{i,i+j}^{S/\Theta S}(S/(I_\Delta+(\Theta)))$. In particular, as $\Delta$ is Cohen-Macaulay, $\dim_\FF (R/J)_2=h_2(\Delta)$ satisfies the bound from \Cref{boundh2}. As $h_1(\Delta)=\dim_\FF (R/J)_1$, the ideal $J$ does not contain any linear form and the result now follows from \Cref{thm:BettiCM}.
\end{proof} 

\begin{remark}
Whereas we have seen that the bounds in \Cref{thm:BettiCM} are tight, the ones in \Cref{bound_precise} are not. For example, consider  the case that $n_1=n_2=2$ and $d=2$. In this situation, we have $b:=\sum_{i=1}^d \binom{n_i}{2}=2$ and $x_1x_2$ is the second largest degree $2$ monomial in the lexicographic order. \Cref{bound_precise} gives $\beta_{1,3}\leq 1$. However, by Hochster's formula, if $\Delta$ is a $1$-dimensional simplicial complex with $\beta_{1,3}(\FF[\Delta])=1$, then $\Delta$ must contain an induced $3$-cycle. But this means that $\Delta$ cannot be balanced. 
\end{remark}

\begin{example}\label{example:bound}
Let $\Delta$ be a $3$-dimensional balanced Cohen-Macaulay complex with $3$ vertices in each color class, i.e., $n_i=3$ for $1\leq i\leq 4$. We have $b:=\sum_{i=1}^4\binom{3}{2}=12$ and $x_2x_5$ is the $12$\textsuperscript{th} largest monomial of degree $2$ in variables $x_1,\ldots,x_8$. The bounds from \Cref{bound_precise} are recorded in the following table:
\begin{table}[H]
	\[\begin{array}{r|l|l|l|l|l|l|l|l|c}
	j\setminus i & 0 &1 & 2 & 3 & 4 & 5 & 6 & 7 & 8  \\ \hline
	2&0&62&360&915&1317&1156&617&185&24\\
	3&0&136&821&2155&3184&2855&1551&472&62\\\
	4&0&267&1653&4432&6665&6065&3336&1026&136\\
	\end{array}
	\]
		\label{tab:table1}
\end{table}
We set $S=\FF[x_1,\ldots,x_8]$ and we let $I\subseteq S$ be the lex ideal generated by the $12$ largest monomials of degree $2$ in variables $x_1,\ldots,x_8$. It follows from \Cref{thm:BettiCM} that $\beta_{i,i+j}(S/(I+\mathfrak{m}^{j+1}))$ equals the entry of the above table in the row, labeled $i$ and the column, labeled $j$. Moreover, it is shown in the proof of \Cref{thm:BettiCM} that $\beta_{i,i+\ell}(S/(I+\mathfrak{m}^{j+1}))=0$ if $\ell\notin \{1,j\}$. 
One can easily compute that for any $j$ the first row of the Betti table of $S/(I+\mathfrak{m}^{j+1})$ is given by
\begin{table}[H]
	\[\begin{array}{r|l|l|l|l|l|l|l|l|c}
	j\setminus i & 0 &1 & 2 & 3 & 4 & 5 & 6 & 7 & 8  \\ \hline
	1&0&12&38&66&75&57&28&8&1\\
	\end{array}
	\]
\end{table}
Finally, we compare the bounds from the upper table with the numbers $\beta_{i,i+j}(S/\mathfrak{m}^j)$, for general $3$-dimensional Cohen-Macaulay complexes on $12$ vertices. Those are displayed in the next table:
\begin{table}[H]
	\[\begin{array}{r|l|l|l|l|l|l|l|l|c}
	j\setminus i & 0 &1 & 2 & 3 & 4 & 5 & 6 & 7 & 8  \\ \hline
	2&0&120&630&1512&2100&1800&945&280&36\\
	3&0&330&1848&4620&6600&5775&3080&924&120\\\
	4&0&792&4620&11880&17325&15400&8316&2520&330\\
	\end{array}
	\]
	\end{table}
\end{example}

We point out that while \Cref{bound_precise} provides bounds for $\beta_{i,i+j}(\FF[\Delta])$ for all $i$ and all $j\geq 2$, it does not give bounds for the graded Betti numbers of the linear strand (i.e., for $j=1$). This seems a natural drawback of our approach, since our key ingredient is the concentration of monomials of degree 2 in the lex ideal of $I_\Delta+(\Theta)$ (cf., \Cref{eq:h2}). However, it follows from the next lemma, that there is no better bound in terms of the total number of vertices $n$ and the dimension $d-1$ than in the standard (non-balanced) Cohen-Macaulay case. More precisely, for any $n$ and any $d$ we construct a balanced Cohen-Macaulay complex whose graded Betti numbers equal $\beta_{i,i+j}(S/\mathfrak{m}^j)$ for $j=1$ and for every $i>0$, where $S=\FF[x_1,\dots,x_{n-d}]$.

\begin{lemma}\label{bound_attained}
Let $n$ and $d$ be positive integers. Let $\Gamma_{n-d+1}$ denote the simplicial complex consisting of the isolated vertices $1,2,\ldots,n-d+1$ and let $\Delta_{d-2}$ be the $(d-2)$-simplex with vertices $\{n-d+2,\ldots,n\}$. Then $\Delta_{d-2}\ast\Gamma_{n-d+1}$ is a balanced $(d-1)$-dimensional Cohen-Macaulay complex. Moreover
	$$\beta_{i,i+1}(\mathbb{F}[\Delta_{d-2}\ast\Gamma_{n-d+1}])= i\binom{n-d+1}{i+1} \quad \mbox{for all } i.$$
\end{lemma}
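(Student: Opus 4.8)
The plan is to verify the three assertions in \Cref{bound_attained} in turn: that $\Delta_{d-2}\ast\Gamma_{n-d+1}$ is balanced, that it is Cohen-Macaulay, and that its linear Betti numbers are as claimed. For balancedness, I would take the vertex partition $\{n-d+2\},\{n-d+3\},\ldots,\{n-1\},\{n\}\cup\{1,\ldots,n-d+1\}$, i.e., each of the $d-1$ vertices of the simplex $\Delta_{d-2}$ gets its own color class, and the last color class consists of one vertex of the simplex together with all $n-d+1$ isolated vertices. Since the isolated vertices form an independent set, no face contains two of them, and no face contains two vertices of $\Delta_{d-2}$ in the same class because they are all in different classes; hence every face meets every class in at most one vertex, so the complex is $(d-1)$-dimensional and balanced. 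For Cohen-Macaulayness, I would invoke the fact (used already in \Cref{GammaCM}) that a join of Cohen-Macaulay complexes is Cohen-Macaulay: a simplex is Cohen-Macaulay, and a $0$-dimensional complex (the isolated vertices $\Gamma_{n-d+1}$) is Cohen-Macaulay, so the join is Cohen-Macaulay of dimension $(d-2)+0+1-1=d-1$.

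For the Betti number computation, the cleanest route is to work on the Stanley-Reisner ideal directly. The non-faces of $\Delta_{d-2}\ast\Gamma_{n-d+1}$ are exactly the pairs $\{k,\ell\}$ with $k,\ell\in\{1,\ldots,n-d+1\}$, $k\neq\ell$, since every vertex of $\Delta_{d-2}$ is connected to everything and the only missing edges are among the isolated vertices. Thus $I_{\Delta_{d-2}\ast\Gamma_{n-d+1}}=(x_kx_\ell : 1\le k<\ell\le n-d+1)$, the edge ideal of the complete graph $K_{n-d+1}$ on the variables $x_1,\ldots,x_{n-d+1}$ (extended to the polynomial ring in all $n$ variables, but the extra variables $x_{n-d+2},\ldots,x_n$ appear in no generator and so act as a regular sequence on $S/I_{\Delta_{d-2}\ast\Gamma_{n-d+1}}$, leaving all graded Betti numbers unchanged by \Cref{technicalbetti}(ii)). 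The ideal $(x_kx_\ell: 1\le k<\ell\le m)$ for $m=n-d+1$ is itself a lex ideal generated by all degree-$2$ monomials except $x_1^2,\ldots,x_m^2$; it has a $2$-linear resolution. Its linear Betti numbers $\beta_{i,i+1}$ are classical (it is the Stanley-Reisner ideal of $m$ isolated points, whose only nonvanishing reduced homology is $\widetilde H_0$ of induced subcomplexes), and Hochster's formula gives $\beta_{i,i+1}=\sum_{|W|=i+1}\dim_\FF\widetilde H_0((\Gamma_m)_W)=\sum_{|W|=i+1}(|W|-1)=i\binom{m}{i+1}=i\binom{n-d+1}{i+1}$, as claimed. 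Alternatively one can read this off \Cref{lemma:eliker} applied to this lex ideal, or recognize it as the $1$-skeleton statement implicit in \Cref{roksvold_verdure}.

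I expect the only point requiring any care — and it is minor — is the bookkeeping that adjoining the simplex $\Delta_{d-2}$ to $\Gamma_{n-d+1}$ does not change the graded Betti numbers of the Stanley-Reisner ring: the cone/join with a simplex on disjoint vertices corresponds precisely to adjoining new variables that form a regular sequence, so \Cref{technicalbetti}(ii) applies and the Betti table is preserved. Everything else is a direct application of Hochster's formula (\Cref{Hochster}) to the $0$-dimensional complex $\Gamma_{n-d+1}$, whose induced subcomplexes on an $(i+1)$-set $W$ again consist of $i+1$ isolated points with $\dim_\FF\widetilde H_0=i$, summed over the $\binom{n-d+1}{i+1}$ choices of $W$.
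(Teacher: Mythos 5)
Your Cohen--Macaulayness argument (join of Cohen--Macaulay complexes) and your Betti number computation via Hochster's formula are both correct and match the paper, with your reduction through the regular sequence $x_{n-d+2},\ldots,x_n$ being an equivalent reformulation of the paper's observation that only subsets of $[n-d+1]$ contribute to $\beta_{i,i+1}$. However, there are two errors, one of which is substantive.

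The balancedness argument is wrong. You propose the partition $\{n-d+2\},\{n-d+3\},\ldots,\{n-1\},\{n\}\cup\{1,\ldots,n-d+1\}$, placing the simplex vertex $n$ in the same color class as all the isolated vertices. But every facet of $\Delta_{d-2}\ast\Gamma_{n-d+1}$ has the form $\{n-d+2,\ldots,n\}\cup\{v\}$ for some $v\in\{1,\ldots,n-d+1\}$; such a facet contains both $n$ and $v$, which lie in the same class, so the partition is not a proper coloring. (Moreover your partition has only $d-1$ classes, which is one short for a $(d-1)$-dimensional complex whose facets have $d$ vertices.) The correct partition, as in the paper, uses $d$ classes: each of the $d-1$ simplex vertices $n-d+2,\ldots,n$ is its own singleton class, and the remaining class is $\{1,\ldots,n-d+1\}$, consisting of all the isolated vertices. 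Your statement ``no face contains two vertices of $\Delta_{d-2}$ in the same class because they are all in different classes'' contradicts your own partition, which groups $n$ together with the isolated vertices, and also overlooks the mixed case of a simplex vertex and an isolated vertex in the same class.

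A second, minor slip: the ideal $(x_kx_\ell:1\le k<\ell\le m)$ is \emph{not} a lex ideal -- for instance $x_1^2>_{\lex} x_1x_2$ but $x_1^2$ is not in the ideal (nor is it stable, so the Eliahou--Kervaire formula does not apply directly). Fortunately this does not affect your main derivation, which correctly goes through Hochster's formula applied to the $0$-dimensional complex of $n-d+1$ isolated points. Just drop the ``alternatively one can read this off \Cref{lemma:eliker} applied to this lex ideal'' remark.
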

\begin{proof}
We set $\Delta=\Delta_{d-2}\ast\Gamma_{n-d+1}$. 
As $\Delta$ is the join of a $(d-2)$-dimensional and a $0$-dimensional Cohen-Macaulay complex, it is Cohen-Macaulay of dimension $d-1$. Moreover, coloring the vertices of $\Delta_{d-2}$ with the colors $1,\ldots, d-1$ and assigning color $d$ to all vertices of $\Gamma_{n-d+1}$ gives a proper $d$-coloring of $\Delta$, i.e., $\Delta$ is balanced.

	By Hochster's formula (\Cref{Hochster}), the graded Betti numbers $\beta_{i,i+1}(\mathbb{F}[\Delta])$ are given by
	\begin{equation}
	\label{eq:linearStrand}
	\beta_{i,i+1}(\mathbb{F}[\Delta])=\displaystyle\sum_{W\subseteq [n]:\left| W\right| =i+1} \dim_{\mathbb{F}}\widetilde{H}_{0}(\Delta_W;\mathbb{F}).
	\end{equation}
	As $\Delta_W=(\Delta_{d-2})_W\ast (\Gamma_{n-d+1})_W$, the induced complex $\Delta_W$ is connected whenever $W\cap\{n-d+2,\ldots,n\}\neq\emptyset$. Hence the only non-trivial contributions to \eqref{eq:linearStrand} come from $(i+1)$-element subsets of $[n-d+1]$. For such a subset $W$, the complex $\Delta_W$ consists of $i$ connected components and since there are $\binom{n-d+1}{i+1}$ many such sets, the claim follows.
\end{proof}	

Though we have just seen that Betti numbers (in the linear strand) of balanced Cohen-Macaulay complexes can be as big as the ones for general Cohen-Macaulay complexes, it should also be noted that the simplicial complex $\Delta_{d-2}\ast\Gamma_{n-d+1}$ is special, in the sense that all but one ``big'' color classes are singletons. It is therefore natural to ask, if there are better bounds than those for the general Cohen-Macaulay situation, that take into account the size of the color classes.

\section{A second bound in the Cohen-Macaulay case via lex-plus-squares ideals}\label{sect:SecondBound}

The aim of this section is to provide further upper bounds for the graded Betti numbers of balanced Cohen-Macaulay complexes. On the one hand, those bounds will be a further improvement of the ones from \Cref{bound_precise}. On the other hand, however, they are slightly more complicated to state.  
Our approach is similar to the one used in \Cref{bound_precise} with \emph{lex-plus-squares} ideals as an additional ingredient.  More precisely, we will prove upper bounds for the graded Betti numbers of Artinian quotients $S/I$, where $I\subseteq S$ is a homogeneous ideal having \emph{many} generators in degree $2$, including the squares of the variables $x_1^2,\ldots,x_n^2$. The desired bound for balanced Cohen-Macaulay complexes is then merely an easy application of those more general results.

\subsection{Ideals containing the squares $x_1^2,\ldots,x_n^2$ with many degree $2$ generators}

We recall some necessary definitions and results. As in the previous sections, we let $S=\FF[x_1,\ldots,x_n]$. We further let $P:=(x_1^2,\ldots,x_n^2)\subseteq S$. A monomial ideal $L\subseteq S$ is called \emph{squarefree lex ideal} if for every squarefree monomial $u\in L$ and every monomial $v\in S$ with $\deg(u)=\deg(v)$ and $v>_{\lex} u$ it follows that $v\in L$. For homogeneous ideals containing the squares of the variables the following analog of \Cref{lemma:Bigatti} was shown by Mermin, Peeva and Stillman \cite{MerPeeSti} in characteristic $0$ and by Mermin and Murai \cite{MerminMurai} in arbitrary characteristic:

	\begin{theorem}\label{lppBound}
		Let $I\subseteq S=\FF[x_1,\dots,x_n]$ be a homogeneous ideal containing $P$. Let $I^{\sqlex}\subseteq S$ be the squarefree lex ideal such that $I$ and $I^{\sqlex}+P$ have the same Hilbert function. Then 
		\begin{equation}\label{eq:BettiSquareFreeLex}
		\beta_{i,i+j}^S(S/I)\leq\beta_{i,i+j}^{S}(S/(I^{\sqlex}+P)),
		\end{equation}
		 for all $i,j\geq0$.
	\end{theorem}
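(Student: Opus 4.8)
The statement is a lex-plus-powers theorem in the special case where every imposed power equals $2$, and the plan is to transplant Pardue's proof of \Cref{lemma:Bigatti} into the quotient ring $A:=S/P$. One works throughout with the inclusion-preserving bijection between homogeneous ideals of $S$ containing $P$ and homogeneous ideals of $A$, sending $I\supseteq P$ to $\overline I:=I/P$; note $S/I=A/\overline I$. A monomial ideal $I\supseteq P$ has minimal generating set $G(I)=\{x_1^2,\dots,x_n^2\}\cup U$ with $U$ consisting of squarefree monomials of degree at least $2$, since any monomial divisible by some $x_k^2$ is already a multiple of $x_k^2\in I$. The key structural input is that $A$ is a \emph{Macaulay--Lex ring}: by the Clements--Lindstr\"om theorem, for every homogeneous ideal $\overline I\subseteq A$ there is a unique squarefree lex ideal $L\subseteq S$ with $\dim_\FF(S/(L+P))_k=\dim_\FF(A/\overline I)_k$ for all $k$, and $L+P$ is, in the appropriate sense, the ``most spread out'' ideal containing $P$ with that Hilbert function. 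Hence $I^{\sqlex}+P$ in \Cref{lppBound} is well defined, and it is the natural candidate to maximize all graded Betti numbers among ideals containing $P$ with the prescribed Hilbert function.

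First I would reduce to monomial ideals: fixing any term order $<$ on $S$, one has $\mathrm{in}_<(P)=P$ since $P$ is a monomial ideal, so $\mathrm{in}_<(I)\supseteq P$; and $\beta_{i,i+j}^S(S/I)\le\beta_{i,i+j}^S(S/\mathrm{in}_<(I))$ by the usual upper semicontinuity of graded Betti numbers along the Gr\"obner degeneration of $I$ to $\mathrm{in}_<(I)$. (Here one cannot instead pass to a generic initial ideal, since $P$ is not Borel-fixed.) As $I^{\sqlex}$ depends only on the Hilbert function of $S/I$, which is unchanged, it suffices to prove \eqref{eq:BettiSquareFreeLex} for a monomial ideal $J\supseteq P$.

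The main step is then to show that, among monomial ideals $J\supseteq P$ with a fixed Hilbert function, $J^{\sqlex}+P$ has the largest graded Betti numbers. Following Pardue, I would build a chain of monomial ideals $J=J_0,J_1,\dots,J_N=J^{\sqlex}+P$, each containing $P$ and all with the same Hilbert function, in which $J_{m+1}$ is obtained from $J_m$ by one \emph{combinatorial shift} --- a compression of the squarefree part of $J_m$ attached to a pair $i<j$ --- first moving $J$ into squarefree strongly stable position, then into squarefree lex position. For each step one checks two things: (a) the shifted collection of monomials again spans an ideal containing $P$ with the same Hilbert function in every degree, which is precisely where the Clements--Lindstr\"om property of $A$ is used, membership of $P$ being automatic because the shifts only affect the squarefree part; and (b) $\beta_{i,i+j}^S(S/J_m)\le\beta_{i,i+j}^S(S/J_{m+1})$, obtained from a short exact sequence relating $J_m$, $J_{m+1}$ and suitable colon ideals, followed by a mapping-cone estimate, which is the mechanism underlying the squarefree-stable case and, ultimately, \Cref{lemma:Bigatti} itself. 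Concatenating the inequalities along the chain and combining with the monomialization step gives $\beta_{i,i+j}^S(S/I)\le\beta_{i,i+j}^S(S/(I^{\sqlex}+P))$, as desired. For the last leg of the chain one may instead invoke the Eliahou--Kervaire-type resolution of squarefree stable ideals and compare the binomial sums directly, as in \Cref{lemma:eliker}.

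The hard part is item (b): the non-decrease of graded Betti numbers under a single shift must be established for ideals of $S$ containing $P$ --- equivalently, inside $A$ --- rather than in $S$ itself, and one must check that a shift never introduces a spurious minimal generator lying in $P$ that would corrupt the count. A secondary difficulty, and the reason a second reference is required, is that in positive characteristic a Gr\"obner-degenerated or Borel-type monomial ideal need not be strongly stable, so the combinatorics of the admissible shifts changes; handling this is the content of \cite{MerminMurai}, which replaces the strongly-stable reduction by a characteristic-$p$ compatible deformation while leaving the overall strategy intact, whereas in characteristic $0$ the argument of \cite{MerPeeSti} suffices.
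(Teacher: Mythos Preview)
The paper does not prove \Cref{lppBound}; it is stated there as a known result, attributed to Mermin--Peeva--Stillman \cite{MerPeeSti} in characteristic~$0$ and to Mermin--Murai \cite{MerminMurai} in arbitrary characteristic, and then used as a black box. So there is no ``paper's own proof'' to compare your proposal against.

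That said, your sketch is a reasonable high-level summary of the strategy one finds in those references: reduce to monomial ideals containing $P$ via a Gr\"obner degeneration, then move through a chain of compressions toward the squarefree-lex-plus-squares ideal while controlling Betti numbers at each step. Your observation that one cannot simply take a generic initial ideal (because $P$ is not Borel-fixed) is on point, and you correctly identify step~(b) --- the non-decrease of Betti numbers under a single shift inside $S/P$ --- as the genuine difficulty. However, your proposal remains a sketch at exactly that point: you name the obstacle (spurious generators in $P$, characteristic-$p$ issues with strong stability) but do not supply the argument. The actual proofs in \cite{MerPeeSti} and \cite{MerminMurai} are substantially more delicate than ``Pardue's proof transplanted to $A=S/P$'' --- they develop specific machinery (e.g., coloured squarefree ideals, embeddings, and carefully constructed mapping cones) to make the shift-by-shift comparison go through, and the characteristic-free version in \cite{MerminMurai} is not a minor variation but introduces a different deformation scheme. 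So while your outline points in the right direction, it would not be accepted as a proof without filling in step~(b), which is essentially the entire content of the cited papers.
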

	The existence of a squarefree lex ideal $I^{\sqlex}$ as in the previous theorem is a straight-forward consequence of the Clements-Lindstr\"om Theorem \cite{CL}. Moreover, \Cref{lppBound} provides an instance for which the so-called \emph{lex-plus-powers Conjecture} is known to be true (see \cite{EvansRichert}, \cite{Francisco}, \cite{FranciscoRichert} for more details on this topic). 
	
	An ideal of the form $I^{\sqlex}+P$ is called \emph{lex-plus-squares} ideal. 
	It was shown in \cite[Theorem 2.1 and Lemma 3.1(2)]{MerPeeSti} that the graded Betti numbers of ideals of the form $I+P\subseteq S$, where $I\subseteq S$ is a squarefree monomial ideal can be computed via the Betti numbers of \emph{smaller} squarefree monomial ideals, via iterated mapping cones. In the next result, we use $\binom{[n]}{k}$ to denote the set of $k$-element subsets of $[n]$. 
	\begin{proposition}\label{prop:recursive}
	Let $I\subseteq S$ be a squarefree monomial ideal. Then
	\begin{itemize}
\item[(i)] $$\displaystyle\beta_{i,i+j}^S(S/(I+P))=\sum_{k=0}^{j}\left( \sum_{F\in\binom{[n]}{k}}\beta_{i-k,i+j-2k}^S(S/(I:x_F))\right),$$
				where $x_F=\prod_{f\in F}x_f$. 
\item[(ii)] If $I$ is squarefree lex, then the ideal $(I^{\sqlex}:x_F)$ is a squarefree lex ideal in $S_F=S/(x_f~:~ f\in F)$ for any $F\in\binom{[n]}{k}$ .	
\end{itemize}			
	\end{proposition}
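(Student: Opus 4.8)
The plan is to prove both parts essentially by unwinding the definitions in \cite{MerPeeSti}, but let me indicate the structure. For part (i), I would proceed by induction on $n$, peeling off one variable at a time. The key observation is that for a squarefree monomial ideal $I \subseteq S$, one has a short exact sequence relating $S/(I+P)$ to quotients in fewer variables. Specifically, writing $S' = \FF[x_1,\dots,x_{n-1}]$ and $P' = (x_1^2,\dots,x_{n-1}^2)$, decompose $I + (x_n^2)$ according to whether a generator involves $x_n$: this yields an exact sequence whose outer terms are (a shift of) $S'/((I:x_n) + P')$ and $S'/(I' + P')$ where $I'$ is the part of $I$ not involving $x_n$. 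The mapping cone of the corresponding map of resolutions, built from \cite[Theorem 2.1 and Lemma 3.1(2)]{MerPeeSti}, is minimal because all the maps are minimal (this minimality is precisely what \cite{MerPeeSti} establish for lex-plus-squares and, more generally, squarefree-plus-squares ideals). Tracking degrees through the shift $x_n^2$ (which contributes the $-2k$ in $i+j-2k$ and the $-k$ in $i-k$ after $k$ steps) and summing the binomial contributions over all choices of which $k$ of the $n$ variables get "used up" gives the stated formula. I would set this up carefully and let the induction do the bookkeeping; the base case $n=0$ or $n=1$ is immediate.

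For part (ii), the statement is that colon ideals of a squarefree lex ideal by squarefree monomials remain squarefree lex, now viewed in the smaller polynomial ring $S_F$. I would argue directly: let $L = I^{\sqlex}$ and fix $F \in \binom{[n]}{k}$. A monomial $u \in S_F$ lies in $(L : x_F)$ iff $u \cdot x_F \in L$ (where we regard $u$ as squarefree in the variables $\{x_i : i \notin F\}$ and note $x_F \cdot u$ is squarefree). Now suppose $v \in S_F$ is a squarefree monomial with $\deg v = \deg u$ and $v >_{\lex} u$ in $S_F$. Then $v \cdot x_F >_{\lex} u \cdot x_F$ in $S$: indeed multiplying both sides by the \emph{same} monomial $x_F$ preserves lex order, and one must check that the variables of $x_F$ do not interfere — but since $v, u$ are supported off $F$, the comparison of $v x_F$ and $u x_F$ in the leftmost-differing coordinate is governed by wherever $v$ and $u$ first differ, which is some index outside $F$. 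Hence $v x_F >_{\lex} u x_F \in L$, and since $L$ is squarefree lex and $v x_F$ is squarefree, $v x_F \in L$, so $v \in (L : x_F)$ in $S_F$. This shows $(L : x_F)$ is squarefree lex in $S_F$. One should also note that it suffices to check the lex condition against squarefree $v$ only, which is exactly the definition of squarefree lex ideal — so no separate argument for non-squarefree $v$ is needed.

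The main obstacle I anticipate is the minimality of the iterated mapping cone in part (i): one must ensure that no cancellation occurs when splicing the resolutions together, so that the Betti numbers genuinely add. This is the content of the Mermin–Peeva–Stillman construction for squarefree-plus-squares ideals, and the cleanest route is to invoke \cite[Theorem 2.1 and Lemma 3.1(2)]{MerPeeSti} directly rather than reprove minimality — the theorem there gives the resolution of $S/(I+P)$ explicitly as an iterated mapping cone of resolutions of $S/(I : x_F)$, and the degree shifts are built into their construction. So in the write-up I would state the recursion as a formal consequence of their explicit resolution, quote the relevant formula, and then simply collect terms by the size $k = |F|$ of the support of the "extra" factor, noting that a monomial generator of $I+P$ coming from $x_F^2 \cdot (\text{generator of } I:x_F)$ sits in homological degree shifted by $k$ and internal degree shifted by $2k$. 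Part (ii) is then a short self-contained combinatorial check as above, and is needed to make the recursion of part (i) genuinely useful in the Cohen–Macaulay application, since it guarantees the smaller ideals appearing are again of the tractable (squarefree lex) form.
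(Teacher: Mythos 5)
The paper gives no proof of this proposition: it is stated as an immediate consequence of the cited results \cite[Theorem~2.1 and Lemma~3.1(2)]{MerPeeSti}, with no \texttt{proof} environment following the statement. Your proposal also ultimately invokes those same two results of Mermin--Peeva--Stillman as the engine for part~(i), so the approach is essentially the same; your self-contained verification of part~(ii) (that coloning a squarefree lex ideal by $x_F$ stays squarefree lex after restricting to the variables off $F$, because multiplication by $x_F$ preserves the lex comparison when both monomials are supported off $F$) is correct and is a harmless expansion of what the paper merely cites. The only small inaccuracy is your parenthetical remark that ``it suffices to check the lex condition against squarefree $v$ only, which is exactly the definition of squarefree lex ideal'': the paper's stated definition quantifies over \emph{all} monomials $v$ of the same degree, not just squarefree ones, but your actual argument (via $v\,x_F >_{\lex} u\,x_F$ and the squarefree-lex property of $L$) covers arbitrary $v$ anyway, so nothing breaks.
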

	
We have the  following analog of \Cref{p and q} in the squarefree setting.
	
	\begin{lemma}
	\label{lemma:p and q squarefree}
	Let $n\in \NN$ be a positive integer and let $b<\binom{n}{2}$. Let $x_px_q$  be the $b$\textsuperscript{th} largest monomial in the lexicographic order of degree $2$ squarefree monomials in variables $x_1,\ldots,x_n$ and assume $p< q$. Then:
	 $$p=n-1+\left\lfloor\frac{1}{2}-\frac{\sqrt{4n(n-1)-8b+1}}{2}\right\rfloor,$$
		and
		$$q=b+\binom{p+1}{2}-(p-1)n.$$
	\end{lemma}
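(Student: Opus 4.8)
\emph{Proof proposal.} The plan is to deduce the statement from \Cref{p and q} via an order-preserving bijection between squarefree degree-$2$ monomials and arbitrary degree-$2$ monomials in one fewer variable. Concretely, I would send a squarefree monomial $x_ix_j$ with $i<j$ in $S=\FF[x_1,\dots,x_n]$ to the monomial $x_ix_{j-1}$ in $\FF[x_1,\dots,x_{n-1}]$. Since $1\le i\le j-1\le n-1$ and every pair $1\le a\le c\le n-1$ occurs exactly once as $(i,j-1)$, this is a bijection from the squarefree degree-$2$ monomials of $S$ onto \emph{all} degree-$2$ monomials of $\FF[x_1,\dots,x_{n-1}]$, and a comparison of exponent vectors shows $x_ix_j>_\lex x_kx_\ell$ if and only if $x_ix_{j-1}>_\lex x_kx_{\ell-1}$. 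Hence the $b$\textsuperscript{th} largest squarefree degree-$2$ monomial of $S$ is $x_px_q$ (with $p<q$) if and only if $x_px_{q-1}$ is the $b$\textsuperscript{th} largest degree-$2$ monomial of $\FF[x_1,\dots,x_{n-1}]$.

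Then I would invoke \Cref{p and q} with $n$ replaced by $n-1$. Since $4(n-1)\bigl((n-1)+1\bigr)=4n(n-1)$, the quantity under the root there is precisely $-8b+4n(n-1)+1$, and the first index it produces is our $p$; rewriting its nested floor via $-\lfloor y\rfloor=\lceil -y\rceil$ and pulling the integer constant out of the bracket gives (after the rounding bookkeeping discussed below) the displayed closed form for $p$. For $q$, the corresponding part of \Cref{p and q} (with $n\rightsquigarrow n-1$ and its ``$q$'' equal to our $q-1$) yields $q-1=b+\tfrac{(p-1)\bigl(p-2(n-1)\bigr)}{2}$; expanding $\tfrac{(p-1)(p-2n+2)}{2}=\binom{p}{2}-(n-1)(p-1)$ and simplifying $1+\binom{p}{2}-(n-1)(p-1)=\binom{p+1}{2}-(p-1)n$ gives $q=b+\binom{p+1}{2}-(p-1)n$.

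Alternatively, one can argue directly, mirroring the appendix proof of \Cref{p and q}: for a fixed smaller index $m$ there are $n-m$ squarefree degree-$2$ monomials $x_mx_{m+1}>_\lex\cdots>_\lex x_mx_n$, so $g(m):=\sum_{i=1}^m(n-i)=mn-\binom{m+1}{2}$ counts those whose smaller index is $\le m$; thus $x_px_q$ is the $b$\textsuperscript{th} largest exactly when $g(p-1)<b\le g(p)$, so $p$ is the least integer with $g(p)\ge b$, equivalently the least solution of $p^2-(2n-1)p+2b\le 0$, and then $q=p+b-g(p-1)$. The step I expect to require the most care is the floor/ceiling bookkeeping when passing from ``$p$ is the least integer with $g(p)\ge b$'' to the closed form, in particular the boundary case in which $-8b+4n(n-1)+1$ is a perfect square (equivalently $g(p)=b$, i.e.\ $x_px_q=x_px_n$): one must check that the rounding in the displayed formula returns the correct value of $p$ there as well.
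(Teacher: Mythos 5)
Your bijection $x_ix_j\mapsto x_ix_{j-1}$ (for $i<j$) is indeed a strictly order-preserving identification of the squarefree degree-$2$ monomials in $n$ variables with \emph{all} degree-$2$ monomials in $n-1$ variables, and it reduces \Cref{lemma:p and q squarefree} to a direct corollary of \Cref{p and q}. This is a genuinely different and cleaner route than the paper's, which instead mirrors the counting argument from the appendix proof of \Cref{p and q} --- essentially the route you sketch as your ``alternative''. Both correctly locate $p$ and $q$, and your algebraic simplification for $q$ checks out.

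The one real gap is in the rounding step you defer to ``bookkeeping''. Either route yields $p=n-1-\left\lfloor \tfrac{\sqrt{4n(n-1)-8b+1}}{2}-\tfrac{1}{2}\right\rfloor$, and rewriting $-\lfloor y\rfloor=\lceil -y\rceil$ as you propose gives $p=n-1+\left\lceil \tfrac{1}{2}-\tfrac{\sqrt{4n(n-1)-8b+1}}{2}\right\rceil$ --- a \emph{ceiling}, not the floor in the displayed statement. These coincide precisely when $\tfrac{\sqrt{4n(n-1)-8b+1}}{2}-\tfrac{1}{2}$ is an integer, i.e.\ when $4n(n-1)-8b+1$ is a perfect square, which is exactly the ``boundary'' case you single out for scrutiny; in every other case the two expressions differ by $1$. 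So your concern is pointed the wrong way: the displayed floor formula agrees with the count on the boundary and is off by one generically (for instance $n=3$, $b=1$: the count gives $p=1$, while $n-1+\lfloor(1-\sqrt{17})/2\rfloor=0$). You should carry the rounding through explicitly rather than asserting it closes; as it stands, your argument produces a ceiling where the statement has a floor, and no bookkeeping reconciles them. The paper's own proof glosses over this very step (``we infer that \dots''), so your attempt is faithful to it, but the discrepancy is worth surfacing rather than waving at.
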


The proof is deferred to the appendix since it is technical and the precise statement is not needed during the remaining part of this article.

For squarefree lex ideals (or more generally squarefree stable ideals) the following analog of the Eliahou-Kervaire formula \Cref{lemma:eliker} is well-known:

\begin{lemma}\cite[Corollary 7.4.2]{HH}\label{lemma:EK-squarefree}
Let $I\subseteq S$ be a squarefree lex ideal. Then:
 \begin{equation}
 \displaystyle\beta_{i,i+j}^S(S/I)=\sum_{u\in G(I)_{j+1}}\binom{\max(u)-j-1}{i-1},
\end{equation}
			for every $i\geq 1$, $j\geq 0$.
\end{lemma}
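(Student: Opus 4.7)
The plan is to deduce the formula from the existence of an explicit minimal free resolution for squarefree stable ideals, in parallel with the strategy that proves the classical Eliahou--Kervaire formula \Cref{lemma:eliker}. Recall that a squarefree monomial ideal $I$ is called \emph{squarefree stable} if, whenever $u \in I$ is a squarefree monomial and $k < \max(u)$ satisfies $x_k \nmid u$, then $x_k \cdot (u / x_{\max(u)}) \in I$.

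The first step is to observe that every squarefree lex ideal is squarefree stable. Indeed, if $u \in I$ is squarefree and $k < \max(u)$ with $x_k \nmid u$, then $x_k \cdot (u / x_{\max(u)})$ is a squarefree monomial of the same degree as $u$ that is strictly larger than $u$ in the lexicographic order, so it lies in $I$. Thus it suffices to prove the claimed formula in the slightly greater generality of squarefree stable ideals.

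The second and main step is to construct the squarefree analogue of the Eliahou--Kervaire resolution for a squarefree stable ideal $I$. Its free basis in homological degree $i$ is indexed by pairs $(u, T)$, where $u \in G(I)$ is a minimal generator and $T \subseteq \{1, \dots, \max(u) - 1\} \setminus \mathrm{supp}(u)$ has cardinality $i - 1$; here $\mathrm{supp}(u) := \{k : x_k \mid u\}$. Such a pair contributes a generator in internal degree $\deg(u) + |T|$. The differential is the natural squarefree analogue of the Eliahou--Kervaire differential, using the squarefree stable property to rewrite each product $x_k \cdot u$ for $k \in T$ in terms of basis elements of strictly smaller homological degree. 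Exactness and minimality are proved by the standard induction, either by peeling off the lexicographically smallest generator via an iterated mapping cone, or by exhibiting an explicit contracting homotopy.

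Finally, once the resolution is in hand, the Betti numbers can be read off by counting basis elements. For $u \in G(I)_{j+1}$ we have $|\mathrm{supp}(u)| = j+1$, and since $\max(u) \in \mathrm{supp}(u)$ the set $\{1,\dots,\max(u)-1\} \setminus \mathrm{supp}(u)$ has cardinality $\max(u) - j - 1$. The number of admissible index sets $T$ of size $i-1$ is therefore $\binom{\max(u) - j - 1}{i - 1}$, and summing over $u \in G(I)_{j+1}$ yields the stated formula. The main obstacle is of course the construction and verification of the squarefree Eliahou--Kervaire resolution; this is carried out in detail in \cite[\S7.4]{HH} following the blueprint of the original Eliahou--Kervaire argument.
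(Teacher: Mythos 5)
Your proposal is correct and matches the source the paper relies on: the paper itself gives no proof of \Cref{lemma:EK-squarefree} but simply cites \cite[Corollary 7.4.2]{HH}, and your argument—first reducing from squarefree lex to squarefree stable, then constructing the squarefree Eliahou--Kervaire resolution with basis indexed by pairs $(u,T)$ with $T\subseteq\{1,\dots,\max(u)-1\}\setminus\mathrm{supp}(u)$, and finally counting $|T|=i-1$ to get $\binom{\max(u)-j-1}{i-1}$—is precisely the proof in Herzog--Hibi \S 7.4. So this is essentially the same approach as the paper's (cited) proof, with the counting and degree bookkeeping carried out correctly.
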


%
%

We can now formulate the main result of this section:

\begin{theorem}\label{thm:BoundsBettiLexPlusSquares}
Let $I\subseteq S$ be a homogeneous ideal not containing any linear form. Let $\dim_\FF(S/(I+P))_2\leq \binom{n}{2}-b$ for some positive integer $b$. Let $x_px_q$, where $p<q$, be the $b$\textsuperscript{th} largest squarefree monomial in $S$ of degree $2$ in lexicographic order. Then:
\begin{align*}
\label{eq:BettiLexPlusSquares}
\beta_{i,i+j}(S/(I+P))\leq \sum_{k=0}^{j-1}\Bigg[&\binom{n-p}{k}\sum_{\ell=p+j-k+1}^{n-k}\binom{\ell-p-1}{j-k}\binom{\ell-j+k-1}{i-k-1}\\
+&\binom{n-q}{k}\sum_{\ell=q+j-k}^{n-k}\binom{\ell-q-1}{j-k-1}\binom{\ell-j+k-1}{i-k-1}\\
+&\binom{n-q}{k-1}\sum_{\ell=q+j-k}^{n-k}\binom{\ell-q}{j-k}\binom{\ell-j+k-1}{i-k-1}\Bigg]\\
+&\binom{n-j}{i-j}\left(\binom{n-p}{j}+\binom{n-q}{j-1}\right)
\end{align*}
for all $i> 0$, $j\geq 2$.
\end{theorem}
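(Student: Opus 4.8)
The strategy mirrors the proof of \Cref{thm:BettiCM}, but now using the squarefree lex machinery in place of ordinary lex ideals. First I would invoke \Cref{lppBound}: since $I$ contains $P$ and $\dim_\FF(S/(I+P))_2 \leq \binom{n}{2}-b$, the ideal $I^{\sqlex}$ contains at least the $b$ largest squarefree monomials of degree $2$, so $x_px_q$ (the $b$\textsuperscript{th} largest) is the lex-smallest degree-$2$ squarefree generator of some squarefree lex ideal $L$ with $\beta_{i,i+j}(S/(I+P)) \leq \beta_{i,i+j}(S/(L+P))$. Then I would apply \Cref{prop:recursive}(i) to rewrite $\beta_{i,i+j}(S/(L+P))$ as the sum over $k$ and over $F\in\binom{[n]}{k}$ of $\beta_{i-k,i+j-2k}(S/(L:x_F))$. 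By \Cref{prop:recursive}(ii), each colon ideal $(L:x_F)$ is a squarefree lex ideal in the smaller ring $S_F$ on $n-k$ variables, so \Cref{lemma:EK-squarefree} applies to each summand.

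The heart of the argument is the analog of \Cref{subset} in the squarefree setting: I must bound, for each $F$ with $|F|=k$, the set $G(L:x_F)_{m}$ (here $m=i+j-2k - (i-k) +1 = j-k+1$, i.e. generators in the degree $(j-k+1)$ strand of the colon ideal) by an explicit set of monomials below some critical monomial. Write $w = x_px_q x_n^{j-1}$ for the ``boundary'' monomial controlling generators of $L$ itself (as in \Cref{subset}). When one colons by $x_F$, the generators of $(L:x_F)$ of a given degree arise from generators of $L$ (or from monomials $u\in L$ whose colon by $x_F$ strips off variables in $F$); the key combinatorial observation is that these are controlled by how $F$ intersects $\{x_1,\dots,x_p\}$, $\{x_{p+1},\dots,x_q\}$, and $\{x_{q+1},\dots,x_n\}$. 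The three terms inside the bracketed sum over $k$ correspond precisely to the three cases: (a) the colon generator is divisible by the ``image'' of $x_p$ with $x_q$ already consumed; (b) $x_q$ survives in the colon generator but $x_p$ does not; (c) one of $x_p$ or $x_q$ lies in $F$. The final term $\binom{n-j}{i-j}\left(\binom{n-p}{j}+\binom{n-q}{j-1}\right)$ is the $k=j$ contribution, where the colon ideal $(L:x_F)$ is forced to contain a variable (degree-$1$ generator) in $S_F$, and the Eliahou--Kervaire count degenerates to a single binomial $\binom{\ell - j - 1}{i - k - 1}$ summed appropriately; I would handle this boundary case separately and check it matches.

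For each case I would: (i) describe the shape of the relevant colon-ideal generators as $x_{\text{(surviving anchor)}} \cdot v$ with $v$ a monomial in a shifted variable range and of prescribed degree, exactly as in the proof of \Cref{thm:BettiCM}; (ii) count how many such monomials $v$ have a given maximum index $\ell$, giving a binomial of the form $\binom{\ell - (\text{shift}) - 1}{(\text{degree})}$; (iii) multiply by the Eliahou--Kervaire weight $\binom{\max(u) - (j-k+1) - 1}{(i-k)-1} = \binom{\ell - j + k - 1}{i-k-1}$ from \Cref{lemma:EK-squarefree} applied in $S_F$ (note the $n-k$ variables and degree $j-k+1$); and (iv) multiply by the number of choices of $F$ compatible with that case, namely $\binom{n-p}{k}$, $\binom{n-q}{k}$, or $\binom{n-q}{k-1}$ respectively. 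Summing over $\ell$ and $k$ then assembles the claimed bound.

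\textbf{Main obstacle.}
The hard part is the bookkeeping in the squarefree analog of \Cref{subset}: unlike the ordinary-lex case, colons by squarefree monomials both lower the number of variables and can turn higher-degree generators into lower-degree ones, so one must carefully argue that no generator of $(L:x_F)$ in the relevant strand exceeds the critical monomial, and simultaneously verify that the counting of admissible $F$ does not double-count (e.g. when $F$ meets more than one of the three index blocks, or when $p$ or $q$ itself lies in $F$). Getting the shifts in the binomial coefficients exactly right — reconciling the Eliahou--Kervaire index $\max(u)-\deg-1$ in the reduced ring with the original variable labels — is the delicate point, and I would pin it down by first checking the $k=0$ term reduces to \eqref{eq:BettiBoundCM} (with $n$ in place of $n-d$, as squarefree forces the distinction) and the $k=j$ term against a small explicit example such as the one in \Cref{example:bound}.
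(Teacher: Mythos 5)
Your proof plan follows the same strategy as the paper: reduce to a squarefree lex ideal $L$ via \Cref{lppBound}, decompose $\beta_{i,i+j}(S/(L+P))$ via \Cref{prop:recursive}, bound each colon ideal $(L:x_F)$ with the squarefree Eliahou--Kervaire formula, and sum over $F$ classified by how it meets the index blocks. However, two points in your outline need correction before the bookkeeping will close. First, the monomial $w = x_px_qx_n^{j-1}$ you propose as a ``boundary'' is not squarefree, so there is no direct squarefree analog of \Cref{subset} based on it; the paper instead describes $G(L:x_F)_{j-k+1}$ directly as a set of squarefree monomials in a reduced and relabeled polynomial ring, separately for each shape of $F$. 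Second, your account of the three bracketed terms does not match the paper's actual case split. After discarding $F$ that contain two indices $s<t$ with $x_sx_t\geq_{\lex} x_px_q$ (for which $(L:x_F)=S$) and singletons $\{f\}$ with $f<p$ (which affect only $j=1$), the surviving cases are: (i) all indices of $F$ exceed $p$ and at least one lies in $(p,q]$, weighted $\binom{n-p}{k}-\binom{n-q}{k}$; (ii) all indices of $F$ exceed $q$, weighted $\binom{n-q}{k}$; and (iii) $p\in F$ with all other indices exceeding $q$, weighted $\binom{n-q}{k-1}$. In particular $q\in F$ is absorbed into case (i) rather than appearing as a separate case, and the three displayed terms arise from recombining (i) with part of (ii) (the $\binom{n-q}{k}$ coefficients cancel in the first sum), then the remaining part of (ii), then (iii) --- not from a one-to-one match with ``$x_p$ consumed / $x_q$ survives / $p$ or $q$ in $F$.'' Your treatment of the $j=k$ degenerate term, where $(L:x_F)$ acquires linear generators and \Cref{lemma:EK-squarefree} collapses to $\binom{n-j}{i-j}$, is correct, and the rest is a matter of carrying out the above case analysis carefully.
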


\begin{proof}
By \Cref{lppBound} we have $\beta_{i,i+j}(S/(I+P))\leq \beta_{i,i+j}(S/(L+P))$, where $L\subseteq S$ is the squarefree lex ideal such that $L+P$ and $I+P$ have the same Hilbert function. By assumption, $L$ does not contain variables and $\dim_\FF L_2\geq b$. Hence, $L$ contains all squarefree degree $2$ monomials that are lexicographically larger or equal to $x_px_q$. We can further compute $\beta_{i,i+j}(S/(L+P))$ using \Cref{prop:recursive}. For this, we need to analyze the ideals $(L:x_F)$, where $F\in \binom{[n]}{k}$.  
We distinguish four cases (having several subcases):\\

{\sf Case 1:} Assume that $F=\{f\}$ for $1\leq f<p$. In particular, we have $p>1$. Since $L$ is squarefree lex and $x_px_q\in L$, it holds that $x_f x_\ell\in L$ for all $\ell\in [n]\setminus \{f\}$. This implies $(x_i~:~i\in[n]\setminus \{f\})\subseteq (L:x_F)$. As, by \Cref{prop:recursive} (ii) $(L:x_F)$ can be considered as an ideal in $S_F$ and hence no minimal generator is divisible by $x_f$, we infer that $(L:x_F)=(x_i~:~i\in[n]\setminus \{f\})$. As $(L:x_F)$ and $(x_1,\ldots,x_{n-1})$ have the same graded Betti numbers, it follows from \Cref{lemma:EK-squarefree} that $F$ only contributes to $\beta_{i,i+j}(S/(L+P))$ if $j=1$, a case which we do not consider.

{\sf Case 2:} Assume that there exist $1\leq s<t\leq n$ such that $\{s,t\}\subseteq F$ and $x_sx_t\geq_\lex x_px_q$. As $L$ is squarefree lex and $x_px_q\in L$, we infer that $x_sx_t\in L$ and hence $1\in (L:x_F)$, i.e., $(L:x_F)=S$. In particular, such $F$ never contributes to $\beta_{i,i+j}(S/(L+P)) $.\\
 
{\sf Case 3:} Suppose that there do not exist $s,t\in F$ ($s\neq t$) with $x_sx_t\geq_\lex x_px_q$. We then have to consider the following two subcases:
\begin{itemize}
\item[$\empty$] {\sf Case 3.1:} $f> p$ for all $f\in F$.
\item[$\empty$] {\sf Case 3.2:} $p\in F$ and $f>q$ for all $f\in F\setminus \{p\}$. 
\end{itemize}
{\sf Case 3.1 (a):} Assume in addition that there exists $f\in F$ with $p<f\leq q$. As $x_px_q\in L$, $x_\ell x_f\geq_\lex x_px_q$ for $1\leq \ell\leq p$ and since $L$ is squarefree lex, we infer that $(x_1,\ldots,x_p)\subseteq (L:x_F)$. Moreover, by \Cref{prop:recursive} (ii) $(L:x_F)$ is squarefree lex as an ideal in $S_F$. If we reorder (and relabel) the variables $x_1,\ldots,x_n$ by first ordering $\{x_i~:~i\notin F\}$ from largest to smallest by increasing indices and then adding $\{x_f~:~f\in F\}$ in any order, the ideal $(L:x_F)$ will be a squarefree lex ideal in $S$ with respect to this ordering of the variables. If $j\neq k$, then, using \Cref{lemma:EK-squarefree}, we conclude
\begin{align*}
\beta_{i-k,i+j-2k}(S/(L:x_F))=&\sum_{\ell=p+j-k+1}^{n-k}\left(\sum_{u\in G(L:x_F)_{j-k+1}}\binom{\ell-(j-k)-1}{i-k-1}\right)\\
\leq &\sum_{\ell=p+j-k+1}^{n-k} \binom{\ell-p-1}{j-k}\binom{\ell-j+k-1}{i-k-1},
\end{align*}
where the last inequality follows from the fact that $G(L:x_F)_{j-k+1}\subseteq G((x_{p+1},\ldots,x_{n-k})^{j-k+1})$. For $j=k$, we note that (after relabeling) we have $G(L:x_F)_1\subseteq (x_1,\ldots,x_{n-k})$, from which it follows that $F$ contributes to $\beta_{i,i+j}(S/(L+P))$ with at most
$$
\sum_{\ell=1}^{n-j}\binom{\ell-1}{i-j-1}=\binom{n-j}{i-j}.
$$

{\sf Case 3.1 (b):} Now suppose that $f>q$ for all $f\in F$. As $F\neq \emptyset$, such $f$ exists. If $p>1$, then, as $L$ is squarefree lex and $x_px_q\in L$, we have $x_{\ell}x_f \in L$ for all $1\leq \ell \leq p-1$. It follows that $x_F\cdot x_{\ell}=x_{F\setminus\{f\}}\cdot (x_\ell\cdot x_f)\in L$ for $1\leq \ell\leq p-1$, which implies $(x_1,\ldots,x_{p-1})\subseteq (L:x_F)$. Moreover, for any $p$, as $x_px_q\in L$, we also have $x_px_\ell\in (L:x_F)$ for $p+1\leq \ell\leq q$. Similar as in Case 3.1 (a) we can assume that, after reordering (and relabeling) the variables, $(L:x_F)$ is a squarefree lex ideal in $S$. As the order of $x_1,\ldots,x_q$ is not affected by this reordering, the previous discussion implies 
\begin{align*}
G(L:x_F)_{j-k+1}\subseteq &\{u\in\Mon_{j-k+1}(x_{p+1},\ldots,x_{n-k})~:~u \mbox{ squarefree}\}\cup\\
& \{x_pu~:u\in \Mon_{j-k}(x_{q+1},\ldots,x_{n-k}),\;u\mbox{ squarefree}\}
\end{align*}
if $j\neq k$.  Using \Cref{lemma:EK-squarefree} we thus obtain
\begin{align*}
\beta_{i-k,i+j-2k}(S/(L:x_F))\leq \sum_{\ell=p+1+j-k}^{n-k}\binom{\ell-1-p}{j-k}\binom{\ell-j+k-1}{i-k-1}+\sum_{\ell=q+j-k}^{n-k}\binom{\ell-1-q}{j-k-1}\binom{\ell-j+k-1}{i-k-1}
\end{align*}
if $j\neq k$. 
For $j=k$, a similar computation as in Case 3.1 (a) shows that $F$ contributes to $\beta_{i,i+j}(S/(L+P))$ with at most $\binom{n-j}{i-j}$.\\

{\sf Case 3.2:} Consider $F\in \binom{[n]}{k}$ such that $p\in F$ and $f>q$ for all $f\in F\setminus\{p\}$. As $x_px_q\in L$ and as $L$ is squarefree lex, it follows that $(x_1,\ldots,x_{p-1},x_{p+1},\ldots,x_q)\subseteq (L:x_F)$. As in Case 3.1, we can assume that after a suitable reordering (and relabeling) of the variables $(L:x_F)$ is a squarefree lex ideal in $S$. (Note that after relabeling $(L:x_F)$ contains $x_1,\ldots,x_{q-1}$.) We infer that
$$
G(I:x_F)_{j-k+1}\subseteq \{u\in\Mon_{j-k+1}(x_q,\ldots,x_{n-k})~:~u\mbox{ squarefree}\}, 
$$
if $j\neq k$ and it hence follows from \Cref{lemma:EK-squarefree} that
\begin{equation*}
\beta_{i-k,i+j-2k}(S/(L:x_F))\leq \sum_{\ell=q+j-k}^{n-k}\binom{\ell-q}{j-k}\binom{\ell-j+k-1}{i-k}
\end{equation*}
if $\neq k$. For $j=k$, it follows from the same arguments as in Case 3.1 (a) that the set $F$ contributes to $\beta_{i,i+j}(S/(L+P))$ with at most $\binom{n-j}{i-j}$.\\

{\sf Case 4:} If $F=\emptyset$, then clearly $(L:x_F)=L$. As $x_px_q\in L$, we obtain that
\begin{align*}
G(L)_{j+1}\subseteq &\{u\in\Mon_{j+1}(x_{p+1},\ldots,x_{n})~:~u \mbox{ squarefree}\}\cup\\
& \{x_pu~:u\in \Mon_{j}(x_{q+1},\ldots,x_{n}),\;u\mbox{ squarefree}\}
\end{align*}
for $j\geq 2$. 
The same computation as in Case 3.1 (b) now yields that
$$
\beta_{i,i+j}(S/(L:x_F))\leq \sum_{\ell=p+1+j}^{n}\binom{\ell-1-p}{j}\binom{\ell-j-1}{i-1}+\sum_{\ell=q+j}^{n}\binom{\ell-1-q}{j-1}\binom{\ell-j-1}{i-1}.
$$

Combining Case 1--4, we finally obtain for $i>0$ and $j>1$:
\begin{align*}
&\beta_{i,i+j}(S/(I+P))\leq \beta_{i,i+j}(S/(L+P))\\
=& \underbrace{\binom{n-j}{i-j}\left(\binom{n-p}{j}-\binom{n-q}{j}\right)}_{\text{Case 3.1(a), }j=k}+\underbrace{\binom{n-j}{i-j}\binom{n-q}{j}}_{\text{Case 3.1(b), } j=k}+\underbrace{\binom{n-j}{i-j}\binom{n-q}{j-1}}_{\text{Case 3.2, } j=k}\\
&+\sum_{k=1}^{j-1}\Bigg[\underbrace{\left(\binom{n-p}{k}-\binom{n-q}{k}\right)\sum_{\ell=p+j-k+1}^{n-k}\binom{\ell-p-1}{j-k}\binom{\ell-j+k-1}{i-k-1}}_{\text{Case 3.1(a)}}\\
&+\underbrace{\binom{n-q}{k}\left(\sum_{\ell=p+j-k+1}^{n-k}\binom{\ell-1-p}{j-k}\binom{\ell-j+k-1}{i-k-1}+\sum_{\ell=q+j-k}^{n-k}\binom{\ell-q-1}{j-k-1}\binom{\ell-j+k-1}{i-k-1}\right)}_{\text{Case 3.1(b)}}\\
&+\underbrace{\binom{n-q}{k-1}\sum_{\ell=q+j-k}^{n-k}\binom{\ell-q}{j-k}\binom{\ell-j+k-1}{i-k-1}}_{\text{Cases 3.2}}\Bigg]\\
=&\sum_{k=0}^{j-1}\Bigg[\binom{n-p}{k}\sum_{\ell=p+j-k+1}^{n-k}\binom{\ell-p-1}{j-k}\binom{\ell-j+k-1}{i-k-1}\\
&+\binom{n-q}{k}\sum_{\ell=q+j-k}^{n-k}\binom{\ell-q-1}{j-k-1}\binom{\ell-j+k-1}{i-k-1}+\binom{n-q}{k-1}\sum_{\ell=q+j-k}^{n-k}\binom{\ell-q}{j-k}\binom{\ell-j+k-1}{i-k-1}\Bigg]\\
&+\binom{n-j}{i-j}\left(\binom{n-p}{j}-\binom{n-q}{j}+\binom{n-q+1}{j}\right).
\end{align*}
This completes the proof.
\end{proof}
There might be several ways to simplify the bound of \Cref{thm:BoundsBettiLexPlusSquares} by losing tightness. However, we decided to state it in the best possible form.

	\subsection{Application: Balanced Cohen-Macaulay complexes revisited}
	The aim of this section is to use \Cref{thm:BoundsBettiLexPlusSquares} in order to get bounds for the graded Betti numbers of balanced Cohen-Macaulay complexes. 
	
Our starting point is the following result due to Stanley (see \cite[Chapter III, Proposition 4.3]{Stanley-greenBook} or \cite{St79}):
	\begin{lemma}\label{sta_balanced}
	Let $\Delta$ be a $(d-1)$-dimensional balanced simplicial complex with vertex partition $V=\displaystyle\bigcupdot_{i=1}^d V_i$ and let $\theta_i:=\displaystyle\sum_{v\in V_i}x_v$, for $1\leq i\leq d$. Then:
	\begin{enumerate}
		\item[(i)] $\theta_1,\ldots,\theta_d$ is an l.s.o.p. for $\FF[\Delta]$.
		\item[(ii)] $x_v^2\in I_\Delta+(\theta_1,\ldots,\theta_d)\subseteq \FF[x_v~:~v\in V]$ for all $v\in V$.
	\end{enumerate}
	\end{lemma}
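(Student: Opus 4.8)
The plan is to prove both parts by a direct combinatorial-algebraic analysis of the Stanley-Reisner ideal together with the chosen linear forms $\theta_i = \sum_{v\in V_i} x_v$.

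\textbf{Part (i).} For the statement that $\theta_1,\ldots,\theta_d$ is an l.s.o.p., I would argue that it suffices to show $\dim \FF[\Delta]/(\theta_1,\ldots,\theta_d)\FF[\Delta] = 0$, since $\dim\FF[\Delta]=d$ and modding out by a single linear form drops the dimension by at most one; hence if the full quotient is Artinian, each intermediate quotient must have had dimension exactly $d-i$. To see that the quotient is Artinian, I would use the standard criterion (see e.g.\ \cite{Stanley-greenBook}) that $\theta_1,\ldots,\theta_d$ is an l.s.o.p.\ for $\FF[\Delta]$ if and only if for every face $F\in\Delta$ the images of the $\theta_i$ span the linear forms of $\FF[\lk_\Delta(F)]$ restricted appropriately — equivalently, the restriction of $(\theta_1,\ldots,\theta_d)$ to each facet's coordinate subring is a system of parameters there. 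Concretely: for a facet $F$, which by balancedness meets each color class $V_i$ in exactly one vertex $v_i$ (here we use that facets of a balanced complex, if it is pure, have exactly one vertex per class; if $\Delta$ is not pure one works with maximal faces and the same argument gives dimension $\le$ the size), the image of $\theta_i$ in $\FF[x_v : v\in F]$ is $x_{v_i}$ plus variables that vanish on $F$, so modulo $I_\Delta$ the $\theta_i$ restrict to the generators of the polynomial ring on $F$. Thus on each facet the quotient is Artinian, and since $\FF[\Delta]$ is a subring-wise union over facets, the global quotient is Artinian. I would phrase this using the combinatorial characterization of l.s.o.p.'s for Stanley-Reisner rings directly.

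\textbf{Part (ii).} This is the crux and is essentially a finite computation once set up correctly. Fix $v\in V$ and say $v\in V_i$. I want to show $x_v^2\in I_\Delta+(\theta_1,\ldots,\theta_d)$. Start from $\theta_i = \sum_{w\in V_i} x_w$, so in the quotient $\FF[\Delta]/(\theta_1,\ldots,\theta_d)$ we have $x_v \equiv -\sum_{w\in V_i, w\ne v} x_w$. Multiply by $x_v$: then $x_v^2 \equiv -\sum_{w\in V_i, w\ne v} x_v x_w$. But for $w\ne v$ both in $V_i$, the pair $\{v,w\}$ is monochromatic, hence $\{v,w\}\notin\Delta$ by balancedness, so $x_v x_w\in I_\Delta$ and each term on the right is $0$ in the quotient. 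Therefore $x_v^2\equiv 0$, which is exactly the claim. The only subtlety is to make sure one is working in the quotient ring and that "$x_vx_w \in I_\Delta$ for monochromatic pairs" is invoked correctly — this is immediate from \Cref{defbalanced}.

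\textbf{Main obstacle.} The genuinely delicate point is part (i): verifying the parameter property cleanly. The slickest route is to quote the known combinatorial criterion for l.s.o.p.'s of Stanley-Reisner rings (a set of linear forms $\ell_1,\ldots,\ell_d$ is an l.s.o.p.\ iff their restrictions to each facet's variable set form a regular sequence, equivalently span the degree-one part modulo the facet), and then observe that balancedness makes the coloring-sum forms $\theta_i$ restrict to a coordinate system on every facet. I expect to spend most of the write-up making that restriction argument precise (handling the non-pure case by working with all maximal faces), while part (ii) is a two-line computation as sketched above. Alternatively, one can deduce (i) as a consequence of (ii) together with a Hilbert-function count, but invoking the standard criterion is cleaner.
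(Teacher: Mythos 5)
Your proof is correct; the paper itself does not supply a proof of this lemma, attributing it instead to Stanley (citing \cite{St79} and \cite[Chapter III, Proposition 4.3]{Stanley-greenBook}), so there is nothing in the source to compare against line by line.

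A couple of small remarks on your write-up. Part (ii) is exactly the standard two-line computation and is stated cleanly: from $\theta_i = 0$ in the quotient one gets $x_v = -\sum_{w\in V_i\setminus\{v\}}x_w$, and multiplying by $x_v$ kills every term since $\{v,w\}$ is a monochromatic pair and hence a non-face. For part (i), the argument via the Kind--Kleinschmidt criterion is sound, but a few phrases are looser than they need to be: the restriction $\theta_i|_F$ (setting $x_w=0$ for $w\notin F$) is simply $x_{v_i}$ if $F\cap V_i=\{v_i\}$ and $0$ otherwise, so ``plus variables that vanish on $F$'' and ``modulo $I_\Delta$'' are not the right way to describe it; and the rank-$|F|$ condition holds automatically for all (not just maximal) faces once it holds for facets, because a subset of linearly independent columns is linearly independent. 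Purity plays no role: for any facet $F$ of a (possibly non-pure) balanced complex, exactly $|F|$ of the restricted forms are the coordinate functions $x_v$, $v\in F$, and the rest restrict to zero, so the rank is $|F|$. Finally, as you yourself observe, the cleanest route is to prove (ii) first and then deduce (i): since (ii) forces $S/(I_\Delta+(\Theta))$ to be a quotient of $S/(x_v^2 : v\in V)$, it is finite-dimensional, and since each linear form can drop Krull dimension by at most one, $\Theta$ must be an l.s.o.p.; this avoids invoking Kind--Kleinschmidt altogether.
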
 
	 An l.s.o.p. as in the previous lemma is also referred to as a \emph{colored l.s.o.p.} of $\FF[\Delta]$. If $\Delta$ is strongly connected, which is in particular true if $\Delta$ is  Cohen-Macaulay, then a coloring is unique up to permutation and there is just one colored l.s.o.p. of $\FF[\Delta]$. 

An almost immediate application of \Cref{thm:BoundsBettiLexPlusSquares}, combined with \Cref{sta_balanced} (ii) yields the desired bound for the graded Betti numbers of a balanced Cohen-Macaulay complex:

\begin{theorem}\label{thm:bound_preciseSquares}
	Let $\Delta$ be a $(d-1)$-dimensional balanced Cohen-Macaulay complex with vertex partition $V=\displaystyle\bigcupdot_{i=1}^d V_i$. Let $n:=|V|$, $n_i:=\left| V_i\right|$ and $b:=\displaystyle\sum_{i=1}^d\binom{n_i-1}{2}$. Let $x_px_q$ be the $b$\textsuperscript{th} largest squarefree degree $2$ monomial of $\FF[x_1,\ldots,x_{n-d}]$ in lexicographic order with $p\leq q$. Then
\begin{align*}
\beta_{i,i+j}(\FF[\Delta])\leq \sum_{k=0}^{j-1}\Bigg[&\binom{n-d-p}{k}\sum_{\ell=p+j-k+1}^{n-d-k}\binom{\ell-p-1}{j-k}\binom{\ell-j+k-1}{i-k-1}\\
+&\binom{n-d-q}{k}\sum_{\ell=q+j-k}^{n-d-k}\binom{\ell-q-1}{j-k-1}\binom{\ell-j+k-1}{i-k-1}\\
+&\binom{n-d-q}{k-1}\sum_{\ell=q+j-k}^{n-d-k}\binom{\ell-q}{j-k}\binom{\ell-j+k-1}{i-k-1}\Bigg]\\
+&\binom{n-d-j}{i-j}\left( \binom{n-d-p}{j}+\binom{n-d-q}{j-1}\right)
\end{align*}
	for all $i>0$, $j>1$.
\end{theorem}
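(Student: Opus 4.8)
The plan is to reduce \Cref{thm:bound_preciseSquares} to \Cref{thm:BoundsBettiLexPlusSquares} by passing to the Artinian reduction with respect to the colored l.s.o.p., exactly parallel to how \Cref{bound_precise} was deduced from \Cref{thm:BettiCM}. First I would set $S=\FF[x_v:v\in V]$ and, invoking \Cref{sta_balanced}(i), let $\Theta=(\theta_1,\dots,\theta_d)$ be the colored l.s.o.p. of $\FF[\Delta]$. Since $\Delta$ is Cohen-Macaulay, $\Theta$ is a regular sequence, so by \Cref{technicalbetti}(ii) the graded Betti numbers are unchanged: $\beta_{i,i+j}^S(\FF[\Delta])=\beta_{i,i+j}^{S/\Theta S}(\FF[\Delta]/\Theta\FF[\Delta])$. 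Identifying $S/\Theta S\cong R:=\FF[x_1,\dots,x_{n-d}]$ as graded rings, we get a homogeneous ideal $I\subseteq R$ with $R/I\cong \FF[\Delta]/\Theta\FF[\Delta]$ and matching Betti numbers. By \Cref{sta_balanced}(ii), $x_v^2\in I_\Delta+(\Theta)$ for all $v\in V$; after the identification this says the ideal $I\subseteq R$ contains the squares $P=(x_1^2,\dots,x_{n-d}^2)$. Also $h_1(\Delta)=\dim_\FF(R/I)_1$ forces $I$ to contain no linear form.

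The next step is to check the hypothesis on $\dim_\FF(R/I)_2$. Since $\Delta$ is Cohen-Macaulay, \Cref{hiCM} gives $\dim_\FF(R/I)_2=h_2(\Delta)$. I would combine this with a balanced analog of \Cref{boundh2}: because $I\supseteq P$, the count of degree-$2$ monomials must be corrected for the squares, and the bound $h_2(\Delta)\le\binom{n-d}{2}-\sum_{i=1}^d\binom{n_i-1}{2}$ should hold — the point being that in the Artinian reduction of a balanced complex one loses, within each color class, both the monochromatic non-edges and the relations among the squares, which after quotienting by $\theta_i$ contributes $\binom{n_i-1}{2}$ rather than $\binom{n_i}{2}$. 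Concretely, the number $b=\sum_{i=1}^d\binom{n_i-1}{2}$ in the statement is exactly the guaranteed number of degree-$2$ generators of $I$ beyond the squares, i.e. $\dim_\FF(R/(I+P))_2=\dim_\FF(R/I)_2\le\binom{n-d}{2}-b$ (using $R/I=R/(I+P)$ since $P\subseteq I$). I would verify this numerically via the identity $h_2(\Delta)=\binom{d}{2}-(d-1)f_0(\Delta)+f_1(\Delta)$ together with $f_1(\Delta)\le\binom{n}{2}-\sum_i\binom{n_i}{2}$, tracking how the square relations reduce the effective bound; this is the one genuinely new computation and the main place an off-by-one could creep in.

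Once these hypotheses are in place, I would apply \Cref{thm:BoundsBettiLexPlusSquares} to the ideal $I\subseteq R$ in the polynomial ring $R$ on $n-d$ variables, with the integer $b$ as above and $x_px_q$ the $b$\textsuperscript{th} largest squarefree degree-$2$ monomial in $\FF[x_1,\dots,x_{n-d}]$. Substituting $n-d$ for $n$ throughout the displayed bound of \Cref{thm:BoundsBettiLexPlusSquares} yields precisely the claimed inequality for $\beta_{i,i+j}(\FF[\Delta])$, for all $i>0$ and $j>1$. I expect the main obstacle to be nothing deep but rather the careful bookkeeping in the $h_2$ bound and making sure the reduction is clean: that $\Theta$ really is a regular sequence (immediate from Cohen-Macaulayness), that $P\subseteq I$ after the identification (immediate from \Cref{sta_balanced}(ii)), and that the number of guaranteed degree-$2$ generators is exactly $b=\sum_i\binom{n_i-1}{2}$ and not something shifted. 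Everything else is a direct citation of \Cref{technicalbetti}, \Cref{hiCM}, \Cref{sta_balanced}, and \Cref{thm:BoundsBettiLexPlusSquares}.
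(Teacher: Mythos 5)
Your proposal is correct and follows essentially the same route as the paper: Artinian reduction via the colored l.s.o.p.\ (\Cref{sta_balanced}), invariance of Betti numbers under a regular sequence (\Cref{technicalbetti}(ii)), the squares landing in the reduced ideal, and a direct application of \Cref{thm:BoundsBettiLexPlusSquares} with $n$ replaced by $n-d$. One small remark on the step you flag as the ``one genuinely new computation'': there is in fact nothing new to prove there. The bound you want, $h_2(\Delta)\leq\binom{n-d}{2}-\sum_{i=1}^d\binom{n_i-1}{2}$, is exactly the bound of \Cref{boundh2}, $h_2(\Delta)\leq\binom{n-d+1}{2}-\sum_{i=1}^d\binom{n_i}{2}$, rewritten via the identity $\binom{n-d+1}{2}-\binom{n-d}{2}=n-d=\sum_{i=1}^d(n_i-1)=\sum_{i=1}^d\bigl(\binom{n_i}{2}-\binom{n_i-1}{2}\bigr)$; so no separate argument about ``losing relations among the squares'' is needed, and the off-by-one you were worried about does not arise.
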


\begin{proof}
The proof follows exactly along the same arguments as the one of \Cref{bound_precise}, using the colored l.s.o.p. of $\FF[\Delta]$. By \Cref{sta_balanced} it then holds that the ideal $(\Theta)+I_\Delta$ contains the squares of the variables. It remains to observe that under the isomorphism $\FF[x_1,\ldots,x_n]/(\Theta)\cong R$, the ideal $P=(x_1^2,\ldots,x_n^2)\subseteq \FF[x_1,\ldots,x_n]$ is mapped to a homogeneous ideal containing $(x_1^2,\ldots,x_{n-d}^2)$ and thus $\FF[\Delta]/\Theta\cong R/(I+P)$ for a homogeneous ideal $I\subseteq R$ (not containing linear forms). We further observe that
$$
\dim_\FF (R/(I+P))_2=h_2(\Delta)\leq \binom{n-d+1}{2}-\sum_{i=1}^{d}\binom{n_i}{2}=\binom{n-d}{2}-\sum_{i=1}^d\binom{n_i-1}{2}.
$$
The claim now follows from \Cref{thm:BoundsBettiLexPlusSquares}.
\end{proof}

	\begin{example}
	As in \Cref{example:bound}, we consider $3$-dimensional balanced Cohen-Macaulay complexes with $3$ vertices in each color class, i.e., $n_i=3$ for $1\leq i\leq 4$. We have $b:=\sum_{i=1}^4\binom{3}{2}-8=4$ and $x_1x_5$ is the $4$\textsuperscript{th} largest monomial of degree $2$ in variables $x_1,\ldots,x_8$. The bounds from \Cref{thm:bound_preciseSquares} are recorded in the following table:
	\begin{table}[H]
		\[\begin{array}{r|l|l|l|l|l|l|l|l|c}
		j\setminus i & 0 &1 & 2 & 3 & 4 & 5 & 6 & 7 & 8  \\ \hline
		2&0&38&292&827&1249&1125&609&184&24\\
		3&0&36&267&885&1529&1510&877&280&38\\\
		4&0&21&161&533&1024&1145&727&249&36\\
		\end{array}
		\]
			\label{tab:table2}
	\end{table}	
	Comparing those bounds with the ones from \Cref{tab:table1}, we see that the lex-plus-squares approach gives better bounds for all entries of the Betti table. The improvement is more significant in the lower rows of the Betti tables. 
	\end{example}
\begin{remark}
	Consider again a 3-dimensional balanced Cohen-Macaulay complex $\Delta$ on 12 vertices, but with a different color partition, namely $n_1=1$, $n_2=3$, and $n_3=n_4=4$. Then since every facet must contain the unique vertex of color $1$, $\Delta$ is a cone, hence contractible. \Cref{thm:bound_preciseSquares} yields $\beta_{8,12}(\FF[\Delta])=\dim_{\FF}\widetilde{H}_{3}(\Delta;\FF)\leq 35$. This shows that the bound is not necessarily tight.
\end{remark}
	
\section{The linear strand for balanced pseudomanifolds}\label{section:pseudo}
The aim of  this section is to study the linear strand of the minimal graded free resolution of the Stanley-Reisner ring of a balanced normal pseudomanifold. In particular, we will provide upper bounds for the graded Betti numbers in the linear strand. Previously, such bounds have been shown for general (not necessarily balanced) pseudomanifolds by Murai \cite[Lemma 5.6 (ii)]{MUR} and it follows from a result by Hibi and Terai \cite[Corollary 2.3.2]{TH} that they are tight for stacked spheres. 
We start by recalling those results and by introducing some notation.

Let $\Delta$ and $\Gamma$ be $(d-1)$-dimensional pure simplicial complexes and let $F\in \Delta$ and $G\in \Gamma$ be facets, together with a bijection $\varphi:F\to G$. The \emph{connected sum} of $\Delta$ and $\Gamma$ is the simplicial complex obtained from $\Delta\setminus \{F\}\cup\Gamma\setminus \{G\}$ by identifying $v$ with $\varphi(v)$ for all $v\in F$. A \emph{stacked $(d-1)$-sphere} on $n$ vertices is a $(d-1)$-dimensional simplicial complex $\Delta$ obtained via the connected sum of $n-d$ copies of the boundary of the $d$-simplex. The mentioned results of Murai \cite[Lemma 5.6 (ii)]{MUR} and Hibi and Terai \cite[Corollary 2.3.2]{TH} can be summarized as follows: 

\begin{lemma}\label{standardstacked}
Let $d\geq 3$. Let $\Delta$ be a $(d-1)$-dimensional normal pseudomanifold with $n$ vertices. Then:
\begin{equation*}
\beta_{i,i+1}(\mathbb{F}[\Delta])\leq i\binom{n-d}{i+1} \quad \mbox{ for all } i\geq 0.
\end{equation*}\\
Moreover, those bounds are attained if $\Delta$ is a stacked sphere.
\end{lemma}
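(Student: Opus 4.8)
The plan is to reduce everything to a known upper bound for the graded Betti numbers in the linear strand of an arbitrary normal pseudomanifold (which is the content of Murai's \cite[Lemma 5.6 (ii)]{MUR}) and then to verify equality for stacked spheres via Hibi--Terai \cite[Corollary 2.3.2]{TH}. So the proof is essentially an exercise in correctly assembling these two ingredients, together with Hochster's formula (\Cref{Hochster}). First I would recall that $\beta_{i,i+1}(\FF[\Delta]) = \sum_{W\subseteq V(\Delta),\, |W|=i+1} \dim_{\FF}\widetilde{H}_0(\Delta_W;\FF)$, so that the linear strand Betti numbers count, over all $(i+1)$-subsets $W$, one less than the number of connected components of the induced subcomplex $\Delta_W$. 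The upper bound $i\binom{n-d}{i+1}$ then says, roughly, that the worst case occurs when there is a ``core'' $(d-1)$-simplex worth of vertices ($d$ of them) that is joined to everything, and the remaining $n-d$ vertices are as disconnected as possible from one another in induced subcomplexes.

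For the inequality itself I would simply invoke Murai's result: for a normal pseudomanifold $\Delta$ of dimension $d-1\geq 2$ on $n$ vertices, one has $\beta_{i,i+1}(\FF[\Delta])\leq i\binom{n-d}{i+1}$ for all $i\geq 0$. If one wanted a self-contained argument, the mechanism is that in a normal pseudomanifold the link of every vertex is itself a normal pseudomanifold (hence connected, using $d\geq 3$ so that links of vertices have dimension $\geq 1$), which forces induced subcomplexes to be highly connected; a careful double-counting of pairs $(v,W)$ with $v\in W$ and $v$ in a ``small'' component, combined with the fact that each vertex's link is connected, yields the stated bound. But since this is precisely \cite[Lemma 5.6 (ii)]{MUR}, I would quote it rather than reprove it.

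For the equality statement, let $\Delta$ be a stacked $(d-1)$-sphere on $n$ vertices, i.e.\ a connected sum of $n-d$ copies of $\partial\Delta^{d}$. The cleanest route is to cite \cite[Corollary 2.3.2]{TH}, which computes the graded Betti numbers of the Stanley--Reisner ring of a stacked sphere explicitly and shows $\beta_{i,i+1}(\FF[\Delta]) = i\binom{n-d}{i+1}$. Alternatively, one can argue combinatorially: a stacked sphere is the clique complex of a graph obtained by repeatedly adding a vertex cone over a clique of size $d$, so $\Delta$ has a $(d-1)$-simplex $\sigma$ such that every other vertex is adjacent to all of $\sigma$; hence for $W\supseteq$ (some vertex of $\sigma$)... more precisely, the induced subcomplex on a set $W$ disjoint from $\sigma$ consists of exactly $|W|$ isolated components when $W$ avoids the ``stacking edges'', and a counting argument identical to the one in \Cref{bound_attained} (where $\Delta_{d-2}\ast\Gamma_{n-d+1}$ realizes the same numbers) shows the bound is attained. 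In fact the simplest observation is that $\Delta_{d-2}\ast\Gamma_{n-d+1}$ from \Cref{bound_attained} already gives $\beta_{i,i+1} = i\binom{n-d+1}{i+1}$, which is even larger; for stacked spheres one gets $n-d$ in place of $n-d+1$ because the apex structure uses $d$ rather than $d-1$ ``core'' vertices.

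The main obstacle is purely expository rather than mathematical: one must be careful about the hypothesis $d\geq 3$ (so that links of vertices are connected, which is where normality is genuinely used) and about the indexing convention in Hochster's formula, and one must make sure the ``stacked sphere attains the bound'' claim is matched to the correct reference. Since both \cite{MUR} and \cite{TH} do the heavy lifting, the proof of \Cref{standardstacked} is a one-paragraph citation-and-assembly argument, and I would present it as such.
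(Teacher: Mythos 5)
Your proposal takes exactly the paper's route: the paper does not prove this lemma from scratch but simply presents it as a summary of Murai \cite[Lemma 5.6(ii)]{MUR} (for the inequality) and Hibi--Terai \cite[Corollary 2.3.2]{TH} (for equality at stacked spheres), and you assemble the same two citations the same way.

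One caution about your ``alternative'' combinatorial sketch, though: a stacked $(d-1)$-sphere does \emph{not} in general contain a single $(d-1)$-simplex $\sigma$ with every other vertex adjacent to all of $\sigma$. That structure only describes one very special combinatorial type of stacked sphere (stacking repeatedly over a fixed facet); a generic stacked sphere built as a connected sum along varying facets has no such universal ``core.'' The Hibi--Terai result is precisely that the Betti numbers are independent of the combinatorial type, so any direct argument would have to handle this more carefully (e.g.\ by induction on the number of connected-sum steps, as the paper does for the cross-polytopal analogue in \Cref{thm:rec}). Since you ultimately defer to the citation, the proof stands; just drop or rework the alternative sketch if you want a self-contained account.
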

We remark that, in \cite{TH}, the authors provide explicit formulas not only for the Betti numbers of the linear strand but for \emph{all} graded Betti numbers of a stacked sphere. In particular, it is shown that these numbers only depend on the number of vertices $n$ and the dimension $d-1$. 

In order to prove a balanced analog of the first statement of \Cref{standardstacked}, the following result due to Fogelsanger \cite{Fogelsanger} will be crucial (see also \cite[Section 5]{NovikSwartz}). 
	
\begin{lemma}\label{fogel}
	Let $d\geq 3$. Let $\Delta$ be a $(d-1)$-dimensional normal pseudomanifold. Then there exist linear forms $\theta_1,\dots,\theta_{d+1}$ such that the multiplication map 
	$$\times \theta_i:\;\left( \mathbb{F}[\Delta]/(\theta_1,\dots,\theta_{i-1})\mathbb{F}[\Delta]\right)_1\longrightarrow\left( \mathbb{F}[\Delta]/(\theta_1,\dots,\theta_{i-1})\mathbb{F}[\Delta]\right)_2$$
	is injective for all $1\leq i\leq d+1$.
\end{lemma}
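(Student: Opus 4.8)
The plan is to recast the statement as a rigidity property of the $1$-skeleton $G:=\Skel_1(\Delta)$ of $\Delta$ and then quote Fogelsanger's theorem. First I would reduce to the case of an infinite field $\FF$: extending scalars to an infinite extension affects neither the graded Betti numbers of $\FF[\Delta]$ (flat base change) nor the existence of linear forms with the asserted injectivity, and over an infinite field ``generic'' then means ``outside a fixed proper Zariski-closed subset of the space $(\FF^n)^{d+1}$ of coefficient vectors''. The technical heart of the reduction is the classical dictionary between generic rigidity and multiplication maps on face rings, going back to Lee and Kalai and conveniently packaged in \cite[Section 5]{NovikSwartz}: for a simplicial complex $\Gamma$ of dimension $\ge 1$ whose graph has at least $r+1$ vertices, and for generic linear forms $\ell_1,\dots,\ell_{r+1}\in S_1$, the multiplication map
\[
\times\ell_{r+1}\colon\bigl(\FF[\Gamma]/(\ell_1,\dots,\ell_r)\FF[\Gamma]\bigr)_1\longrightarrow\bigl(\FF[\Gamma]/(\ell_1,\dots,\ell_r)\FF[\Gamma]\bigr)_2
\]
is injective if and only if the graph of $\Gamma$ is generically $r$-rigid (note that the degree $1$ and $2$ parts of $\FF[\Gamma]$ depend only on $\Skel_1(\Gamma)$, so this really is a statement about graphs).

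With this in hand I would fix a generic tuple $(\theta_1,\dots,\theta_{d+1})\in(S_1)^{d+1}$. For a fixed $i\in\{1,\dots,d+1\}$ set $r:=i-1\le d$; then $(\theta_1,\dots,\theta_r)$ is again generic (a generic point of $(\FF^n)^{d+1}$ maps outside any prescribed proper closed subset of $(\FF^n)^r$ under the coordinate projection) and $\theta_{r+1}=\theta_i$ is generic over it, so by the dictionary the assertion for the index $i$ amounts to the statement that $G$ is generically $r$-rigid. I would then verify this for every $0\le r\le d$. For $r=0$ it is immediate (and one can also see it directly: $x_v\mapsto\theta_1x_v$ has nonzero $x_v^2$-coordinate and $x_v^2\ne 0$ in $\FF[\Delta]$, so $\times\theta_1\colon\FF[\Delta]_1\to\FF[\Delta]_2$ is injective). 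For $r=d$ this is exactly Fogelsanger's theorem \cite{Fogelsanger}: the $1$-skeleton of a normal pseudomanifold of dimension $d-1\ge 2$ is generically $d$-rigid; here one uses that the fundamental class of such a complex is a minimal cycle in Fogelsanger's sense, which is precisely where the connectivity of links of faces of dimension $\le d-3$ enters. For $0<r<d$ it follows from the standard inheritance principle in rigidity theory --- generic $d$-rigidity of a graph on at least $d+1$ vertices implies generic $(d-1)$-rigidity, hence generic $r$-rigidity for all $r\le d$ --- which applies since $\Delta$ has at least $d+1$ vertices. Assembling the three ranges of $r$ proves the lemma (and shows in passing that $\theta_1,\dots,\theta_d$ is a linear system of parameters for $\FF[\Delta]$, even though $\FF[\Delta]$ need not be Cohen--Macaulay --- this is exactly the point where the non-trivial rigidity input is needed and a naive Hilbert-function argument fails).

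The genuine obstacle is Fogelsanger's theorem itself, whose proof --- an intricate induction on minimal cycle complexes --- is far from routine, so I would quote it as a black box; everything else is either standard rigidity theory (the inheritance principle, the algebra--rigidity dictionary) or elementary bookkeeping about Zariski-generic choices. One point to keep an eye on: both Fogelsanger's argument and the rigidity dictionary are most transparent over $\RR$ or in characteristic $0$, so in positive characteristic one should either check that the combinatorial induction survives over the prime field or, since the lemma is only ever used to bound graded Betti numbers, invoke base change to a sufficiently large field as in the first step. A secondary point is to state the dictionary with genericity hypotheses matched exactly to how the $\theta_i$ are chosen, so that one and the same generic tuple $(\theta_1,\dots,\theta_{d+1})$ works simultaneously for all $i$.
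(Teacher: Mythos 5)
The paper does not prove this lemma; it simply states it and cites Fogelsanger \cite{Fogelsanger} together with Novik--Swartz \cite[Section 5]{NovikSwartz}. Your reconstruction is a correct and faithful unpacking of that citation: the reduction to an infinite base field via flat base change, the Lee--Kalai dictionary identifying the kernel of $\times\theta_{r+1}$ on $(\FF[\Delta]/(\theta_1,\dots,\theta_r))_1$ with the stress space of the $1$-skeleton in $\RR^r$, the invocation of Fogelsanger's generic $d$-rigidity of $1$-skeleta of normal $(d-1)$-pseudomanifolds for the top index $i=d+1$, and the handling of the intermediate indices via monotonicity of generic rigidity in the dimension, are exactly the ingredients the authors are pointing at. You are also right to flag the two genuine delicate points: the characteristic issue (Fogelsanger's original argument is over $\RR$, and the algebraic transcription to arbitrary infinite fields is precisely what \cite{NovikSwartz} supplies), and the need to choose one generic tuple $(\theta_1,\dots,\theta_{d+1})$ that works simultaneously for all $i$, which is fine since a finite intersection of dense open conditions in $(\FF^n)^{d+1}$ is dense.

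The only step I would ask you to spell out is the ``inheritance principle'' that generic $d$-rigidity of a graph on at least $d+1$ vertices implies generic $(d-1)$-rigidity, since it is folklore rather than in every textbook. It does hold, and here is the short argument: every $d$-dimensional stress at a generic $p\colon V\to\RR^d$ is automatically a $(d-1)$-dimensional stress for the projection $p'$; conversely, the linear map sending a $(d-1)$-stress $c$ to $\bigl(\sum_{u\sim v}c_{uv}(p_d(u)-p_d(v))\bigr)_{v\in V}\in\RR^V$ has image orthogonal to the $d$-dimensional subspace spanned by $(1)_v,(p_1(v))_v,\dots,(p_{d-1}(v))_v$ (one checks $\sum_v p_j(v)\sum_u c_{uv}(p_d(u)-p_d(v))=0$ using the stress equations in coordinate $j$), so the gap between the two stress spaces is at most $n-d$. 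Comparing coranks of the rigidity matrices then gives $\operatorname{rank} R_{d-1}(G,p')\geq (d-1)n-\binom{d}{2}$, i.e. generic $(d-1)$-rigidity. With that filled in, your proof is complete and is the same route the paper intends by its citations.
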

 Intuitively, the previous result compensates the lack of a regular sequence for normal pseudomanifolds in small degrees, since those need not to be Cohen-Macaulay. 
 
 Recall that a key step for the proofs of \Cref{thm:BettiCM} and \Cref{thm:BoundsBettiLexPlusSquares} was to find upper bounds for the number of generators of the lex ideal and the lex-plus-squares ideal, respectively, of degree $\geq 3$. For the proof of our main result in this section we will use a similar strategy, but since we are interested in the linear strand of the minimal free resolution, we rather need to bound the number of degree $2$ generators in a certain lex-ideal. This will be accomplished via the lower bound theorem for balanced normal pseudomanifolds, which was shown by Klee and Novik \cite[Theorem 3.4]{KN} (see also \cite[Theorem 5.3]{GKN} and \cite[Theorem 4.1]{BrowKlee} for the corresponding result for balanced spheres respectively manifolds and Buchsbaum* complexes).

\begin{theorem}\label{balancedlb}
Let $d\geq 3$ and let $\Delta$ be a $(d-1)$-dimensional balanced normal pseudomanifold. Then 
	$$h_2(\Delta)\geq\frac{d-1}{2}h_1(\Delta).$$
\end{theorem}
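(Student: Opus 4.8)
The plan is to reduce the inequality $h_2(\Delta)\geq\frac{d-1}{2}h_1(\Delta)$ to a statement about edge counts and then exploit balancedness together with the normal pseudomanifold structure via a double counting of incidences between edges (equivalently, ridges) and facets. First I would record the standard translations
$$
h_1(\Delta)=f_0(\Delta)-d=n-d,\qquad
h_2(\Delta)=\binom{d}{2}-(d-1)f_0(\Delta)+f_1(\Delta),
$$
so the claimed inequality is equivalent to
$$
f_1(\Delta)\geq \binom{d}{2}+(d-1)(n-d)+\frac{d-1}{2}(n-d)=\binom{d}{2}+\frac{3(d-1)}{2}(n-d).
$$
Hence everything comes down to a lower bound on the number of edges of a balanced normal pseudomanifold in terms of $n$ and $d$.

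The key step is to bound $f_1$ from below by a local-to-global incidence argument on the color classes. Since $\Delta$ is a $(d-1)$-dimensional normal pseudomanifold, every $(d-2)$-face lies in exactly two facets, and the link of every vertex $v$ is a $(d-2)$-dimensional normal pseudomanifold (this is where connectedness of links of low-dimensional faces is used). Moreover, balancedness passes to vertex links: if $v\in V_i$ then $\lk_\Delta(v)$ is a balanced $(d-2)$-dimensional complex with color classes $V_1,\dots,\widehat{V_i},\dots,V_d$. I would then proceed by induction on $d$. For the base case $d=3$, a balanced normal pseudomanifold is a balanced surface, and one checks the inequality $h_2\geq h_1$ directly: every vertex link is a cycle, which for a $3$-colored surface has even length $\geq 4$, so every vertex has degree $\geq 4$, giving $2f_1=\sum_v \deg v\geq 4f_0$, i.e. $f_1\geq 2f_0$; combined with the Euler/Dehn–Sommerville relations for surfaces this yields $h_2\geq h_1$. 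For the inductive step, I would sum the inductive inequality over all vertex links: for each vertex $v$, $\lk_\Delta(v)$ is a balanced $(d-2)$-dimensional normal pseudomanifold, so by induction
$$
h_2(\lk_\Delta(v))\geq\frac{d-2}{2}\,h_1(\lk_\Delta(v)),
$$
and then express $\sum_v h_1(\lk_\Delta(v))$ and $\sum_v h_2(\lk_\Delta(v))$ in terms of the $f$-numbers of $\Delta$ (using $\sum_v f_k(\lk_\Delta(v))=(k+2)f_{k+1}(\Delta)$). This converts the sum of local inequalities into a linear inequality among $f_0(\Delta),f_1(\Delta),f_2(\Delta)$, which together with the normal-pseudomanifold relation $d\cdot f_{d-1}=2f_{d-2}$ and the Dehn–Sommerville-type identities should collapse to the desired bound on $f_1(\Delta)$.

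The main obstacle I anticipate is the bookkeeping in the inductive step: translating the averaged link inequality into a clean inequality purely in $f_0(\Delta)$ and $f_1(\Delta)$ requires controlling the intermediate $f$-numbers $f_2(\Delta),\dots$, and a priori the sum $\sum_v h_2(\lk_\Delta(v))$ involves $f_2(\Delta)$, not just $f_0,f_1$. One way around this is to work only with $h_1$ and $h_2$ of the links rather than the full $h$-vector, since $h_1(\lk_\Delta(v))$ and $h_2(\lk_\Delta(v))$ depend only on $f_0,f_1,f_2$ of the link, hence only on $f_0(\Delta),f_1(\Delta),f_2(\Delta)$ after summing; one then still needs one more relation to eliminate $f_2(\Delta)$, which is exactly supplied by applying the already-known \emph{non-balanced} lower bound theorem (the classical Barnette/Fogelsanger/Kalai lower bound $g_2\geq 0$, equivalently $h_2\geq h_1$ for $d\geq 4$, which follows from \Cref{fogel}) to $\Delta$ itself, or by a direct counting of the quantity $\sum_{i}\binom{n_i}{2}$ of monochromatic non-edges. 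In fact, the cleanest route may be to bypass the induction entirely: combine \Cref{fogel} (which gives, after modding out $\theta_1,\dots,\theta_{d+1}$, injectivity of multiplication and hence $h_2(\Delta)\geq h_1(\Delta)$ already, but more importantly a well-behaved Artinian reduction) with the balancedness constraint $f_1(\Delta)\leq\binom{n}{2}-\sum_i\binom{n_i}{2}$ from \Cref{boundh2} being tight only in extreme cases, and instead use that each color class $V_i$ has size $n_i\geq 2$ together with the fact that in a normal pseudomanifold each vertex has degree at least… — but since the excerpt supplies \Cref{balancedlb} as a black box from \cite{KN}, the honest proof here is the one via links and induction on $d$ sketched above, with the $d=3$ surface case as the anchor and Dehn–Sommerville relations doing the final algebra.
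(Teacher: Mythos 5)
The paper does not prove this statement: it is invoked verbatim from Klee and Novik, \cite[Theorem~3.4]{KN}, as a black box, and the proof there rests on a balanced rigidity theorem in the spirit of Fogelsanger --- a genuinely different mechanism than the link-induction you sketch. So there is no ``paper's own proof'' to compare against; the right comparison is with \cite{KN}.

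More importantly, the inductive step you propose cannot close. Summing $h$-numbers over vertex links obeys the standard identity
\[
\sum_{v} h_i\bigl(\lk_\Delta(v)\bigr)=(i+1)\,h_{i+1}(\Delta)+(d-i)\,h_i(\Delta),
\]
so if you sum your inductive hypothesis $h_2(\lk v)\geq\tfrac{d-2}{2}h_1(\lk v)$ over all $v$ you obtain
\[
3h_3(\Delta)+(d-2)h_2(\Delta)\;\geq\;\tfrac{d-2}{2}\bigl(2h_2(\Delta)+(d-1)h_1(\Delta)\bigr),
\]
and the $h_2$ terms cancel identically, leaving $3h_3(\Delta)\geq\tfrac{(d-1)(d-2)}{2}h_1(\Delta)$. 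You get a bound on $h_3$, not on $h_2$. This is the same structural obstruction that prevents the classical lower bound theorem $h_2\geq h_1$ from being proved by naive link induction, and it is not repaired by adding the non-balanced inequality $h_2\geq h_1$ or a bound on monochromatic non-edges: none of these lets you trade information about $h_3$ for information about $h_2$. Your own caveat that ``$\sum_v h_2(\lk_\Delta(v))$ involves $f_2(\Delta)$'' is exactly this problem, and the ``one more relation'' you hope for does not exist in this generality. The $d=3$ base case (closed surfaces) is fine, but the induction does not propagate.

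The argument that actually works, and the one Klee and Novik give, is a direct $d$-dimensional rigidity argument: one extends Fogelsanger's rigidity to exploit the color structure (roughly, the graph of $\Delta$ together with suitable monochromatic edges is generically $d$-rigid, and the absence of monochromatic edges in $\Delta$ accounts precisely for the improved coefficient $\tfrac{d-1}{2}$ over the unrestricted bound $h_2\geq h_1$). If you want a self-contained proof you should follow that route --- building on the infinitesimal rigidity input already present in \Cref{fogel} --- rather than the link induction.
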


We can now state the main result of this section:
\begin{theorem}\label{thm:pseudomanifold}
Let $d\geq 3$ and let $\Delta$ be a $(d-1)$-dimensional balanced normal pseudomanifold on $n$ vertices. Let $b:= \frac{(n-d)(n-2d+2)}{2}$ and let $x_px_q$ (where $p\leq q$) be the $b$\textsuperscript{th} largest degree $2$ monomial of $\FF[x_1,\ldots,x_{n-d-1}]$ in lexicographic order. Then 
	\begin{equation}
	\label{eq:pseudomanifold}
		\beta_{i,i+1}(\mathbb{F}[\Delta])\leq (p-1)\binom{n-d-1}{i}-\binom{p}{i+1}+\binom{q}{i}.
	\end{equation}
\end{theorem}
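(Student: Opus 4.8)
The plan is to mimic the strategy of \Cref{thm:BettiCM} and \Cref{bound_precise}, but working only in degree $2$ and exploiting the Fogelsanger sequence from \Cref{fogel} in place of a regular sequence. First I would pick linear forms $\theta_1,\dots,\theta_{d+1}$ as in \Cref{fogel}, so that multiplication by $\theta_i$ is injective in degree $1$ on $\FF[\Delta]/(\theta_1,\dots,\theta_{i-1})\FF[\Delta]$. Modding out $\theta_1,\dots,\theta_{d+1}$ and invoking \Cref{technicalbetti}(i) in the range $k\leq 1$, we get $\beta_{i,i+1}(\FF[\Delta])\leq \beta_{i,i+1}^{R}(R/J)$, where $R\cong\FF[x_1,\dots,x_{n-d-1}]$ and $J\subseteq R$ is the homogeneous ideal with $R/J\cong \FF[\Delta]/(\theta_1,\dots,\theta_{d+1})\FF[\Delta]$. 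Here I should be a little careful: \Cref{technicalbetti}(i) only requires injectivity of multiplication by a single fixed linear form on $R_k$ for all $k\le j$, and we only need $j=1$; so one applies it $d+1$ times successively, at each stage using the injectivity of $\times\theta_i$ guaranteed by \Cref{fogel} (no Cohen–Macaulayness is needed, only the degree-$\leq 2$ behavior).

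Next I would pass to the lex ideal: by \Cref{lemma:Bigatti}, $\beta_{i,i+1}^R(R/J)\leq \beta_{i,i+1}^R(R/J^{\lex})$. Since $J$ contains no linear forms (as $\dim_\FF(R/J)_1=h_1(\Delta)=n-d$, because all $d+1$ maps $\times\theta_i$ were injective in degree $1$, cutting $h_1$ down from $n$ to $n-d-1$... wait — one must recompute: actually $\dim_\FF(R/J)_1 = f_0(\Delta) - (d+1) = n-d-1$, so $R$ has exactly $n-d-1$ variables and $J_1=0$), the lex ideal $J^{\lex}$ is generated in degrees $\geq 2$, and $|G(J^{\lex})_2| = \dim_\FF R_2 - \dim_\FF (R/J)_2 = \binom{n-d}{2} - \dim_\FF(R/J)_2$. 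Now the key input is \Cref{balancedlb}: since $h_2(\Delta)\geq \frac{d-1}{2}h_1(\Delta) = \frac{(d-1)(n-d)}{2}$, and since $\dim_\FF(R/J)_2\le h_2(\Delta)$ need NOT hold (no CM), I instead need the \emph{reverse}: the bound we want is an \emph{upper} bound on $\beta_{i,i+1}$, so I want a \emph{lower} bound on the number of degree-$2$ generators of $J^{\lex}$, i.e. an \emph{upper} bound on $\dim_\FF(R/J)_2$. The lower bound theorem gives a lower bound on $h_2$; the relevant identity is $\dim_\FF(R/J)_2 = \binom{n-d-1+1}{2} - h_1(\Delta) + \text{(something)}$ — more precisely, using the standard relation between the Hilbert function of the Artinian reduction and the $h$-vector combined with \Cref{balancedlb} one extracts $\dim_\FF(R/J)_2 \leq \binom{n-d-1}{2} - b$ with $b = \frac{(n-d)(n-2d+2)}{2}$; this is exactly the computation behind the stated value of $b$, and checking it is the routine-but-delicate arithmetic step.

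Then I would count: $J^{\lex}$ contains at least the $b$ lexicographically largest squarefree-or-not degree-$2$ monomials of $R=\FF[x_1,\dots,x_{n-d-1}]$, so it contains all degree-$2$ monomials $\geq_\lex x_px_q$. By \Cref{lemma:eliker}, $\beta_{i,i+1}^R(R/J^{\lex}) = \sum_{u\in G(J^{\lex})_2}\binom{\max(u)-1}{i-1}$. The minimal degree-$2$ generators lying weakly above $x_px_q$ in lex order are: all $x_\ell x_m$ with $\ell \leq p-1$ (contributing, for each value of $\max=m$ ranging over $2,\dots,n-d-1$, the count of valid $\ell$), together with $x_p x_m$ for $p\le m\le q$. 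Summing $\binom{\max(u)-1}{i-1}$ over these and simplifying via hockey-stick identities ($\sum_{m}\binom{m-1}{i-1}=\binom{m}{i}$, etc.) should collapse to $(p-1)\binom{n-d-1}{i} - \binom{p}{i+1} + \binom{q}{i}$: the first term counts the generators divisible by some $x_\ell$, $\ell<p$, weighted by $\binom{\max-1}{i-1}$ summed over all $\max$ up to $n-d-1$; the correction $-\binom{p}{i+1}$ removes the overcount where $\max(u)<p$ is impossible (i.e. the "missing" bottom block), and $+\binom{q}{i}$ accounts for the extra generators $x_px_{p+1},\dots,x_px_q$. I expect the main obstacle to be precisely this last combinatorial bookkeeping — getting the three binomial terms to come out with the right signs and arguments — together with making airtight the arithmetic that produces the exact threshold $b=\frac{(n-d)(n-2d+2)}{2}$ from \Cref{balancedlb}; the homological-algebra part (Fogelsanger plus \Cref{technicalbetti}(i) plus \Cref{lemma:Bigatti} plus \Cref{lemma:eliker}) is essentially formal.
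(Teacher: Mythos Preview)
Your overall architecture is exactly the paper's: use Fogelsanger's linear forms (\Cref{fogel}) together with \Cref{technicalbetti}(i) to reduce to an Artinian quotient $R/J$ with $R=\FF[x_1,\dots,x_{n-d-1}]$, pass to $J^{\lex}$ via \Cref{lemma:Bigatti}, and evaluate the linear strand by \Cref{lemma:eliker}, with the balanced lower bound theorem (\Cref{balancedlb}) supplying the numerical input.

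However, you have the direction of the key inequality backwards, and as written your argument would produce a \emph{lower} bound on $\beta_{i,i+1}$, not an upper bound. In the Eliahou--Kervaire formula $\beta_{i,i+1}^R(R/J^{\lex})=\sum_{u\in G(J^{\lex})_2}\binom{\max(u)-1}{i-1}$, more degree-$2$ generators give a larger sum; so to bound $\beta_{i,i+1}$ from above you need an \emph{upper} bound on $|G(J^{\lex})_2|$, i.e.\ a \emph{lower} bound on $\dim_{\FF}(R/J)_2$ --- the opposite of what you assert. This is precisely what \Cref{balancedlb} provides. The Fogelsanger injectivity yields the \emph{equality} $\dim_{\FF}(R/J)_2=h_2(\Delta)-h_1(\Delta)$ (each $\theta_i$ cuts the degree-$2$ piece by exactly the current degree-$1$ dimension; no Cohen--Macaulay comparison with $h_2$ is needed), and then $h_2\geq\tfrac{d-1}{2}h_1$ gives $\dim_{\FF}(R/J)_2\geq\tfrac{d-3}{2}(n-d)$, hence
\[
|G(J^{\lex})_2|\;=\;\dim_{\FF}(J^{\lex})_2\;\leq\;\binom{n-d}{2}-\frac{d-3}{2}(n-d)\;=\;b.
\]
Thus $G(J^{\lex})_2$ is \emph{contained in} the set $M=\{u\in\Mon_2(R):u\geq_{\lex}x_px_q\}$ of the $b$ lex-largest degree-$2$ monomials (not ``contains at least'' $M$, as you wrote), and therefore $\beta_{i,i+1}^R(R/J^{\lex})\leq\sum_{u\in M}\binom{\max(u)-1}{i-1}$. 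Your claim ``$J^{\lex}$ contains at least the $b$ lex-largest degree-$2$ monomials'' is neither what \Cref{balancedlb} yields nor what is wanted: even if it held, summing over a subset of $G(J^{\lex})_2$ would give the inequality with the wrong sign. Once the containment is corrected, the combinatorial bookkeeping you describe is indeed the remaining routine step; the paper carries it out by splitting $M$ according to whether $\max(u)\leq p$, $p<\max(u)\leq q$, or $q<\max(u)\leq n-d-1$, and collapsing via Pascal/hockey-stick identities to obtain $(p-1)\binom{n-d-1}{i}-\binom{p}{i+1}+\binom{q}{i}$.
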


\begin{proof}
Let $R':=\FF[x_1,\ldots,x_{n-d-1}]$ and let $\Theta=\{\theta_1,\ldots,\theta_{d+1}\}$ be linear forms given by \Cref{fogel}. Then, as in the proof of \Cref{bound_precise}, we let $J\subseteq R$ be the homogeneous ideal with $\mathbb{F}[\Delta]/\Theta\mathbb{F}[\Delta]\cong R/J$ and we let $J^\lex\subseteq R$ be the lex ideal of $J$.  
Using \Cref{fogel}, \Cref{technicalbetti} and \Cref{lemma:Bigatti} we conclude 
\begin{equation*}
\beta_{i,i+1}(\mathbb{F}[\Delta])\leq\beta_{i,i+1}^{S/\Theta S }(\mathbb{F}[\Delta]/\Theta \mathbb{F}[\Delta])= \beta_{i,i+1}^{R}(R/J)\leq \beta_{i,i+1}^{R}(R/J^{\lex}).
\end{equation*}
To prove inequality \eqref{eq:pseudomanifold} we will compute upper bounds for $\beta_{i,i+1}^{R}(R/J^{\lex})$ using \Cref{lemma:eliker}. For those we need an upper bound for the number of generators of degree $2$ in $J^\lex$. More precisely, we will prove the following claim:\\
{\sf Claim:} $\dim_\FF(J^\lex)_2\leq b$.\\
By the definition of the ideals $J$ and $J^\lex$ we have
\begin{align*}
\dim_{\mathbb{F}}(R/J^{\lex})_2&=\dim_{\mathbb{F}}(\mathbb{F}[\Delta]/\Theta\mathbb{F}[\Delta])_2\\
&=h_2(\Delta)-h_1(\Delta)\\
&\geq \frac{d-1}{2}h_1(\Delta)-h_1(\Delta)\\
&=\frac{d-3}{2}(n-d).
\end{align*}
Here, the second equality follows from the injectivity of the multiplication maps in \Cref{fogel} and the inequality holds by \Cref{balancedlb}. We conclude
$$\dim_{\mathbb{F}}(J^{\lex})_2\leq \binom{n-d}{2}-\frac{d-3}{2}(n-d)=\frac{(n-d)(n-2d+2)}{2}=b,$$
which shows the claim.\\
	Since $\dim_{\mathbb{F}}(R/J^{\lex})_1=n-d-1=\dim_{\mathbb{F}}(R)_1$, the ideal $J^\lex$ does not contain variables. Using the just proven claim we conclude that $G(J^{\lex})_2$ contains at most the $b$ lexicographically largest degree $2$ monomials of $R$, i.e.,
	$$
	G(J^\lex)_2\subseteq \{u\in \Mon_2(R)~:~u\geq_{\lex} x_px_q\}.
	$$
To simplify notation, we set $M:=\{u\in \Mon_2(R)~:~u\geq_{\lex} x_px_q\}$. Using \Cref{lemma:eliker}, we infer:
	 \begin{align*}
	 	\beta_{i,i+1}^{R}(R/J^{\lex})\leq&\displaystyle\sum_{ u\in M}\binom{\max(u)-1}{i-1}\\
=&\displaystyle\sum_{\ell=1}^{p}\sum_{\substack{u\in M\\ \max(u)=\ell}}\binom{\ell-1}{i-1}+\displaystyle\sum_{\ell=p+1}^{q}\sum_{\substack{u\in M\\
	 			\max(u)=\ell}}\binom{\ell-1}{i-1}+\\
	 	&+\displaystyle\sum_{\ell=q+1}^{n-d-1}\sum_{\substack{u\in M\\
	 			\max(u)=\ell}}\binom{\ell-1}{i-1}\\
	 	=& \sum_{\ell=1}^{p}\ell\binom{\ell-1}{i-1}+p\sum_{\ell=p+1}^{q}\binom{\ell-1}{i-1}+(p-1)\sum_{\ell=q+1}^{n-d-1}\binom{\ell-1}{i-1}\\
	 	=& i\binom{p+1}{i+1}+(p-1)\binom{n-d-1}{i}-p\binom{p}{i}+\binom{q}{i}\\
	 	=& (p-1)\binom{n-d-1}{i}-\binom{p}{i+1}+\binom{q}{i}
	 	\end{align*}
	 	for all $i\geq 0$. This finishes the proof.
	 \end{proof}
	 
Note that, unlike the bounds from \Cref{thm:BettiCM} and \Cref{thm:bound_preciseSquares}, the bounds from \Cref{thm:pseudomanifold} do not depend on the sizes of the color classes.

\begin{example}
Let $\Delta$ be a $3$-dimensional balanced pseudomanifold on $12$ vertices, with an arbitrary partition of the vertices into color classes. We have $b=\frac{(n-d)(n-2d+2)}{2}=24$ and $x_5x_6$ is the $24$\textsuperscript{th} largest degree $2$ monomial in variables $x_1,\ldots,x_7$. The bounds for $\beta_{i,i+1}(\FF[\Delta])$ provided by  \Cref{thm:pseudomanifold} are recorded in the following table.
\begin{table}[H]
	\[\begin{array}{r|l|l|l|l|l|l|l|l|c}
	j\setminus i & 0 &1 & 2 & 3 & 4 & 5 & 6 & 7 & 8  \\ \hline
	1&0&24&89&155&154&90&29&4&0\\
	\end{array}
	\]
\end{table}
One should compare those with the bounds provided by \Cref{standardstacked} for arbitrary (not necessarily balanced) pseudomanifolds:
\begin{table}[H]
	\[\begin{array}{r|l|l|l|l|l|l|l|l|c}
	j\setminus i & 0 &1 & 2 & 3 & 4 & 5 & 6 & 7 & 8  \\ \hline
	1&0&28&112&210&224&140&48&7&0\\
	\end{array}
	\]
	\end{table}
While the bounds in the previous table are realized by any stacked $3$-sphere on $12$ vertices, we do not know if the ones for the balanced case, shown in the upper table, are attained. In the next section we will see that they are not attained by the balanced analog of stacked spheres.
\end{example}
\begin{remark}
In view of \Cref{thm:pseudomanifold} a natural question that arises is if one can also bound the entries of the $j$\textsuperscript{th} row of the Betti table of a balanced pseudomanifold for $j\geq 2$. In order for our approach to work, this would require the multiplication maps from \Cref{fogel} to be injective also for higher degrees; a property that is closely related to Lefschetz properties. 
\end{remark}

\section{Betti numbers of stacked cross-polytopal spheres}\label{sectionstacked}
The aim of this section is to compute the graded Betti numbers of stacked cross-polytopal spheres explicitly. Stacked cross-polytopal spheres can be considered as the balanced analog of stacked spheres, in the sense that both minimize the $h$-vector among the class of balanced normal pseudomanifolds respectively all normal pseudomanifolds (see \cite[Theorem 4.1]{KN} and e.g., \cite{Fogelsanger,Kalai,tay}). For stacked spheres, explicit formulas for their graded Betti numbers were provided by Hibi and Terai \cite{TH} and it was shown that they only depend on the number of vertices and the dimension but not on the combinatorial type of the stacked sphere (see also \Cref{standardstacked}). 

We start by introducing some necessary definitions. We denote the boundary complex of the $d$-dimensional cross-polytope by $\C_d$. Combinatorially, $\C_d$ is given as the join of $d$ pairs of disconnected vertices, i.e.,
\begin{equation*}
\label{eq:cross-polytope}
\C_d:=\{v_1,w_1\}\ast\cdots \ast \{v_d,w_d\}.
\end{equation*}

\begin{definition}
	Let $n=kd$ for some integer $k\geq2$. A \emph{stacked cross-polytopal $(d-1)$-sphere} on $n$ vertices is a simplicial complex obtained via the connected sum of $k-1$ copies of $\mathcal{C}_d$. 
	We denote by $\mathcal{ST}^{\times}(n,d)$ the set of all stacked cross-polytopal $(d-1)$-spheres on $n$ vertices.
\end{definition}
Observe that $\ST^{\times}(2d,d)=\{\C_d\}$, and, as $\C_d$ is balanced, so is any stacked cross-polytopal sphere. In analogy with the non-balanced setting, for $k\geq 4$, there exist stacked cross-polytopal spheres in $\mathcal{ST}^{\times}(kd,d)$ of different combinatorial types, as depicted in \Cref{three_stacked}. Nevertheless, it is easily seen that the $f$-vector of a stacked cross-polytopal only depends on $n$ and $d$. In this section, we will show the same behavior for their graded Betti numbers.
\begin{figure}[H]
	\centering
	\includegraphics[scale=0.5]{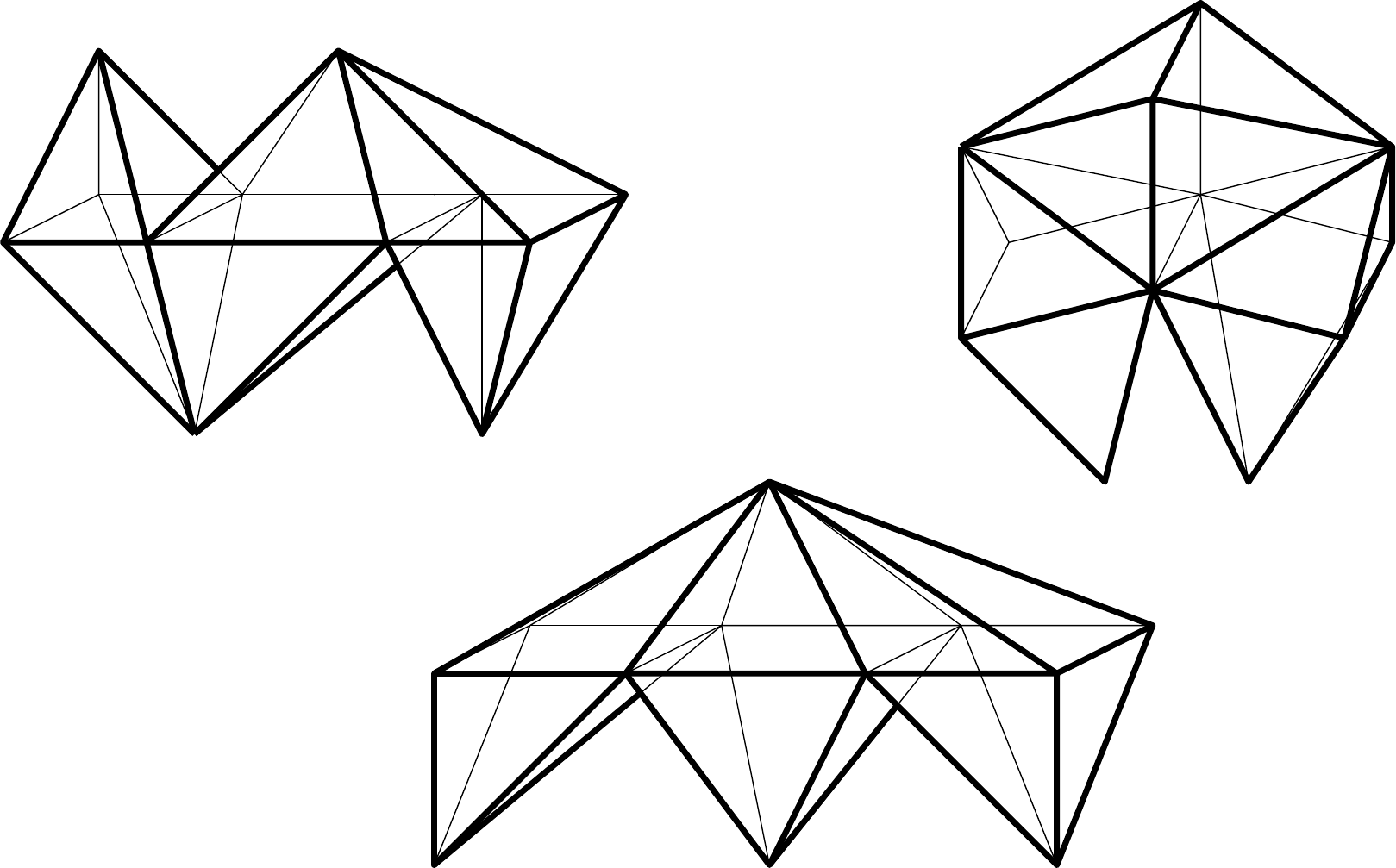}
	\caption{Three non simplicially isomorphic spheres in $\mathcal{ST}^{\times}(12,3)$.}
	\label{three_stacked}
\end{figure} 

As a warm-up, we compute the Betti numbers of the boundary complex of the cross-polytope.
\begin{lemma}\label{lemma:cross}
Let $d\geq 1$. Then $\beta_{i,i+j}(\FF[\C_d])=0$ for all $i\geq 0$ and $j\neq i$. Moreover,
\begin{equation*}
\beta_{i,2i}(\FF[\C_d])=\binom{d}{i}
\end{equation*}
for all $i$. 
\end{lemma}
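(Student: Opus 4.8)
The plan is to exploit the fact that $\C_d$ is a join of $d$ copies of the $0$-sphere $S^0 = \{v_i, w_i\}$, which makes its Stanley-Reisner ring a tensor product of polynomial rings modulo a single quadric. Indeed, $I_{\C_d}$ is generated by the monomials $x_{v_i}x_{w_i}$ for $1 \le i \le d$, since a subset $F$ fails to be a face exactly when it contains some antipodal pair $\{v_i, w_i\}$. Hence $\FF[\C_d] \cong \bigotimes_{i=1}^d \FF[x_{v_i}, x_{w_i}]/(x_{v_i}x_{w_i})$, a tensor product over $\FF$ of $d$ copies of the ring $A := \FF[x,y]/(xy)$.

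The key computational input is the minimal free resolution of $A$ over $\FF[x,y]$: it is the length-one complex $0 \to \FF[x,y](-2) \xrightarrow{\cdot xy} \FF[x,y] \to A \to 0$, so $\beta_{0,0}(A) = 1$, $\beta_{1,2}(A) = 1$, and all other $\beta_{i,j}(A) = 0$. Now I would invoke the behavior of graded Betti numbers under tensor products of algebras over disjoint sets of variables: if $R = R' \otimes_\FF R''$ where $R'$ and $R''$ are quotients of polynomial rings in disjoint variables, then the tensor product of the respective minimal free resolutions is again minimal (the differentials have entries in the respective maximal ideals, so no unit entries appear in the total complex), and therefore $\Tor^S_i(R,\FF)_j = \bigoplus_{i'+i''=i,\, j'+j''=j} \Tor_{i'}(R',\FF)_{j'} \otimes \Tor_{i''}(R'',\FF)_{j''}$, i.e. the Betti numbers multiply via the graded Künneth formula. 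Iterating over the $d$ tensor factors gives that the only nonzero Betti numbers of $\FF[\C_d]$ occur in homological degree $i$ and internal degree $2i$ (each factor contributes either $(0,0)$ or $(1,2)$, so choosing $i$ of the $d$ factors to contribute $(1,2)$ forces internal degree exactly $2i$), and their count is $\binom{d}{i}$.

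Concretely, the steps are: (1) identify $G(I_{\C_d}) = \{x_{v_i}x_{w_i} : 1 \le i \le d\}$ from the join description $\C_d = \{v_1,w_1\} * \cdots * \{v_d, w_d\}$; (2) write down the Koszul-type resolution of $A = \FF[x,y]/(xy)$ and read off its Betti table; (3) state the tensor-product/Künneth principle for Betti numbers of quotient rings in disjoint variables and check minimality of the tensored resolution; (4) conclude $\beta_{i,i+j}(\FF[\C_d]) = \sum_{\text{subsets } T \subseteq [d], |T|=i} \prod_{t \in T} \beta_{1,2}(A) = \binom{d}{i}$ when $j = i$ and $0$ otherwise. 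As an alternative to step (3), one could run Hochster's formula directly: an induced subcomplex $(\C_d)_W$ is a join of copies of $S^0$ (over indices $t$ with $\{v_t, w_t\} \subseteq W$) and points/empty sets, and $\widetilde H_{j-1}((\C_d)_W;\FF)$ is nonzero only when $W$ is a union of exactly $j$ antipodal pairs, in which case it is $1$-dimensional by the Künneth formula for joins; summing over the $\binom{d}{j}$ such $W$ (each of size $2j$) recovers the formula.

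The main obstacle is really just making the minimality of the tensor product of resolutions precise (equivalently, justifying that Betti numbers multiply under $\otimes_\FF$ for standard graded algebras), but this is standard — it follows because the tensor product complex of two minimal complexes has differential with all entries in $\m$. If one prefers to avoid even citing this, the Hochster's formula route is fully self-contained given \Cref{Hochster} and the Künneth formula for joins already used in the proof of \Cref{homology_dpartite}, and I would likely present that version for uniformity with the rest of the paper.
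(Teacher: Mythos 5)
Your proof is correct. The paper's own proof is a one-liner: since $I_{\C_d}=(x_{v_1}x_{w_1},\ldots,x_{v_d}x_{w_d})$ is generated by $d$ pairwise coprime monomials, it is a complete intersection, hence minimally resolved by the Koszul complex, which immediately gives $\beta_{i,2i}=\binom{d}{i}$ and vanishing elsewhere. Your tensor-product argument is in substance the same computation, just assembled differently: the Koszul complex on $x_{v_1}x_{w_1},\ldots,x_{v_d}x_{w_d}$ over $S=\FF[x_{v_1},x_{w_1},\ldots,x_{v_d},x_{w_d}]$ \emph{is} the tensor product over $\FF$ of the $d$ length-one resolutions of $\FF[x,y]/(xy)$ over $\FF[x,y]$, so your Künneth step for Betti numbers is just verifying by hand that this tensor product is exact and minimal rather than citing the regular-sequence/Koszul fact directly. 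The paper's phrasing is more economical but relies on the reader knowing the complete-intersection criterion; yours is more self-contained and also generalizes painlessly to any join decomposition (which is presumably why the paper proves the join-homology lemma \Cref{homology_dpartite} in the same spirit earlier on). Your Hochster-formula alternative is a genuinely independent combinatorial route and, as you note, would harmonize better with the surrounding arguments in \Cref{balsimp}; it requires no commutative-algebra input beyond \Cref{Hochster} and the K\"unneth formula for joins already invoked there. Either version you describe is acceptable.
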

\begin{proof}
Being generated by $d$ pairwise coprime monomials, the Stanley-Reisner ideal of $\C_d$ is a complete intersection, and hence it is minimally resolved by the Koszul complex. 
\end{proof}
The following immediate lemma will be very useful, in order to derive a recursive formula for the graded Betti numbers of stacked cross-polytopal spheres.

\begin{lemma}\label{lemma:homology}
Let $d\geq 3$. Let $\Delta\in \ST^\times(kd,d)$ be a stacked cross-polytopal sphere on vertex set $V$ and let $F$ be a facet of $\Delta$. Then for any $W\subseteq V$, 
\begin{equation*}
\widetilde{H}_j(\Delta_W;\FF)=\widetilde{H}_j((\Delta\setminus\{F\})_W;\FF) \quad \mbox{for all } 0\leq j\leq d-3.
\end{equation*}
\end{lemma}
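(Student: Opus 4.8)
The plan is to analyze how removing a single facet $F$ from a stacked cross-polytopal sphere $\Delta$ affects the induced subcomplexes and their low-dimensional homology. First I would recall that $\Delta$ is a $(d-1)$-dimensional sphere, so by Alexander duality (or just the standard long exact sequence for the pair $(\Delta, \Delta\setminus\{F\})$) the two complexes $\Delta$ and $\Delta\setminus\{F\}$ differ only in top-dimensional homology: removing an open $(d-1)$-cell from a triangulated sphere yields a triangulated ball, so $\widetilde{H}_j(\Delta;\FF)=\widetilde{H}_j(\Delta\setminus\{F\};\FF)$ for $j\le d-2$. The key point, however, is that we need this not just for $\Delta$ itself but for every induced subcomplex $\Delta_W$, and the difference set $\Delta\setminus(\Delta\setminus\{F\})=\{F\}$ consists of a single top face.

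The central observation is that $\Delta_W$ and $(\Delta\setminus\{F\})_W$ either coincide (when $F\not\subseteq W$) or differ by exactly the single face $F$ (when $F\subseteq W$). In the first case there is nothing to prove. In the second case, consider the (relative) chain complex: $C_\bullet(\Delta_W)$ and $C_\bullet((\Delta\setminus\{F\})_W)$ agree in all degrees except degree $d-1$, where the former has one extra basis element, namely $F$. Thus the quotient chain complex is concentrated in degree $d-1$, so the long exact sequence in homology gives $\widetilde{H}_j(\Delta_W;\FF)\cong\widetilde{H}_j((\Delta\setminus\{F\})_W;\FF)$ for all $j\le d-3$ (and also an injection for $j=d-2$, but we only need $j\le d-3$). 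This is really a purely chain-level statement and does not even require $\Delta$ to be a sphere — it only uses that $F$ has dimension $d-1$ and that $\Delta\setminus\{F\}$ is still a simplicial complex, i.e.\ that $F$ is a facet. So the proof reduces to this elementary homological algebra observation, carried out after the case distinction on whether $F\subseteq W$.

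Concretely, I would write: fix $W\subseteq V$. If $F\not\subseteq W$ then $\Delta_W=(\Delta\setminus\{F\})_W$ and the claim is trivial. If $F\subseteq W$, then $\Delta_W$ has the single extra face $F$ compared to $(\Delta\setminus\{F\})_W$, so we have a short exact sequence of chain complexes
\[
0\longrightarrow C_\bullet((\Delta\setminus\{F\})_W)\longrightarrow C_\bullet(\Delta_W)\longrightarrow Q_\bullet\longrightarrow 0,
\]
where $Q_\bullet$ is the complex with $\FF$ in homological degree $d-1$ and $0$ elsewhere. The associated long exact sequence in reduced homology then yields isomorphisms $\widetilde{H}_j((\Delta\setminus\{F\})_W;\FF)\xrightarrow{\ \sim\ }\widetilde{H}_j(\Delta_W;\FF)$ for every $j$ with $j\neq d-2$ and $j\neq d-1$, in particular for all $0\le j\le d-3$, which is exactly the assertion.

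I do not expect any real obstacle here; the statement is labelled ``immediate'' in the paper and the only thing to be a little careful about is bookkeeping of which degrees the isomorphism holds in (one should double-check that the boundary map $Q_{d-1}\to \widetilde{H}_{d-2}$ does not interfere with degrees $\le d-3$, which it clearly does not since $Q_j=0$ for $j\neq d-1$). An alternative, slightly slicker phrasing avoids the short exact sequence entirely: since $C_j(\Delta_W)=C_j((\Delta\setminus\{F\})_W)$ for all $j\le d-2$ and the boundary maps agree in this range, the subcomplexes of cycles and boundaries in degrees $\le d-3$ are literally identical, hence so is the homology. I would likely present this second argument as it is the cleanest, and only invoke the sphere/ball structure if one wanted the sharper statement including $j=d-2$, which is not needed for the recursion that follows.
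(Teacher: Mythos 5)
Your argument is correct, and your preferred ``slicker phrasing'' at the end --- that $C_j(\Delta_W)$ and $C_j((\Delta\setminus\{F\})_W)$ agree for $j\le d-2$ with identical boundary maps, hence identical homology in degrees $\le d-3$ --- is exactly the paper's one-line justification that the two complexes share the same skeleta up to dimension $d-2$. The long exact sequence version you describe first is an equivalent but slightly more elaborate way of packaging the same observation.
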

\begin{proof}
The statement is immediate since $\Delta$ and $\Delta\setminus \{F\}$ share the same skeleta up to dimension $d-2$. 
\end{proof}

Consider $\Delta\in \ST^\times(kd,d)$ and let $\Diamond_1,\ldots,\Diamond_{k-1}$ denote the copies of $\C_d$ from which $\Delta$ was constructed. We call a facet $F\in\Delta\cap\Diamond_i$ \emph{extremal} if $V(\Diamond_i)\setminus F\notin \Delta$, and the facet $V(\Diamond_i)\setminus F$ is called the \emph{opposite} of $F$. Intuitively a facet $F$ of $\Delta$ is extremal if removing all the vertices in $F$ from $\Delta$ yields a complex $\Gamma\setminus \{G\}$, where $\Gamma\in\ST^\times((k-1)d,d)$ and $G$ is the opposite of $F$.
\begin{figure}[H]
	\centering
	\includegraphics[scale=0.7]{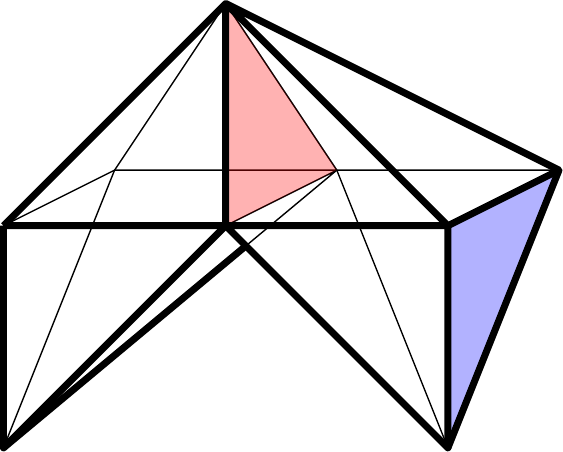}
	\caption{An {\color{blue}{extremal facet}} and its {\color{red}{opposite}}.}
	\label{extremal_opposite}
\end{figure} 

We have the following recursive formulas for Betti numbers of stacked cross-polytopal spheres.
\begin{remark}
	Note that for the case $j=1$ the following formula can be deduced from Corollary 3.4 in \cite{ChKi}. We report its proof anyway, as the idea is analogous to the case $j\geq 2$.  
\end{remark}
\begin{theorem}\label{thm:rec}
Let $n\geq 3d$ and $\Delta\in \ST^\times (n,d)$. Then
\begin{equation}
\beta_{i,i+j}(\FF[\Delta])=\begin{cases}
\displaystyle\sum_{\ell=0}^d\binom{d}{\ell}\beta_{i-\ell,i-\ell+1}(\FF[\Gamma])+d\binom{n-2d}{i-1}+\sum_{\ell=1}^{\min\{i,d\}}\binom{d}{\ell}\binom{n-2d}{i+1-\ell}, &\mbox{ if } j=1\\
\displaystyle\sum_{\ell=0}^{d}\binom{d}{\ell}\beta_{i-\ell,i-\ell+j}(\FF[\Gamma])+\binom{d}{j}\binom{n-2d}{i-j}, &\mbox{ if } 2\leq j\leq d-2,
\end{cases}
\end{equation}
with $\Gamma\in\ST^\times (n-d,d)$. In particular, the graded Betti numbers of $\Delta$ only depend on $n$ and $d$. 
\end{theorem}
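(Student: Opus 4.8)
The plan is to prove the recursive formulas by peeling off one copy of $\C_d$ from $\Delta$ and analyzing how the simplicial homology of induced subcomplexes changes. Fix $\Delta\in\ST^\times(n,d)$ with $n\geq 3d$, choose an extremal facet $F$ with opposite facet $G$, and write $\Diamond$ for the copy of $\C_d$ containing $F$, so that $V(\Diamond)=F\sqcup G$ with $|F|=|G|=d$. Removing all $2d$ vertices of $\Diamond$ and the facet $G$ yields a complex of the form $\Gamma\setminus\{G\}$ with $\Gamma\in\ST^\times(n-d,d)$ built on the remaining $n-d$ vertices; here $G$ is identified with a facet of $\Gamma$. The key combinatorial observation is that every vertex of $\Delta$ either lies in $V(\Diamond)$ or in $V(\Gamma)$, and these overlap exactly in the $d$ vertices of $G$.

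First I would reduce to counting homology of induced subcomplexes via Hochster's formula (\Cref{Hochster}): $\beta_{i,i+j}(\FF[\Delta])=\sum_{W\subseteq V,\,|W|=i+j}\dim_\FF\widetilde H_{j-1}(\Delta_W;\FF)$, and similarly for $\Gamma$. For each $W$, I would split according to $W\cap F$ (the $\ell$ vertices chosen from the "free" half of the peeled cross-polytope, where $\ell=|W\cap F|$) and the part $W'=W\setminus F\subseteq V(\Gamma)$. The crucial structural fact is that $\Delta_W$ decomposes as a join: since $\Diamond=\{v_1,w_1\}\ast\cdots\ast\{v_d,w_d\}$ and the $w_i\in F$ are not linked to anything outside $\Diamond$ in $\Delta$ except through $G$, one gets $\Delta_W \simeq (\text{join of the }\ell\text{ isolated points }W\cap F)\ast(\text{something built from }W')$ whenever $W\cap F\neq\emptyset$ — more precisely $\Delta_W$ is the join of $\ell$ copies of $S^0$-pieces (each contributing a suspension) with an induced subcomplex of $\Gamma\setminus\{G\}$ on $W'$. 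Using \Cref{lemma:homology} to replace $\Gamma\setminus\{G\}$ by $\Gamma$ in the relevant range $j\leq d-2$, the Künneth formula for joins then gives $\widetilde H_{j-1}(\Delta_W;\FF)\cong\widetilde H_{j-1-\ell}((\Gamma)_{W'};\FF)$ shifted appropriately, which after summing over all $\binom{d}{\ell}$ choices of $W\cap F$ and all $W'$ produces the term $\sum_\ell\binom{d}{\ell}\beta_{i-\ell,i-\ell+j}(\FF[\Gamma])$.

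The remaining contributions come from subsets $W$ with $W\cap F=\emptyset$, i.e. $W\subseteq V(\Gamma)$, but where $\Delta_W$ and $\Gamma_W$ differ — this happens precisely when $W\supseteq G$ and $W$ avoids the vertices $\{v_1,\dots,v_d\}$ lying on the $\Gamma$-side of $\Diamond$... actually the honest bookkeeping is that $W\subseteq V(\Delta)\setminus F$ can still meet the "$v$-half", and $\Delta_W$ versus $(\Gamma\setminus\{G\})_W$ differ only by whether the simplex on $G$ (and faces completing it) is present. This forces $W$ to contain $G$, and one counts that such $W$ have $|W\setminus G|=i+j-d$ vertices chosen from the $n-2d$ "interior" vertices of $\Gamma$, each such $W$ contributing a single class in dimension $d-1$ (coming from the boundary of the $d$-simplex on $G$ that is filled in $\Gamma$ but not in $\Delta$ when $j-1<d-1$, or vice versa). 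For $2\leq j\leq d-2$ this yields exactly the correction $\binom{d}{j}\binom{n-2d}{i-j}$ — the $\binom{d}{j}$ accounting for which $j$-subset of the $G$-vertices survives in a cycle and the $\binom{n-2d}{i-j}$ for the interior vertices of $W$. The $j=1$ case is more delicate because $0$-dimensional (connectivity) homology behaves differently: filling in the facet $G$ in $\Gamma$ can merge components that were separate in $\Delta$, and the peeled cross-polytope itself, restricted to its free half, splits into up to $d$ pieces; careful component-counting produces the extra $d\binom{n-2d}{i-1}$ and $\sum_{\ell=1}^{\min\{i,d\}}\binom{d}{\ell}\binom{n-2d}{i+1-\ell}$ terms. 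I expect the main obstacle to be precisely this $j=1$ connectivity bookkeeping — getting the component counts exactly right when several copies of $\C_d$ are glued along facets — whereas the $j\geq 2$ case follows cleanly from Künneth for joins plus \Cref{lemma:homology}. Finally, since the recursion expresses $\beta_{i,i+j}(\FF[\Delta])$ purely in terms of $n$, $d$, and the Betti numbers of some $\Gamma\in\ST^\times(n-d,d)$, and since the base case $\ST^\times(2d,d)=\{\C_d\}$ has Betti numbers depending only on $d$ by \Cref{lemma:cross}, a straightforward induction on $k=n/d$ shows the graded Betti numbers of any $\Delta\in\ST^\times(n,d)$ depend only on $n$ and $d$.
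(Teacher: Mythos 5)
Your proposal starts in the right place (Hochster's formula, peeling off one copy of $\C_d$ at an extremal facet), but the central structural claim is false, and the bookkeeping built on it does not give the stated formulas.

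The claim that $\Delta_W$ decomposes as a join $(\text{stuff on } W\cap F)\ast(\text{stuff on } W')$, or as an $\ell$-fold suspension, cannot hold. In $\Delta$, a vertex $w\in F$ has its link entirely inside $\Diamond$; it is \emph{not} adjacent to any vertex of $V(\Gamma)\setminus G$. So as soon as $W$ contains a vertex of $F$ and also a vertex of $V(\Gamma)\setminus V(\Diamond)$, those two vertices span no edge in $\Delta_W$, and $\Delta_W$ is not a join of $\Delta_{W\cap F}$ with anything (a join would require every such pair to be an edge). Indeed, since $\Delta_{W\cap F}$ is a nonempty simplex whenever $W\cap F\neq\emptyset$, the claimed join would force $\Delta_W$ to be contractible, which is visibly wrong. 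Even if one presses on with the suspension picture, K\"unneth for joins would produce $\widetilde H_{j-1}(\Delta_W)\cong\widetilde H_{j-1-\ell}(\Gamma_{W'})$, which under Hochster's formula sums to $\sum_\ell\binom{d}{\ell}\beta_{i,\,i+j-\ell}(\FF[\Gamma])$ rather than the target $\sum_\ell\binom{d}{\ell}\beta_{i-\ell,\,i-\ell+j}(\FF[\Gamma])$: the homological index shifts the wrong way. The correct decomposition is a \emph{gluing}, not a join: $\Delta_W=(\Gamma\setminus\{G\})_W\cup(\Diamond\setminus\{G\})_W$ with intersection $(\partial G)_W$, and the right tool is Mayer--Vietoris, which for $1\le j-1\le d-3$ gives $\widetilde H_{j-1}(\Delta_W)\cong\widetilde H_{j-1}(\Gamma_W)\oplus\widetilde H_{j-1}(\Diamond_W)$ since $(\partial G)_W$ has vanishing homology in the relevant range.

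Your identification of the source of the correction term $\binom{d}{j}\binom{n-2d}{i-j}$ is also off. You trace it to subsets $W\subseteq V(\Gamma)$ with $G\subseteq W$ for which ``$\Delta_W$ and $\Gamma_W$ differ.'' But \Cref{lemma:homology} says that for any $W\subseteq V(\Gamma)$ and any $j\le d-2$, $\widetilde H_{j-1}(\Delta_W)=\widetilde H_{j-1}((\Gamma\setminus\{G\})_W)=\widetilde H_{j-1}(\Gamma_W)$: there is no discrepancy there in this range. The correction term actually comes from the $\Diamond$-side of the Mayer--Vietoris splitting: by \Cref{lemma:cross} the only nonzero Betti numbers of $\FF[\Diamond]$ are $\beta_{j,2j}=\binom{d}{j}$, and extending a ``diametrical'' $2j$-subset of $V(\Diamond)$ by $i-j$ vertices from the $n-2d$ vertices of $V(\Gamma)\setminus G$ gives $\binom{d}{j}\binom{n-2d}{i-j}$. (A minor but telling slip in your setup: removing ``all $2d$ vertices of $\Diamond$'' would leave $n-2d$ vertices, not $n-d$; you mean removing the $d$ vertices of $F$.) So the missing idea is the Mayer--Vietoris argument replacing the K\"unneth/join argument; the rest of your outline (splitting Hochster's sum by $|W\cap F|$, separate handling of the $j=1$ component counting, induction to the base case $\C_d$) aligns with the paper once the decomposition is corrected.
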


\begin{proof}
We will compute the graded Betti numbers using Hochster's formula. 
Let $V$ be the vertex set of $\Delta$ and let $F$ be an extremal facet of $\Delta$ with opposite $G$. Then, we can write $\Delta=(\Gamma\setminus \{G\})\cup (\Diamond\setminus \{G\})$, where $\Gamma\in \ST^{\times}(n-d,d)$ and $\Diamond$ is the boundary complex of the $d$-dimensional cross-polytope on vertex set $F\cup G$. In particular, $(\Gamma\setminus \{G\})\cap (\Diamond\setminus \{G\})=\partial(G)$. We now distinguish two cases.\\
{\sf Case 1:} $j=1$. Let $W\subseteq V$. We have several cases:
\begin{itemize}
\item[(a)] If $W\subseteq V(\Gamma)$, then $\Delta_W=(\Gamma\setminus \{G\})_W$. By \Cref{lemma:homology}, $(\Gamma\setminus \{G\})_W$ (thus $\Delta_W$) and $\Gamma_W$ have the same number of connected components and hence $\widetilde{H}_0(\Delta_W;\FF)=\widetilde{H}_0(\Gamma_W;\FF)$. 
\item[(b)] If $W\subseteq V(\Diamond)$, then it follows as in (b) that $\widetilde{H}_0(\Delta_W;\FF)=\widetilde{H}_0(\Diamond_W;\FF)$.
\item[(c)] Assume that $W\cap (V(\Gamma)\setminus G)\neq \emptyset$ and $W\cap (V(\Diamond)\setminus G)\neq \emptyset$. Then, $\Delta_W=(\Gamma\setminus\{G\})_W\cup(\Diamond\setminus \{G\})_W$. If, in addition, $W\cap G=\emptyset$, then this union is disjoint and, using \Cref{lemma:homology} we conclude that the number of connected components of $\Delta_W$ equals the sum of the number of connected components of $\Gamma_W$ and $\Diamond_W$. Thus, as neither $\Gamma_W$ nor $\Diamond_W$ is the empty complex, 
$$
\dim_\FF \widetilde{H}_0(\Delta_W;\FF)=\dim_\FF\widetilde{H}_0(\Gamma_W;\FF)+\dim_\FF\widetilde{H}_0(\Diamond_W;\FF)+1.
$$
If $W\cap G\neq \emptyset$, then the number of connected components of $\Delta_W$ is one less than the sum of the number of connected components of $(\Gamma\setminus \{G\})_W$ and $(\Diamond\setminus\{G\})_W$. In particular, using \Cref{lemma:homology}, we infer
$$
\dim_\FF \widetilde{H}_0(\Delta_W;\FF)=\dim_\FF\widetilde{H}_0(\Gamma_W;\FF)+\dim_\FF\widetilde{H}_0(\Diamond_W;\FF).
$$
\end{itemize}
Using Hochster's formula we obtain:
\begin{align*}
\beta_{i,i+1}(\FF[\Delta])=&\sum_{W\subseteq V;\;|W|=i+1}\dim_\FF\widetilde{H}_{i-1}(\Delta_W;\FF)\\
=&\sum_{\substack{W\subseteq V;\;|W|=i+1\\W\cap G\neq\emptyset}}\left(\dim_\FF\widetilde{H}_0(\Gamma_W;\FF)+\dim_\FF\widetilde{H}_0(\Diamond_W;\FF)\right)\\
&+\sum_{\substack{W\subseteq V\setminus G;\;|W|=i+1\\W\cap V(\Gamma)\neq \emptyset; W\cap V(\Diamond)\neq \emptyset}}\left(\dim_\FF\widetilde{H}_0(\Gamma_W;\FF)+\dim_\FF\widetilde{H}_0(\Diamond_W;\FF)+1\right)\\
&+\sum_{\substack{W\subseteq V(\Gamma)\setminus G\\|W|=i+1}}\dim_\FF\widetilde{H}_0(\Gamma_W;\FF)+\sum_{\substack {W\subseteq V(\Diamond)\setminus G\\|W|=i+1}}\dim_\FF\widetilde{H}_0(\Diamond_W;\FF).
\end{align*}
For $W\subseteq V(\Gamma)$ (respectively $W\subseteq V(\Diamond)$) the term $\dim_\FF\widetilde{H}_0(\Gamma_W;\FF)$ (respectively $\dim_\FF\widetilde{H}_0(\Diamond_W;\FF)$) appears $\binom{d}{i+1-|W|}$ (respectively $\binom{n-2d}{i+1-|W|}$) times in the previous expression. Moreover, there are $\sum_{\ell=1}^{\min\{i,d\}}\binom{d}{\ell}\binom{n-2d}{i+1-\ell}$ $(i+1)$-subsets $W$ of $V\setminus G$ with $W\cap V(\Gamma)\neq \emptyset$ and  $W\cap V(\Diamond)\neq \emptyset$.  This implies
\begin{align*}
\beta_{i,i+1}(\FF[\Delta])=&\sum_{\ell=1}^{i+1}\binom{d}{i+1-\ell}\left(\sum_{W\subseteq V(\Gamma),\;|W|=\ell}\dim_\FF\widetilde{H}_0(\Gamma_W;\FF)\right)\\
&+\sum_{\ell=1}^{2d}\binom{n-2d}{i+1-\ell}\left(\sum_{W\subseteq V(\Diamond),\;|W|=\ell}\dim_\FF\widetilde{H}_0(\Diamond;\FF)\right)
+\sum_{\ell=1}^{\min\{i,d\}}\binom{d}{\ell}\binom{n-2d}{i+1-\ell}\\
=&\sum_{\ell=i+1-d}^{i+1}\binom{d}{i+1-\ell}\beta_{\ell-1,\ell}(\FF[\Gamma])+\sum_{\ell=1}^{2d}\binom{n-2d}{i+1-\ell}\beta_{\ell-1,\ell}(\FF[\Diamond])+\sum_{\ell=1}^{\min\{i,d\}}\binom{d}{\ell}\binom{n-2d}{i+1-\ell},
\end{align*}
where the last equality holds by Hochster's formula. The desired recursion for $\beta_{i,i+1}(\FF[\Delta])$ now follows from a simple index shift. \\

{\sf Case 2:} $2\leq j\leq d-2$. Let $W\subseteq V$. We consider two cases.
\begin{itemize}
\item[(a)] If $W\subseteq V(\Gamma)$, then it follows from \Cref{lemma:homology} that 
$$
\widetilde{H}_j(\Delta_W;\FF)=\widetilde{H}_j(\Gamma_W;\FF) \mbox{ for } 0\leq j\leq d-3.
$$
\item[(b)] If $W\subseteq V(\Diamond)$, then it follows as in (a) that
$$
\widetilde{H}_j(\Delta_W;\FF)=\widetilde{H}_j(\Diamond_W;\FF) \mbox{ for } 0\leq j\leq d-3. 
$$
\item[(c)] Assume that $W\cap(V(\Gamma)\setminus G)\neq \emptyset$ and $W\cap (V(\Diamond)\setminus G)\neq \emptyset$. Then, $\Delta_W=(\Gamma\setminus\{G\})_W\cup(\Diamond\setminus \{G\})_W$. Let $1\leq j\leq d-3$. We have the following Mayer-Vietoris exact sequence 
\begin{align}\label{eq:MV}
 \ldots \to \underbrace{\widetilde{H}_{j}(\partial(G)_W;\FF)}_{=0}&\to \widetilde{H}_j((\Gamma\setminus \{G\})_W;\FF)\oplus \widetilde{H}_j((\Diamond\setminus \{G\})_W;\FF)\notag \\
 &\to \widetilde{H}_{j}(\Delta_W;\FF)\to \underbrace{\widetilde{H}_{j-1}(\partial(G)_W;\FF)}_{=0}\to \ldots,
\end{align}
where we use that $(\Gamma\setminus\{G\})_W\cap (\Diamond\setminus \{G\})_W=(\partial(G))_W$, which has always trivial homology in dimension $\leq d-3$.  
It follows from \Cref{eq:MV} combined with \Cref{lemma:homology} that 
$$
\widetilde{H}_{j}(\Delta_W;\FF)\cong\widetilde{H}_j(\Gamma_W;\FF)\oplus \widetilde{H}_j(\Diamond_W;\FF) \quad \mbox{ for } 1\leq j\leq d-3.
$$
Using Hochster's formula we conclude
\begin{align*}
\beta_{i,i+j}(\FF[\Delta])=&\sum_{W\subseteq V,\;|W|=i+1}\left(\dim_\FF\widetilde{H}_{j-1}(\Gamma_W;\FF)+ \dim_\FF\widetilde{H}_{j-1}(\Diamond_W;\FF)\right)\\
=&\sum_{\ell=i+j-d}^{i+j}\binom{d}{i+j-\ell}\left(\sum_{W\subseteq V(\Gamma),\;|W|=\ell}\dim_\FF\widetilde{H}_{j-1}(\Gamma_W;\FF)\right)+\\
&+\sum_{\ell=1}^{2d}\binom{n-2d}{i+j-\ell}\left(\sum_{W\subseteq V(\Diamond),\;|W|=\ell}\dim_\FF\widetilde{H}_{j-1}(\Diamond;\FF)\right)\\
=&\sum_{\ell=i+j-d}^{i+j}\binom{d}{i+j-\ell}\beta_{\ell-j,\ell}(\FF[\Gamma])+\sum_{\ell=1}^{2d}\binom{n-2d}{i+j-\ell}\beta_{\ell-j,\ell}(\FF[\Diamond])\\
=&\sum_{\ell=0}^{d}\binom{d}{\ell}\beta_{i-d+\ell,i-d+\ell+j}(\FF[\Gamma])+\binom{n-2d}{i-j}\binom{d}{j},
\end{align*}
where the second equality follows, as in Case 1, by a simple counting argument and the last equality follows from \Cref{lemma:cross}. 
\end{itemize}
The statement of the ``In particular''-part follows directly by applying the recursion iteratively, and from $\ST^\times(2d,d)=\{\C_d\}$. 
\end{proof}

\begin{remark}
We remark that due to graded Poincar\'e duality the graded Betti numbers of any stacked cross-polytopal sphere $\Delta\in \ST^\times(n,d)$ exhibit the following symmetry:
\begin{equation}\label{eq:PoincareDuality}
\beta_{i,i+j}(\FF[\Delta])=\beta_{n-d-i,n-i-j}(\FF[\Delta]).
\end{equation}
This in particular implies $\beta_{n-d,n}(\FF[\Delta])=1$ and $\beta_{i,i+d}(\FF[\Delta])=0$ for $0\leq i<n-d$. Moreover, also $\beta_{i,i+d-1}(\FF[\Delta])$ can be computed using the recursion from \Cref{thm:rec} (for the linear strand). 
\end{remark}
In order to derive explicit formulas for the graded Betti numbers of a stacked cross-polytopal sphere, we need to convert the recursive formula of \Cref{thm:rec} into a closed expression.

\begin{theorem}\label{thm:BettiCross}
Let $d\geq 3$, $k\geq 2$ and let $\Delta\in \ST^\times(kd,d)$ be a stacked cross-polytopal sphere. Then, $\beta_{0,0}(\FF[\Delta])=\beta_{(k-1)d,kd}(\FF[\Delta])=1$ and for $i\geq 0$:\\
$\beta_{i,i+j}(\FF[\Delta])=\begin{cases}
	\displaystyle(k-2)\binom{d(k-1)}{i+1}-(k-1)\binom{d(k-2)}{i+1}+d(k-1)\binom{d(k-2)}{i-1} & j=1\\
	\displaystyle(k-1)\binom{d}{j}\binom{d(k-2)}{i-j} & 2\leq j\leq d-2\\
	\displaystyle(k-2)\binom{d(k-1)}{i-1}-(k-1)\binom{d(k-2)}{i-d-1}+d(k-1)\binom{d(k-2)}{i-d+1} & j=d-1
\end{cases}$	
\end{theorem}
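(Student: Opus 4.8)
The strategy is to solve the recursion in \Cref{thm:rec} explicitly by induction on $k$. For fixed $d\geq 3$, set $n=kd$ and abbreviate $\beta_{i,i+j}(\FF[\Delta])$ by $b^{(k)}_{i,j}$ (well-defined since \Cref{thm:rec} already tells us these numbers depend only on $n$ and $d$). The base case is $k=2$, where $\ST^\times(2d,d)=\{\C_d\}$ and \Cref{lemma:cross} gives $b^{(2)}_{i,j}=\binom{d}{i}$ if $j=i$ and $0$ otherwise; one checks directly that the claimed closed formulas, when specialized to $k=2$, reduce to this (the $j=1$ and $j=d-1$ lines collapse to the appropriate entries, using $\binom{0}{i+1}=\delta_{i,-1}$ and $\binom{0}{i-1}=\delta_{i,1}$, and the middle line gives $\binom{d}{j}\binom{0}{i-j}=\binom{d}{j}\delta_{i,j}$). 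It is also worth checking the case $k=3$ by hand against the recursion, since there the ``inner'' sphere $\Gamma\in\ST^\times(2d,d)$ is again $\C_d$, so no induction hypothesis is needed and it validates the bookkeeping.

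For the inductive step I would treat the three ranges of $j$ separately. For $2\leq j\leq d-2$ the recursion reads $b^{(k)}_{i,j}=\sum_{\ell=0}^d\binom{d}{\ell}b^{(k-1)}_{i-\ell,j}+\binom{d}{j}\binom{(k-1)d-2d}{i-j}$; substituting the inductive formula $b^{(k-1)}_{i-\ell,j}=(k-2)\binom{d}{j}\binom{d(k-3)}{i-\ell-j}$ and using the Vandermonde–type identity $\sum_{\ell=0}^d\binom{d}{\ell}\binom{d(k-3)}{i-\ell-j}=\binom{d(k-2)}{i-j}$, the whole thing telescopes to $(k-2)\binom{d}{j}\binom{d(k-2)}{i-j}+\binom{d}{j}\binom{d(k-2)}{i-j}=(k-1)\binom{d}{j}\binom{d(k-2)}{i-j}$, as desired. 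The $j=1$ case is the same in spirit but more laborious: one plugs the (inductively known) linear–strand formula for $\Gamma\in\ST^\times((k-1)d,d)$ into $\sum_{\ell=0}^d\binom{d}{\ell}b^{(k-1)}_{i-\ell,1}+d\binom{n-2d}{i-1}+\sum_{\ell=1}^{\min\{i,d\}}\binom{d}{\ell}\binom{n-2d}{i+1-\ell}$, and repeatedly applies Vandermonde $\sum_\ell\binom{d}{\ell}\binom{m}{i-\ell}=\binom{m+d}{i}$ to each of the three resulting pieces (the $\binom{d(k-1)}{\cdot}$, $\binom{d(k-2)}{\cdot}$, and $d\binom{d(k-2)}{\cdot}$ terms), absorbing the two explicit correction sums; the minor subtlety is that $\sum_{\ell=1}^{\min\{i,d\}}\binom{d}{\ell}\binom{n-2d}{i+1-\ell}=\binom{n-d}{i+1}-\binom{n-2d}{i+1}$, since the $\ell=0$ term is $\binom{n-2d}{i+1}$ and terms with $\ell>d$ vanish. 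Finally, the $j=d-1$ line is \emph{not} obtained from the recursion directly (which only covers $2\leq j\leq d-2$ and $j=1$); instead I would invoke graded Poincaré duality $\beta_{i,i+j}(\FF[\Delta])=\beta_{n-d-i,\,n-i-j}(\FF[\Delta])$ for the Gorenstein ring $\FF[\Delta]$ (as recorded in the Remark following \Cref{thm:rec}), which sends the $j=d-1$ strand to the $j=1$ strand at homological degree $n-d-i$; substituting $i\mapsto d(k-1)-i$ and $j=1$ into the already-proved linear-strand formula and simplifying the binomial coefficients yields exactly the stated $j=d-1$ expression. One should separately note $\beta_{0,0}=\beta_{(k-1)d,kd}=1$, the first being trivial and the second following from duality (or from $\beta_{n-d,n}$ being the socle dimension of a Gorenstein Artinian reduction, which is $1$).

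The only genuinely routine-but-error-prone part is the bookkeeping in the $j=1$ step: keeping the binomial arguments aligned through the index shift $\ell\mapsto i-d+\ell$ built into \Cref{thm:rec}, and checking that the ranges of summation are wide enough that extending them to all of $\{0,\dots,d\}$ (to apply Vandermonde cleanly) introduces only vanishing terms. I expect no conceptual obstacle: everything reduces to the Vandermonde convolution and the duality symmetry, both of which are already available. A sanity check at the end is to specialize the final formulas to $(k,d)$ with $kd=12$ and compare with the experimental Macaulay2 tables in the preceding sections, and to verify internal consistency of the $j=1$ and $j=d-1$ formulas under the substitution $i\mapsto d(k-1)-i$.
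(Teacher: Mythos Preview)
Your proposal is correct and follows essentially the same route as the paper: induction on $k$ with base case $k=2$ via \Cref{lemma:cross}, the inductive step for $2\le j\le d-2$ and $j=1$ via the recursion of \Cref{thm:rec} together with Vandermonde's identity, and the $j=d-1$ strand via graded Poincar\'e duality. The only point where the paper is slightly more explicit is the case $i<d$ in the $j=1$ computation, where extending the summation range to apply Vandermonde does introduce a nonzero extra term $\binom{d}{i+1}$ (your displayed identity $\sum_{\ell=1}^{\min\{i,d\}}\binom{d}{\ell}\binom{n-2d}{i+1-\ell}=\binom{n-d}{i+1}-\binom{n-2d}{i+1}$ is off by $\binom{d}{i+1}$ when $i<d$); the paper handles this by a short case split and you have already flagged this as the place requiring care, so no genuine gap.
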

\begin{proof}
We proof the claim by induction on $k$. \\
For $k=2$, the first line above equals $d$ if $i=1$ and $0$ otherwise. Similarly, the second line equals $\binom{d}{i}$ if $j=i$ and $0$ otherwise. The claim for $k=2$ now follows from \Cref{lemma:cross}.\\
Let $k\geq 3$ and let $\Delta \in \ST^\times(kd,d)$. We first show the case $j=1$.\\
Using \Cref{thm:rec} and then the induction hypothesis, we conclude
	\begin{align}\label{eq:computations}
	\beta_{i,i+1}(\FF[\Delta])=&\sum_{\ell=0}^{\min\{i,d\}}\binom{d}{\ell}\beta_{i-\ell,i-\ell+1}(\FF[\Gamma])+d\binom{n-2d}{i-1}+\sum_{\ell=1}^{\min\{i,d\}}\binom{d}{\ell}\binom{n-2d}{i+1-\ell}\notag\\
	=& (k-3)\sum_{\ell=0}^{\min\{i,d\}}\binom{d}{\ell}\binom{d(k-2)}{(i+1)-\ell}-(k-2)\sum_{\ell=0}^{\min\{i,d\}}\binom{d}{\ell}\binom{d(k-3)}{(i+1)-\ell}\notag \\
	&+d(k-2)\sum_{\ell=0}^{\min\{i,d\}}\binom{d}{\ell}\binom{d(k-3)}{(i-1)-\ell}+d\binom{d(k-2)}{i-1}+\sum_{\ell=1}^{\min\{i,d\}}\binom{d}{\ell}\binom{d(k-2)}{(i+1)-\ell},
	\end{align}
	where $\Gamma\in\ST\times((k-1)d,d)$. 
	We now assume that $\min\{i,d\}=d$. We notice that in \eqref{eq:computations}, we can shift the upper summation indices to $i+1$ in the first $2$ sums and to $i-1$ in the third sum. Using Vandermonde identity we obtain
	\begin{align*} 
	\beta_{i,i+1}(\FF[\Delta])=&(k-3)\binom{d(k-1)}{i+1}-(k-2)\binom{d(k-2)}{i+1}+d(k-2)\binom{d(k-2)}{i-1}\\
	&+d\binom{d(k-2)}{i-1}+\left(\binom{d(k-1)}{i+1}-\binom{d(k-2)}{i+1}\right)\\
	=&(k-2)\binom{d(k-1)}{i+1}-(k-1)\binom{d(k-2)}{i+1}+d(k-1)\binom{d(k-2)}{i-1}.
	\end{align*}
If $i<d$ (thus $\min\{i,d\}=i$), then the same computation as above with an additional summand of $-(k-3)\binom{d}{i+1}$, $(k-2)\binom{d}{i+1}$ and $-\binom{d}{i+1}$ for the first, second and fourth sum, respectively, shows the formula for the first line.\\

We now show the case $1<j\leq d-2$:\\
Applying \Cref{thm:rec} and the induction hypothesis, we obtain
\begin{align*}
\beta_{i,i+j}(\FF[\Delta])=&\sum_{\ell=0}^{\min\{i,d\}}\binom{d}{\ell}\beta_{i-\ell,i-\ell+j}(\FF[\Gamma])+\binom{d}{j}\binom{d(k-2)}{i-j}\\
=&\sum_{\ell=0}^{\min\{i,d\}}\binom{d}{\ell}(k-2)\binom{d}{j}\binom{d(k-3)}{i-j-\ell}+\binom{d}{j}\binom{d(k-2)}{i-j}\\
=&(k-2)\binom{d}{j}\sum_{\ell=0}^{\min\{i-j,d\}}\binom{d}{\ell}\binom{d(k-3)}{i-j-\ell}+\binom{d}{j}\binom{d(k-2)}{i-j}\\
=&(k-2)\binom{d}{j}\binom{d(k-2)}{i-j}+\binom{d}{j}\binom{d(k-2)}{i-j}=(k-1)\binom{d}{j}\binom{d(k-2)}{i-j},
\end{align*}
where $\Gamma\in\ST^\times((k-1)d,d) $ and the fourth equality follows from Vandermonde's identity after observing that shifting the upper index of the sum to $i-j$ does not change the sum. \\

The statement in the last line ($j=d-1$) follows from graded Poincar\'e duality (see \eqref{eq:PoincareDuality}). 
\end{proof}

\begin{example}\label{example:tightness}
For stacked cross-polytopal $3$-spheres on $12$ vertices \Cref{thm:BettiCross} yields the following Betti numbers for the linear strand:
\begin{table}[H]
	\[\begin{array}{r|l|l|l|l|l|l|l|l|c}
	j\setminus i & 0 &1 & 2 & 3 & 4 & 5 & 6 & 7 & 8  \\ \hline
	1&0&24&80&116&88&36&8&1&0\\
	\end{array}.
	\]
\end{table}
If we compare them with the bounds for the Betti numbers of a $3$-dimensional balanced normal pseudomanifold on $12$ vertices from \Cref{thm:pseudomanifold}, displayed in the next table, we see that they are smaller in almost all places.
\begin{table}[H]
	\[\begin{array}{r|l|l|l|l|l|l|l|l|c}
	j\setminus i & 0 &1 & 2 & 3 & 4 & 5 & 6 & 7 & 8  \\ \hline
	1&0&24&89&155&154&90&29&4&0\\
	\end{array},
	\]
\end{table}
\end{example}

In light of \Cref{standardstacked} and the analogy between stacked and cross-polytopal stacked spheres, the previous example suggests the following  conjecture:

\begin{conjecture}
	Let $\Delta$ be a $(d-1)$-dimensional balanced normal pseudomanifold, with $d\geq 4$ and let $f_0(\Delta)=kd$, for some integer $k\geq 2$. Then
	$$\beta_{i,i+1}(\FF[\Delta])\leq \beta_{i,i+1}(\FF[\Gamma]),$$
	for $\Gamma\in\mathcal{ST}^{\times}(kd,d)$, and for every $i\geq 0$.
\end{conjecture}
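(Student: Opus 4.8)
The plan is to reduce the conjecture to a statement about the $1$-skeleton. By Hochster's formula (\Cref{Hochster}), for any simplicial complex $\Delta$ on vertex set $V$,
$$
\beta_{i,i+1}(\FF[\Delta])=\sum_{\substack{W\subseteq V\\ |W|=i+1}}\dim_\FF\widetilde{H}_0(\Delta_W;\FF)=\sum_{\substack{W\subseteq V\\ |W|=i+1}}\bigl(c(G[W])-1\bigr),
$$
where $G:=\Skel_1(\Delta)$ is the $1$-skeleton of $\Delta$ viewed as a graph and $c(G[W])$ is the number of connected components of the induced subgraph $G[W]$. Hence $\beta_{i,i+1}(\FF[\Delta])$ depends only on $G$, and it is a monotone non-increasing function of the edge set of $G$, since inserting an edge can only merge components. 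Consequently, it would be enough to exhibit inside $G$ a spanning subgraph $H$ with $\beta_{i,i+1}(\FF[\Delta(H)])\le\beta_{i,i+1}(\FF[\Gamma])$ for all $i$; the natural candidate is a graph isomorphic to the $1$-skeleton of a cross-polytopal stacked sphere on $kd$ vertices. Since, by \Cref{thm:BettiCross}, the right-hand side does not depend on the combinatorial type of $\Gamma$, the conjecture would follow from the purely graph-theoretic statement $(\star)$: \emph{the $1$-skeleton of every balanced normal $(d-1)$-pseudomanifold on $kd$ vertices contains, as a spanning subgraph, the $1$-skeleton of some member of $\ST^\times(kd,d)$.}

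To approach $(\star)$ I would induct on $k$, with an auxiliary induction on $d$. The relevant structural facts are: first, the balanced Lower Bound Theorem (\Cref{balancedlb}), which together with $h_2(\Delta)=\binom d2-(d-1)kd+f_1(\Delta)$ and $h_1(\Delta)=(k-1)d$ forces $f_1(\Delta)\ge f_1(\Gamma)$, with equality exactly when $\Delta$ attains equality in \Cref{balancedlb}; and second, the local structure of a normal pseudomanifold --- every ridge lies in exactly two facets, and links of faces of dimension $\le d-3$ are connected --- so that each vertex link is a balanced normal $(d-2)$-pseudomanifold and $G$ is highly connected (in fact generically $d$-rigid by Fogelsanger \cite{Fogelsanger}). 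The inductive step should mirror the connected-sum description of $\ST^\times(kd,d)$: one would locate $d$ vertices $U=\{u_1,\dots,u_d\}$ spanning no monochromatic edge, and a facet $F$ such that each $u_\ell$ is adjacent to all of $F\setminus\{u_\ell\}$ --- so that $U\cup F$ supports a copy of the cocktail-party graph $\Skel_1(\C_d)$ --- and such that, after deleting $U$, the remaining graph on $(k-1)d$ vertices still contains the $1$-skeleton of a balanced normal $(d-1)$-pseudomanifold on $(k-1)d$ vertices, to which the inductive hypothesis applies. The edge count above is what should guarantee that such a ``cross-polytopal block'' exists; since cross-polytopal spheres need not literally decompose as connected sums with the required facet in place, this argument is best run entirely at the level of graphs.

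The hard part is $(\star)$ itself. The edge bound alone does not suffice: a $d$-colourable graph on $kd$ vertices with precisely $f_1(\Gamma)$ edges need not contain a cross-polytopal $1$-skeleton, and in the same spirit the purely algebraic bounds obtained by passing to the lex ideal (\Cref{lemma:Bigatti}) or to the lex-plus-squares ideal (\Cref{lppBound}) both strictly overshoot $\beta_{i,i+1}(\FF[\Gamma])$ already for $\Delta=\C_d$ --- for instance they permit $\beta_{2,3}\ne 0$, whereas $\beta_{2,3}(\FF[\C_d])=0$ by \Cref{lemma:cross}. So the normal-pseudomanifold hypothesis has to enter in an essential way. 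A cleaner but more demanding alternative would be to first establish a rigidity statement --- that equality in \Cref{balancedlb} forces $\Delta\in\ST^\times(kd,d)$ --- and then to treat the non-extremal case by a ``de-stacking'' move (an edge contraction, or the reverse of a connected sum with $\C_d$) that strictly decreases $h_2(\Delta)-\tfrac{d-1}{2}h_1(\Delta)$, or the number of vertices, while not decreasing any $\beta_{i,i+1}$; iterating such a move and invoking \Cref{thm:BettiCross} would then close the argument. Constructing a move that simultaneously preserves balancedness and the normal-pseudomanifold property and keeps the linear Betti numbers from dropping is the balanced counterpart of the stress- and rigidity-theoretic arguments used in the ordinary setting, and is where the genuine difficulty lies --- which is why we record the statement only as a conjecture.
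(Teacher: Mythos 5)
This statement is recorded in the paper as an open conjecture; there is no proof in the paper to compare against, and you correctly recognize this and do not claim to close the gap. Your reduction is sound as far as it goes: by Hochster's formula, $\beta_{i,i+1}(\FF[\Delta])$ depends only on the $1$-skeleton $G$ and is non-increasing under edge insertion (each added edge can only merge components of some $G[W]$), so if $(\star)$ held --- i.e., $G$ always contains the $1$-skeleton of some $\Gamma\in\ST^\times(kd,d)$ as a spanning subgraph --- the conjecture would follow via \Cref{thm:BettiCross}. Your observation that the lex and lex-plus-squares bounds are already non-tight for $\C_d$ (they allow $\beta_{2,3}\neq 0$) is correct and usefully explains why the algebraic machinery of Sections 4 and 5 cannot settle this on its own.

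That said, $(\star)$ is doing all the work and you offer no proof of it, which you acknowledge; and it is genuinely unclear that $(\star)$ is the right intermediate statement, since it is considerably stronger than the conjecture. The conjecture compares two sequences of numbers, while $(\star)$ asks for a graph embedding, and a balanced normal pseudomanifold with irregular colour-class sizes or unusual local structure might fail to contain a cross-polytopal $1$-skeleton even while its Betti numbers stay below the target --- the monotone map from spanning subgraphs to $\beta_{i,i+1}$ is far from injective, so the numerical inequality could hold for non-structural reasons. The ``de-stacking'' alternative you sketch (an inverse connected-sum move that preserves balancedness and the normal-pseudomanifold property while not decreasing the linear Betti numbers) is closer in spirit to what is known in the non-balanced case, but constructing such a move is precisely the open difficulty, and you rightly leave it unresolved. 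In short: a reasonable strategic discussion of an open problem, logically consistent in its reductions, but not a proof --- consistent with the statement's status as a conjecture in the paper.
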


\section*{Acknowledgement}
We would like to thank Giulio Caviglia for directing us to the results on lex-plus-squares ideals by Mermin, Peeva and Stillman. This led to the content of \Cref{sect:SecondBound}. We are also grateful to Satoshi Murai for pointing out Corollary 1.4 in \cite{HerzogHibi:ComponentwiseLinear}, which simplified and shortened the proof of \Cref{prop:Gotzmann} . 
\bibliographystyle{alpha}
\bibliography{bibliography}
\section{Appendix}
\begin{proof}[Proof of \Cref{p and q}]
	Let $M$ be the $n\times n$ upper triangular matrix obtained by listing the degree $2$ monomials in variables $x_1,\ldots,x_n$ in decreasing lexicographic order from left to right and top to bottom:
	$$	M=\begin{bmatrix}
	x_{1}^2 & x_{1}x_{2}  & \dots  & x_{1}x_{n} \\
	0 & x_{2}^2  & \dots  & x_{2}x_{n} \\
	\vdots & \vdots  & \ddots & \vdots \\
	0 & 0 & \dots  & x_{n}^2
	\end{bmatrix}.$$
	From this ordering, it is easily seen, that, if $x_px_q$ (with $p<q$) is the $b$-th largest degree $2$ monomial in lexicographic order, then 
	$$
	n-p=\max\{s\in \NN~:~\sum_{\ell=1}^{s}\ell\leq\binom{n+1}{2}-b\}. $$
	As $s= -\frac{1}{2}+\frac{\sqrt{4n(n+1)+1-8b}}{2}$ is the unique non-negative solution to the equation
$$	
(s+1)s/2=(n+1)n/2 -b,
$$
we conclude that
$$
p=n-\left\lfloor -\frac{1}{2}+\frac{\sqrt{4n(n+1)+1-8b}}{2}\right\rfloor.
$$
Looking at the matrix $M$, we deduce that the index $q$, (i.e., the column index of $x_px_q$ in $M$) is given by
\begin{align*}
q=&b-\sum_{\ell=1}^{p-1}(n+1-\ell) +(p-1)\\
=& b-(p-1)(n+1)+\frac{p(p-1)}{2}+(p-1)\\
=&b+ \frac{(p-1)(-2-2n+p+2)}{2}=b+\frac{(p-1)(p-2n)}{2}.
\end{align*}
The claim follows.
\end{proof}	
\begin{proof}[Proof of \Cref{lemma:p and q squarefree}]
As in the proof of \Cref{p and q} it is easy to see that, if $x_px_q$ (with $p<q$) is the $b$-th largest squarefree degree $2$ monomial, then 
$$
n-p=\max\{s\in \NN~:~\sum_{\ell=1}^s\ell\leq \binom{n}{2}-b\}+1.
$$
Since $s=-\frac{1}{2}+\frac{\sqrt{4n(n-1)-8b+1}}{2}$ is the  unique  non-negative solution to the equation
$$
(s+1)s/2=n(n-1)/2-b,
$$
we infer that $p= n-1+\left\lfloor\frac{1}{2}-\frac{\sqrt{4n(n-1)-8b+1}}{2}\right\rfloor$. As $q=b-\sum_{\ell=1}^{p-1}(n-\ell)+p$, the claim follows from a straight forward computation. 
\end{proof}
\end{document}